\DeclareMathAlphabet\mathbfcal{OMS}{cmsy}{b}{n}
\newcommand{\bk}{\Bbbk}
\newcommand{\Z}{\mathbb{Z}}
\newcommand{\C}{\mathbb{C}}
\newcommand{\Gm}{\mathbb{G}_{\mathrm{m}}}
\newcommand{\prodrat}{\sideset{}{^{\mathrm{rat}}}\prod}
\newcommand{\smallprodrat}{\prod^{\mathrm{rat}}}
\newcommand{\fg}{\mathfrak{g}}
\newcommand{\fb}{\mathfrak{b}}
\newcommand{\fn}{\mathfrak{n}}
\newcommand{\fp}{\mathfrak{p}}
\newcommand{\weyl}{\mathsf{M}}
\newcommand{\coweyl}{\mathsf{N}}
\newcommand{\tilt}{\mathsf{T}}
\newcommand{\St}{\mathrm{St}}
\newcommand{\cN}{\mathcal{N}}
\newcommand{\tcN}{{\widetilde{\mathcal{N}}}}
\newcommand{\bX}{\mathbf{X}}
\newcommand{\bXp}{\mathbf{X}^+}
\newcommand{\ext}{{\mathrm{ext}}}
\newcommand{\bXpp}{\bX^{+,\mathrm{reg}}}
\newcommand{\dom}{\mathsf{dom}}
\newcommand{\Waff}{W_{\mathrm{aff}}}
\newcommand{\cF}{\mathcal{F}}
\newcommand{\cG}{\mathcal{G}}
\newcommand{\cO}{\mathcal{O}}
\newcommand{\cS}{\mathcal{S}}
\newcommand{\cI}{\mathcal{I}}
\newcommand{\tnabla}{\widetilde{\nabla}}
\newcommand{\tDelta}{\widetilde{\Delta}}
\newcommand{\ocA}{\overline{\mathcal{A}}}
\newcommand{\onabla}{\overline{\nabla}}
\newcommand{\oDelta}{\overline{\Delta}}
\newcommand{\bS}{\mathbf{\Sigma}}
\newcommand{\fA}{\mathfrak{A}}
\newcommand{\fC}{\mathfrak{C}}
\newcommand{\cC}{\mathcal{C}}
\newcommand{\lmod}{\text{-}\mathsf{mod}}
\newcommand{\ldgmod}{\text{-}\mathsf{dgmod}}
\newcommand{\Rep}{\mathsf{Rep}}
\newcommand{\Coh}{\mathsf{Coh}}
\newcommand{\QCoh}{\mathsf{QCoh}}
\newcommand{\Tilt}{\mathsf{Tilt}}
\newcommand{\fD}{\mathfrak{D}}
\newcommand{\cX}{\mathcal{X}}
\newcommand{\exto}{{\mathrm{ext},\oplus}}
\newcommand{\tri}{{\mathrm{tri}}}
\newcommand{\trio}{{\mathrm{tri},\oplus}}
\newcommand{\lgen}{\langle \!\langle}
\newcommand{\rgen}{\rangle \!\rangle}
\newcommand{\uast}{\mathbin{\underline{*}}}
\newcommand{\fS}{\mathfrak{S}}
\newcommand{\Db}{D^{\mathrm{b}}}
\DeclareMathOperator{\Hom}{Hom}
\DeclareMathOperator{\Ext}{Ext}
\DeclareMathOperator{\End}{End}
\DeclareMathOperator{\Sym}{Sym}
\DeclareMathOperator{\Ind}{Ind}
\DeclareMathOperator{\Res}{Res}
\DeclareMathOperator{\pr}{pr}
\DeclareMathOperator{\supp}{supp}
\DeclareMathOperator{\im}{im}
\DeclareMathOperator{\Spec}{Spec}
\newcommand{\fgen}{{\mathrm{fg}}}
\newcommand{\id}{\mathrm{id}}
\newcommand{\simto}{\xrightarrow{\sim}}
\newcommand{\la}{\langle}
\newcommand{\ra}{\rangle}
\newcommand{\lb}{\mathord{\lbag}}
\newcommand{\rb}{\mathord{\rbag}}
\newcommand{\ol}[1]{\overline{#1}}
\newcommand{\mbf}[1]{\mathbf{#1}}
\newcommand{\il}{\iota^{\mathrm{L}}}
\newcommand{\ir}{\iota^{\mathrm{R}}}
\newcommand{\pil}{\Pi^{\mathrm{L}}}
\newcommand{\pir}{\Pi^{\mathrm{R}}}
\newcommand{\Bru}{{\mathrm{Bru}}}
\newcommand{\fE}{\mathfrak{E}}
\newcommand{\tfL}{\widetilde{\mathfrak{L}}}
\newcommand{\fL}{\mathfrak{L}}
\newcommand{\tfE}{\widetilde{\mathfrak{E}}}
\def\lotimes{\@ifnextchar_{\@lotimessub}{\@lotimesnosub}}
\def\@lotimessub_#1{\mathchoice{\mathbin{\mathop{\otimes}^L}_{#1}}%
  {\otimes^L_{#1}}{\otimes^L_{#1}}{\otimes^L_{#1}}}
\def\@lotimesnosub{\mathbin{\mathop{\otimes}^L}}
\numberwithin{equation}{section}
\newtheorem{thm}{Theorem}[section]
\newtheorem{lem}[thm]{Lemma}
\newtheorem{prop}[thm]{Proposition}
\newtheorem{cor}[thm]{Corollary}
\newtheorem{conj}[thm]{Conjecture}
\theoremstyle{definition}
\newtheorem{defn}[thm]{Definition}
\theoremstyle{remark}
\newtheorem{rmk}[thm]{Remark}
\title[Co-t-structures on derived categories of coherent sheaves]{Co-t-structures on derived categories of coherent sheaves and the cohomology of tilting modules}
 \author{Pramod N. Achar}
 \address{Department of Mathematics\\
   Louisiana State University\\
   Baton Rouge, LA 70803\\
   U.S.A.}
 \email{pramod@math.lsu.edu}
 \author{William Hardesty}
 \address{School of Mathematics and Statistics\\
   University of Sydney\\
   Camperdown, NSW 2006\\
   U.S.A.}
 \email{hardes11@gmail.com}
 \thanks{P.A. was supported by NSF Grant No.~DMS-1802241. W.H. was supported by the ARC Discovery Grant No.~DP170104318.}
 \dedicatory{Dedicated to the memory of Jim Humphreys}
\begin{document}

 \begin{abstract}
We construct a co-$t$-structure on the derived category of coherent sheaves on the nilpotent cone $\cN$ of a reductive group, as well as on the derived category of coherent sheaves
on any parabolic Springer resolution. 
These structures are employed to show that the push-forwards of the ``exotic parity objects'' (considered in \cite{achar-hardesty-riche}), along the (classical) Springer resolution, give indecomposable objects inside the 
coheart of the co-$t$-structure on $\cN$. 
 We also demonstrate how the various parabolic co-$t$-structures can be related by introducing an analogue to the usual translation functors. As an application, we give a proof of a scheme-theoretic formulation of the relative Humphreys conjecture on support varieties of tilting modules in type $A$ for $p>h$.

\end{abstract}
\maketitle

\section{Introduction}

Let $\mbf{G}$ be a connected reductive group over an algebraically closed field $\bk$ of characteristic $p > h$ (where $h$ is the Coxeter number for $\mbf{G}$), and let $\mbf{G}_{1}$ be its first Frobenius kernel.  This paper is concerned with the study of the $\mbf{G}_{1}$-cohomology of $\mbf{G}$-modules, denoted by $H^\bullet(\mbf{G}_{1},M)$.  Via the well-known identification of $H^\bullet(\mbf{G}_{1},\bk)$ with the coordinate ring of the nilpotent cone $\cN$ of the Frobenius twist of $\mbf{G}$ (see~\cite{aj,fp}), we see that $H^\bullet(\mbf{G}_{1},M)$ can be thought of as a coherent sheaf on $\cN$.

The \emph{relative Humphreys conjecture}~\cite{hum:cmr} describes the support of this coherent sheaf when $M$ is an indecomposable tilting $\mbf{G}$-module.  This conjecture has been proved for $\mbf{G} = \mathrm{GL}_n$ when $p > n$ by the second author~\cite{hardesty}, and for arbitrary reductive $\mbf{G}$ when $p \gg 0$ by the authors together with S.~Riche~\cite{achar-hardesty-riche}.  The latter paper drew inspiration from Bezrukavnikov's work on the quantum group version of the Humphreys conjecture~\cite{bez-tilting}, especially in its invocation of derived equivalences with constructible sheaves on affine flag varieties.

However, one key feature of Bezrukavnikov's work was absent from~\cite{achar-hardesty-riche}. In~\cite{bez-tilting}, Bezrukavnikov gave an intrinsic characterization of the (complexes of) coherent sheaves on $\cN$ that can arise as the cohomology of tilting modules for a quantum group: they are precisely the simple objects in the heart of the \emph{perverse-coherent $t$-structure} on $\cN$.  This characterization is false in the reductive group case considered in~\cite{achar-hardesty-riche}, and no alternative characterization was known at that time.

This paper was inspired by a desire to find such an intrinsic characterization.  Instead of using $t$-structures, we equip the derived category of (dg) coherent sheaves on $\cN$ with a \emph{co-$t$-structure}, and we show (see Lemma~\ref{lem:cohom-silting}) that the indecomposable objects in its coheart (called \emph{silting objects}) are precisely the sheaves arising as $\mbf{G}_{1}$-cohomology of indecomposable tilting $\mbf{G}$-modules.

But the main results have to do with the interaction between our co-$t$-structure and (modified) translation functors between different blocks of $\mbf{G}$-modules.  Of course, $H^\bullet(\mbf{G}_{1},M) = \Ext^\bullet_{\mbf{G}_{1}}(\bk,M)$ is zero unless $M$ lies in the (extended) principal block of $\mbf{G}$.  But if $M$ lies in some other (typically singular) block, one can instead study $\Ext^\bullet_{\mbf{G}_{1}}(\St_I^{\mbf{G}},M)$, where $\St_I^{\mbf{G}}$ is a suitable ``Steinberg-type module'' (see Section~\ref{sec:repG} for the definition).  These $\Ext$-groups still have the structure of a module over $\Ext^\bullet_{\mbf{G}_{1}}(\bk,\bk) \cong \bk[\cN]$, and hence give coherent sheaves on $\cN$.

The main technical result of the paper, Theorem~\ref{thm:pushforward-stein-commute}, implies that the module $\Ext^\bullet_{\mbf{G}_{1}}(\St_I^{\mbf{G}},M)$ again lies in the coheart of our co-$t$-structure.  The support of $\Ext^\bullet_{\mbf{G}_{1}}(\St_I^{\mbf{G}},M)$ is easily seen to be contained in the closure of the Richardson orbit corresponding to the set $I$ of simple reflections.  Thus, Theorem~\ref{thm:pushforward-stein-commute} implies that there is a large supply of silting objects supported on Richardson orbit closures.

As an application, in Section~\ref{sec:scheme-hum}, we give a new, geometric proof of the relative Humphreys conjecture for $\mathrm{GL}_n$, valid for $p > n$.  This proof hinges on the fact that every nilpotent orbit for $\mathrm{GL}_n$ is Richardson.

In fact, we obtain a stronger statement than what was known before: we show that if $M$ is an indecomposable tilting module in the principal block for $\mathrm{GL}_n$, then $H^\bullet(\mbf{G}_{1},M)$ is supported \emph{scheme-theoretically} (not just set-theoretically) on the orbit closure predicted by Humphreys.  We call this stronger statement the \emph{scheme-theoretic (relative) Humphreys conjecture}.  We expect that this scheme-theoretic version holds for any reductive group $\mbf{G}$, and we hope that the tools developed in this paper may be useful for proving it.

\subsection*{Contents}

Section~\ref{sec:background} contains preliminaries on co-$t$-structures and on (co-)quasi-exceptional sequences.  In Sections~\ref{sec:cotangent} and~\ref{sec:nilpotent}, we apply this machinery to the derived category of coherent sheaves on the cotangent bundle of a partial flag variety (denoted by $\tcN_I$), and on the nilpotent cone (denoted by $\cN$), respectively. These sections define the \emph{supportive co-$t$-structure}, which is a primary focus of this paper.  Next, in Section~\ref{sec:pushforward}, we introduce the setting of ``dg coherent sheaves'' on both $\tcN_I$ and $\cN$, and we show how to adapt some of the material from the preceding sections to this context.  (For additional background on dg coherent sheaves on affine schemes, see Appendix~\ref{app:dg}.)  Sections~\ref{sec:cotangent}--\ref{sec:pushforward} only assume that the characteristic $p$ of $\bk$ is ``pretty good''; it need not be larger than the Coxeter number.

Starting from Section~\ref{sec:repG}, we assume that $p > h$, and that the derived subgroup of $\mbf{G}$ is simply connected.  In Section~\ref{sec:repG}, we establish the relationship between the supportive co-$t$-structure and the $\mbf{G}_{1}$-cohomology of tilting modules.  The main result of this section relates this co-$t$-structure to the groups $\Ext^\bullet_{\mbf{G}_{1}}(\St_I^{\mbf{G}},M)$ discussed above.  Finally, Section~\ref{sec:scheme-hum} contains a precise formulation of the scheme-theoretic Humphreys conjecture, as well as its proof in the case of $\mathrm{GL}_n$.

\subsection*{Added in revision}
Since this paper first appeared in preprint form, the authors have obtained~\cite{ah2} a proof of the (set-theoretic) relative Humphreys conjecture in general, by an argument that makes crucial use of the co-$t$-structure machinery developed in this paper.  (However, the scheme-theoretic version proposed in Section~\ref{sec:scheme-hum} remains open outside of $\mathrm{GL}_n$.)

\subsection*{Acknowledgments}

We are grateful to Linyuan Liu for a careful reading of this paper, and to an anonymous referee for very extensive comments that have helped substantially improve the exposition.

\section{Background on co-\texorpdfstring{$t$}{t}-structures}
\label{sec:background}

In this section, we consider various homological algebra constructions involving \emph{co-$t$-structures} (the definition will be reviewed below).  Most of the results in this section have close analogues for $t$-structures, found in~\cite{bbd,bez-quasi} among other sources.  From Section~\ref{ss:tate} on, we will emphasize the parallels between the $t$-structure and co-$t$-structure situations by including both settings in the statements of most lemmas and propositions below.  However, we will usually give proofs only in the co-$t$-structure case.

\subsection{Definition and generalities}

We begin with some notation for subcategories of triangulated categories.  Given a set of objects $\cX$ in a triangulated category $\fD$, we define four full subcategories of $\fD$ as follows:
\[
\begin{array}{l@{\qquad}l}
&
\text{\it\dots is defined to be the smallest full additive sub-} \\
\text{\it Notation}  &\text{\it category of $\fD$ containing $\cX$ and closed under \dots} \\
\hline
\la \cX \ra_\ext & \text{extensions} \\
\la \cX \ra_\exto & \text{extensions and direct summands} \\
\la \cX \ra_\tri & \text{$[\pm1]$ and extensions} \\
\la \cX \ra_\trio & \text{$[\pm1]$, extensions, and direct summands}
\end{array}
\]
The latter two are triangulated categories.  We obviously have $\la \cX \ra_\ext \subset \la \cX \ra_\exto$ and $\la \cX \ra_\tri \subset \la \cX \ra_\trio$; in some situations, these containments are equalities.

We now recall the definition and some basic facts on co-$t$-structures (see \cite{jorg, koenig-yang} for an overview).

\begin{defn}\label{defn:cotstruc}
Let $\fD$ be a triangulated category.  A \emph{co-$t$-structure} on $\fD$ is a pair of full additive subcategories $(\fD_{\ge 0}, \fD_{\le 0})$ with the following properties:
\begin{enumerate}
\item Both $\fD_{\ge 0}$ and $\fD_{\le 0}$ are closed under direct summands.\label{ax:thick}
\item We have $\fD_{\ge 0}[-1] \subset \fD_{\ge 0}$ and $\fD_{\le 0}[1] \subset \fD_{\le 0}$.\label{ax:shift}
\item For $A \in \fD_{\ge 0}$ and $B \in \fD_{\le 0}[1]$, we have $\Hom(A,B) = 0$.\label{ax:hom}
\item For any $X \in \fD$, there exists a distinguished triangle $A \to X \to B \to$ with $A \in \fD_{\ge 0}$ and $B \in \fD_{\le 0}[1]$.\label{ax:dt}
\end{enumerate}
(These axioms imply that $\fD_{\ge 0}$ and $\fD_{\le 0}$ are automatically closed under extensions.)
A co-$t$-structure $(\fD_{\ge 0}, \fD_{\le 0})$ is said to be \emph{bounded} if
\[
\bigcup_{n \in \Z} \fD_{\ge 0}[n] = \bigcup_{n \in \Z} \fD_{\le 0}[n] = \fD.
\]
The additive subcategory $\fC = \fD_{\ge 0} \cap \fD_{\le 0}$ is called the \emph{coheart} of the co-$t$-structure. Objects of $\fC$ are called \emph{silting objects}.  A \emph{silting generator} is a silting object $T$ with the property that every object in $\fC$ is a direct sum of direct summands of $T$.
\end{defn}


\begin{rmk}
Our definition of the term \emph{silting} is not consistent with the co-$t$-structure literature, and the difference in usage is similar to the difference in the usage of the word \emph{tilting} for algebraic groups or for finite-dimensional algebras.  In most of the literature, a \emph{silting object} is defined to be an object that generates a silting subcategory (see Definition~\ref{defn:silting-cat} below) under direct sums and direct summands.  
\end{rmk}

\begin{defn}
If $\fD$ and $\fD'$ are two triangulated categories equipped with co-$t$-struc\-tures with cohearts
$\fC \subset \fD$ and $\fC' \subset \fD'$, then we say that a triangulated functor 
$F:\fD \rightarrow \fD'$ is \emph{co-$t$-exact} if $F(\fC) \subset \fC'$ (i.e. $F$ preserves the cohearts). 
\end{defn}

Conversely, one can intrinsically characterize subcategories arising as cohearts of co-$t$-structures in the following way.

\begin{defn}\label{defn:silting-cat}
A full additive subcategory $\fS \subset \fD$ is a \emph{silting subcategory} if it satisfies the following properties:
\begin{enumerate}
\item $\fS$ is closed under direct summands.\label{it:silting-karoubi}
\item For any two objects $S$, $S' \in \fS$, we have $\Hom(S,S'[k]) = 0$ for all $k > 0$.\label{it:silting-ext}
\item $\fD = \la \fS\ra_\trio$.\label{it:silting-gen}
\end{enumerate}
\end{defn}

For a proof of the following proposition, see~\cite[Corollary~5.9]{hvvs:abcc}.
 
\begin{prop}\label{prop:silt_co-t-structure}
Let $\fD$ be a triangulated category.  A full subcategory $\fS \subset \fD$ is a silting subcategory if and only if it is the coheart of a bounded co-$t$-structure on $\fD$.  In this case, the co-$t$-structure is uniquely determined: it is given by
\[
\fD_{\geq 0} = \la \fS[-k] : k \geq 0 \ra_\exto, \qquad \fD_{\leq 0} = \la \fS[k] : k \geq 0 \ra_\exto. 
\]
\end{prop}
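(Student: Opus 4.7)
The plan is to establish the two implications separately, with the uniqueness assertion coming out of the analysis. The formulas for $\fD_{\ge 0}$ and $\fD_{\le 0}$ serve as the bridge between the two viewpoints.

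For the easy direction (bounded co-$t$-structure $\Rightarrow$ silting coheart), set $\fC = \fD_{\ge 0} \cap \fD_{\le 0}$. Closure of $\fC$ under summands is immediate from axiom (\ref{ax:thick}). The Ext-vanishing $\Hom(S, S'[k]) = 0$ for $k > 0$ falls out of axioms (\ref{ax:shift}) and (\ref{ax:hom}): iterating (\ref{ax:shift}) gives $\fD_{\le 0}[k] \subset \fD_{\le 0}[1]$ for $k \ge 1$, so $S'[k] \in \fD_{\le 0}[1]$, and (\ref{ax:hom}) kills the Hom. The generation property $\fD = \la \fC \ra_\trio$ is the only nonformal point. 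Applying axiom (\ref{ax:dt}) to a shifted co-$t$-structure yields, for each $n$, a triangle $\tau_{\ge n+1} X \to X \to \tau_{\le n} X$. For $X \in \fD_{\ge 0}$ at $n = 0$, both $X$ and $\tau_{\ge 1} X$ lie in $\fD_{\ge 0}$, so the cone $\tau_{\le 0} X$ is forced into $\fD_{\ge 0}$ by closure under extensions, hence lies in $\fC$. Iterating and invoking boundedness to terminate the process exhibits any object as an iterated cone (plus summands) of shifted coheart objects.

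For the reverse direction, given a silting $\fS$, define $\fD_{\ge 0}$ and $\fD_{\le 0}$ by the formulas. Axioms (\ref{ax:thick}) and (\ref{ax:shift}) are built into the shape of $\la \cdot \ra_\exto$. Axiom (\ref{ax:hom}) reduces, by bilinearity and the closure of $\la \cdot \ra_\exto$ under summands and extensions, to vanishing on generators: $\Hom(S[-k], S'[l]) = \Hom(S, S'[k+l]) = 0$ for $k \ge 0$ and $l \ge 1$, which is precisely condition (\ref{it:silting-ext}). The main technical step is axiom (\ref{ax:dt}), the existence of approximation triangles. My approach is to induct on the complexity of a presentation of $X$ as obtained from shifts of $\fS$ via finitely many cones and summands. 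The base case $X = S[n]$ is trivial: if $n \le 0$ the triangle $X \to X \to 0$ works, and if $n \ge 1$ the triangle $0 \to X \to X$ works. The inductive step combines approximation triangles of the outer terms of a distinguished triangle via the octahedral axiom.

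The main obstacle is the splicing in the inductive step: because co-$t$-truncations are not functorial, one must choose maps between two approximation triangles and then verify via octahedra that the spliced terms really land in $\fD_{\ge 0}$ and $\fD_{\le 0}[1]$. The Hom-vanishing (\ref{ax:hom}), already verified at this point, is precisely what makes such lifts available and unobstructed. Once (\ref{ax:dt}) is in hand, boundedness is automatic because every generator (and hence every object) lies in $\fD_{\ge 0}[n] \cap \fD_{\le 0}[-m]$ for some $m, n$. Uniqueness then comes for free: for any bounded co-$t$-structure with coheart $\fC$, one inclusion $\la \fC[-k] : k \ge 0 \ra_\exto \subset \fD_{\ge 0}$ holds by closure, and the reverse inclusion is the iterated truncation argument of the forward direction applied to $X \in \fD_{\ge 0}$; the case of $\fD_{\le 0}$ is symmetric.
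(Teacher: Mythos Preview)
The paper does not supply its own proof of this proposition; it simply cites~\cite{hvvs:abcc}. Your outline follows the standard route taken in that literature (and in Bondarko's construction of weight structures), so there is no ``paper's proof'' to contrast with.

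That said, there is one genuine gap in the reverse direction. You say you induct on a presentation of $X$ built from shifts of $\fS$ by cones \emph{and summands}, but you only treat the cone case (the octahedral splicing, which you describe correctly). The summand case is not routine: if $Y = X \oplus X'$ admits an approximation triangle $A \to Y \to B$ with $A \in \fD_{\ge 0}$ and $B \in \fD_{\le 0}[1]$, there is no immediate way to produce one for $X$. Composing $A \to Y \to X$ and taking the cone gives a triangle $A \to X \to C$, but the octahedron only yields a triangle $B \to C \to X'[1]$, and $X'[1]$ is not known to lie in $\fD_{\le 0}[1]$. What is actually needed is the lemma that $\fD_{\ge 0} * (\fD_{\le 0}[1])$ is closed under direct summands whenever $\Hom(\fD_{\ge 0}, \fD_{\le 0}[1]) = 0$ and both sides are summand-closed; this is true but requires a separate argument (one lifts the idempotent on $Y$ to endomorphisms of $A$ and $B$ and analyzes the resulting decomposition). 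Once that lemma is in hand your scheme goes through: the octahedral argument covers $\la \fS \ra_\tri$, every object of $\la \fS \ra_\trio$ is a summand of an object of $\la \fS \ra_\tri$, and the lemma passes approximation triangles to summands.
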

\begin{rmk}\label{rmk:natural-co-t-structure}
If $\fD = \Db\fA$ where $\fA$ is a highest weight category, then 
the subcategory $\Tilt(\fA) \subset \fA$ of tilting objects is a silting subcategory. 
The corresponding co-$t$-structure on $\fD$ will be called the \emph{natural co-$t$-structure}. 
(An alternative description of the co-$t$-structure in terms of standard and co-standard objects will be given in Proposition~\ref{prop:preexcep-struc}.)
\end{rmk}

The following proposition describes parallel ways of constructing $t$- and co-$t$-structures in a category generated by a single object.

\begin{prop}\label{prop:single-gen}
Let $\bk$ be a field, and let $\fD$ be a $\bk$-linear triangulated category.  Suppose there is an object $A$ that generates $\fD$ as a triangulated category, and assume that $\End(A) \cong \bk$. Let
\[
\fD(\le 0) = \la A[i] : i \ge 0 \ra_\exto
\qquad\text{and}\qquad
\fD(\ge 0) = \la A[i] : i \le 0 \ra_\exto.
\]
Then:
\begin{enumerate}
\item If $\Hom(A,A[i]) = 0$ for $i < 0$, then $(\fD(\le 0), \fD(\ge 0))$ is a $t$-structure on $\fD$.  In fact, it is the unique $t$-structure whose heart contains $A$.  Moreover, the heart is a finite-length abelian category, and $A$ is the unique simple object.
\item If $\Hom(A,A[i]) = 0$ for $i > 0$, then $(\fD(\ge 0), \fD(\le 0))$ is a co-$t$-structure on $\fD$.  In fact, it is the unique co-$t$-structure whose coheart contains $A$. Moreover, if $\fD$ is Karoubian, then the coheart is a Krull--Schmidt category, and $A$ is the unique indecomposable object.
\end{enumerate}
\end{prop}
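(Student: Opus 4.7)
The plan is to focus on part (2); part (1) is the $t$-structure analogue and is handled by a parallel argument (or by reduction to the quasi-exceptional sequence formalism recalled later in the paper). For (2), the strategy is to exhibit the candidate coheart directly as a silting subcategory in the sense of Definition~\ref{defn:silting-cat}, and then invoke Proposition~\ref{prop:silt_co-t-structure} to produce the co-$t$-structure.

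Let $\fS$ denote the full subcategory of $\fD$ consisting of direct summands of finite direct sums of copies of $A$. I claim $\fS$ is a silting subcategory. Condition~(1) holds by construction, since a summand of a summand of $A^n$ is itself a summand of $A^n$. For condition~(2), given $S,S' \in \fS$ with $S$ a summand of $A^n$ and $S'$ a summand of $A^m$, the group $\Hom(S,S'[k])$ is a direct summand of $\Hom(A^n,A^m[k]) \cong \Hom(A,A[k])^{nm}$, which vanishes for $k>0$ by hypothesis. Condition~(3) is immediate from the hypothesis that $A$ generates $\fD$ as a triangulated category. Proposition~\ref{prop:silt_co-t-structure} now furnishes a bounded co-$t$-structure on $\fD$, and since every object of $\fS$ is a summand of some $A^n \in \la A\ra_\exto$, the explicit formulas $\la \fS[-k] : k \ge 0\ra_\exto$ and $\la \fS[k]:k\ge 0\ra_\exto$ coincide with $\fD(\ge 0)$ and $\fD(\le 0)$ respectively.

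For uniqueness, suppose $(\fD'_{\ge 0}, \fD'_{\le 0})$ is any co-$t$-structure whose coheart contains $A$. By axiom~\eqref{ax:thick} of Definition~\ref{defn:cotstruc}, the coheart contains $\fS$, and the explicit formulas in Proposition~\ref{prop:silt_co-t-structure} applied to this coheart immediately yield $\fD(\ge 0) \subset \fD'_{\ge 0}$ and $\fD(\le 0) \subset \fD'_{\le 0}$. For the reverse containments, take $X \in \fD'_{\ge 0}$ and form the truncation triangle $A' \to X \to B \to$ of the first co-$t$-structure, with $A' \in \fD(\ge 0)$ and $B \in \fD(\le 0)[1] \subset \fD'_{\le 0}[1]$. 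Axiom~\eqref{ax:hom} applied to $X \in \fD'_{\ge 0}$ and $B \in \fD'_{\le 0}[1]$ forces the map $X \to B$ to vanish, so the triangle splits and $X$ is a direct summand of $A'$, hence lies in $\fD(\ge 0)$ by axiom~\eqref{ax:thick}. A symmetric argument handles $\fD'_{\le 0}$.

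Finally, assume $\fD$ is Karoubian. Since $\End(A^n) \cong M_n(\bk)$ and any two idempotents of equal rank in $M_n(\bk)$ are conjugate, splitting an idempotent of rank $m$ produces a summand isomorphic to $A^m$; therefore every object of the coheart $\fS$ is isomorphic to $A^m$ for some $m \ge 0$, giving the Krull--Schmidt structure and uniqueness of $A$ as indecomposable. The main subtlety is the uniqueness assertion for the co-$t$-structure itself, and it is dispatched entirely by the splitting-of-truncation-triangles argument above, which is the standard way to compare two co-$t$-structures sharing objects in their cohearts.
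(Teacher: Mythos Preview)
Your proof is correct, but it takes a genuinely different route from the one the paper (implicitly) follows. The paper does not prove Proposition~\ref{prop:single-gen} directly; it defers to the graded analogue Proposition~\ref{prop:gr-single-gen}, whose proof of part~(2) verifies the co-$t$-structure axioms by hand. The key device there is a ``sorting lemma'': for $k>m$ one has $A[k]*A[m]\subset A[m]*A[k]$, which lets one rearrange any iterated extension into a form $\fD(\ge 0)*\fD(\le 0)[1]$, giving axiom~\eqref{ax:dt} directly. Uniqueness is then dispatched by the bare observation that $\fD(\ge 0)\subset\fD'_{\ge 0}$ and $\fD(\le 0)\subset\fD'_{\le 0}$ forces equality (the general fact you spell out with the splitting-of-truncation argument). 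The classification of indecomposables in the coheart is done by an induction on the number of $*$-factors, again using the sorting lemma.

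By contrast, you identify the coheart $\fS$ up front and invoke Proposition~\ref{prop:silt_co-t-structure} (which cites~\cite{hvvs:abcc}) to produce the co-$t$-structure in one stroke; your classification of indecomposables via idempotents in $M_n(\bk)$ is considerably slicker than the paper's inductive argument. The trade-off is that your proof is not self-contained (it rests on the cited characterization of silting subcategories), whereas the paper's sorting argument is elementary and its technique recurs later in the proof of Proposition~\ref{prop:preexcep-struc}. One small remark: your appeal to ``the explicit formulas in Proposition~\ref{prop:silt_co-t-structure} applied to this coheart'' for the forward containment in uniqueness presumes the second co-$t$-structure is bounded; it is cleaner to note directly that $\fD'_{\ge 0}$ contains $A$ and is closed under $[-1]$, extensions, and summands, which already gives $\fD(\ge 0)\subset\fD'_{\ge 0}$.
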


We will not prove this proposition; instead, we will prove a ``graded'' variant of it later on (see Proposition~\ref{prop:gr-single-gen}).  The proof given there is easily adapted to prove Proposition~\ref{prop:single-gen}.  For the $t$-structure part of this statement, see~\cite[Lemma~3]{bez-quasi}.

\begin{lem}\label{lem:krull-schmidt}
Let $\bk$ be a field, and let $\fD$ be a $\bk$-linear triangulated category equipped with a co-$t$-structure $(\fD_{\ge 0}, \fD_{\le 0})$.  Suppose the following conditions hold:
\begin{enumerate}
\item The category $\fD$ is Karoubian.
\item For $X \in \fD_{\ge 0}$ and $Y \in \fD_{\le 0}$, the $\bk$-vector space $\Hom(X,Y)$ is finite-dimensional.\label{it:ks-findim}
\end{enumerate}
Then the coheart $\fC = \fD_{\ge 0} \cap \fD_{\le 0}$ is a Krull--Schmidt category.
\end{lem}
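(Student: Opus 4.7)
The plan is to reduce the statement to a well-known criterion: an additive Karoubian $\bk$-linear category all of whose endomorphism rings are semiperfect (for instance, finite-dimensional over $\bk$) is automatically Krull--Schmidt (see e.g.\ Krause's survey on Krull--Schmidt categories, or Chen--Ye). So the task becomes checking that $\fC$ fits into this framework.

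First, I would observe that $\fC = \fD_{\ge 0} \cap \fD_{\le 0}$ is an additive subcategory of $\fD$. Next I would verify that $\fC$ is Karoubian: by axiom~\eqref{ax:thick} of Definition~\ref{defn:cotstruc}, both $\fD_{\ge 0}$ and $\fD_{\le 0}$ are closed under direct summands, hence so is their intersection $\fC$; combined with the Karoubianness of $\fD$ (hypothesis~(1)), this means that any idempotent in $\End_\fC(C)$ splits inside $\fC$.

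Next I would analyze endomorphism rings. For any $C \in \fC$, we have simultaneously $C \in \fD_{\ge 0}$ and $C \in \fD_{\le 0}$, so by hypothesis~\eqref{it:ks-findim} the $\bk$-algebra $\End(C) = \Hom(C,C)$ is finite-dimensional. A finite-dimensional $\bk$-algebra is Artinian, hence semiperfect; equivalently, its idempotents lift modulo the Jacobson radical, and the quotient by the radical is a finite product of matrix algebras over division rings.

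Finally I would invoke the standard criterion to conclude: since $\fC$ is additive, Karoubian, and every endomorphism ring $\End(C)$ is semiperfect, every object $C \in \fC$ admits a finite decomposition into indecomposables whose endomorphism rings are local, and this decomposition is unique up to isomorphism and reordering. This is precisely the Krull--Schmidt property. There is no real obstacle in this argument; the only thing to be careful about is that the finite-dimensionality hypothesis is applied to $\Hom$-spaces with source in $\fD_{\ge 0}$ and target in $\fD_{\le 0}$ (not just between arbitrary objects of $\fD$), but for endomorphisms of silting objects this is automatic.
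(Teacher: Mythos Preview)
Your proposal is correct and follows essentially the same approach as the paper's proof: both show that $\fC$ is Karoubian (using closure under direct summands of $\fD_{\ge 0}$ and $\fD_{\le 0}$), observe that $\Hom$-spaces in $\fC$ are finite-dimensional by hypothesis~\eqref{it:ks-findim}, and then invoke the standard criterion (the paper cites \cite[Corollary~A.2]{cyz:addz}, which is the Chen--Ye--Zhang result you mention). Your version is slightly more explicit about the semiperfectness step, but the argument is the same.
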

\begin{proof}
The coheart of our co-$t$-structure is closed under direct summands, so it is also Karoubian.  Assumption~\eqref{it:ks-findim} implies that $\Hom$-spaces in $\fC$ are finite-di\-men\-sion\-al.  According to~\cite[Corollary~A.2]{cyz:addz}, $\fC$ is Krull--Schmidt.
\end{proof}

The following proposition involves the notion of ``recollement'' from~\cite{bbd}.  In the diagram shown below, $\iota$ is fully faithful, and $\Pi$ identifies $\fD''$ with the Verdier quotient $\fD/\iota(\fD')$.  The functors $\il$ and $\pil$ are the left adjoints of $\iota$ and $\Pi$, respectively, while $\ir$ and $\pir$ are their right adjoints.  These six functors are assumed to satisfy some additional axioms, spelled out in~\cite[\S1.4.3]{bbd}.  For the $t$-structure analogue of the statement below, see~\cite[Th\'eor\`eme~1.4.10]{bbd}.

\begin{prop}\label{prop:recolle-cotstruc}
Let $\fD'$, $\fD$, and $\fD''$ be three triangulated categories, and suppose we have a recollement diagram
\[
\begin{tikzcd}
\fD' \ar[r, "\iota" description] &
\fD \ar[r, "\Pi" description] \ar[l, bend right, "\il"'] \ar[l, bend left, "\ir" ] &
\fD''. \ar[l, bend right, "\pil"'] \ar[l, bend left, "\pir" ]
\end{tikzcd}
\]
Assume that $\fD'$ is equipped with a co-$t$-structure $(\fD'_{\ge 0}, \fD'_{\le 0})$, and that $\fD''$ is equipped with a co-$t$-structure $(\fD''_{\ge 0}, \fD''_{\le 0})$.  Then the categories 
\begin{align*}
\fD_{\ge 0} &= \{ X \mid \il(X) \in \fD'_{\ge 0}, \Pi(X) \in \fD''_{\ge 0} \}, \\
\fD_{\le 0} &= \{ X \mid \ir(X) \in \fD'_{\le 0}, \Pi(X) \in \fD''_{\le 0} \}
\end{align*}
constitute a co-$t$-structure on $\fD$.
\end{prop}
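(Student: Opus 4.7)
The plan is to verify the four axioms of Definition~\ref{defn:cotstruc} for the pair $(\fD_{\ge 0}, \fD_{\le 0})$, using throughout the standard recollement identities
\[
\il\iota \simeq \ir\iota \simeq \id, \quad \Pi\pil \simeq \Pi\pir \simeq \id, \quad \Pi\iota = 0, \quad \il\pil = 0, \quad \ir\pir = 0,
\]
together with the canonical functorial triangles $\pil\Pi(X) \to X \to \iota\il(X) \to$ and $\iota\ir(X) \to X \to \pir\Pi(X) \to$. Axioms (1) and (2) are immediate: the defining conditions on $\fD_{\ge 0}$ and $\fD_{\le 0}$ are stable under summands and under the appropriate shifts because the analogous properties hold in $\fD'$ and $\fD''$ and the six recollement functors are all additive and triangulated.

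For axiom (3), given $A \in \fD_{\ge 0}$ and $B \in \fD_{\le 0}[1]$, I would apply $\Hom(A, -)$ to the triangle $\iota\ir(B) \to B \to \pir\Pi(B) \to$ and use the adjunctions $(\il, \iota)$ and $(\Pi, \pir)$ to rewrite the two outer $\Hom$-spaces as $\Hom(\il(A), \ir(B))$ and $\Hom(\Pi(A), \Pi(B))$. Both vanish by axiom (3) in $\fD'$ and $\fD''$, since $\ir(B) \in \fD'_{\le 0}[1]$ and $\Pi(B) \in \fD''_{\le 0}[1]$.

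The main step is axiom (4), which I would establish by a two-step truncation. Given $X \in \fD$, first invoke axiom (4) in $\fD''$ to choose a triangle $Y'' \to \Pi(X) \to Z'' \to$ with $Y'' \in \fD''_{\ge 0}$ and $Z'' \in \fD''_{\le 0}[1]$, and let $X_1$ be the fiber of the composite $X \to \pir\Pi(X) \to \pir Z''$; applying $\Pi$ to $X_1 \to X \to \pir Z'' \to$ shows $\Pi(X_1) \simeq Y''$. Next, invoke axiom (4) in $\fD'$ on $\il(X_1)$ to obtain $Y' \to \il(X_1) \to Z' \to$, and let $A$ be the fiber of the composite $X_1 \to \iota\il(X_1) \to \iota Z'$. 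Applying $\Pi$ (using $\Pi\iota = 0$) and $\il$ (using $\il\iota \simeq \id$) to the triangle $A \to X_1 \to \iota Z' \to$ yields $\Pi(A) \simeq Y'' \in \fD''_{\ge 0}$ and $\il(A) \simeq Y' \in \fD'_{\ge 0}$, so $A \in \fD_{\ge 0}$. The octahedral axiom applied to the composite $A \to X_1 \to X$ then produces a triangle $A \to X \to B \to$ together with $\iota Z' \to B \to \pir Z'' \to$; applying $\Pi$ (using $\Pi\iota = 0$) and $\ir$ (using $\ir\pir = 0$ and $\ir\iota \simeq \id$) gives $\Pi(B) \simeq Z''$ and $\ir(B) \simeq Z'$, so $B \in \fD_{\le 0}[1]$, as required.

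The hard part is the asymmetry in the definitions of $\fD_{\ge 0}$ (via $\il$) versus $\fD_{\le 0}$ (via $\ir$), which forces the truncation to be performed in a specific order and requires one to use precisely those recollement vanishings ($\Pi\iota = 0$, $\il\pil = 0$, $\ir\pir = 0$) that are built into a BBD setup; their adjoint variants (such as $\il\pir = 0$) need not hold in general, so careful octahedral bookkeeping is the main technical ingredient beyond the two-step recipe sketched above.
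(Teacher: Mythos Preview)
Your proof is correct. The paper does not actually give its own argument for this proposition; it simply refers the reader to \cite[\S8.2]{bondarko} (where co-$t$-structures are called \emph{weight structures}). Your direct verification of the four axioms of Definition~\ref{defn:cotstruc}---in particular the two-step truncation for axiom~\eqref{ax:dt}, first lifting a truncation of $\Pi(X)$ via $\pir$ and then a truncation of $\il(X_1)$ via $\iota$, followed by the octahedral axiom---is the standard argument one finds in Bondarko's paper, and your bookkeeping of which recollement identities ($\Pi\iota = 0$, $\ir\pir = 0$, $\il\iota \simeq \ir\iota \simeq \id$, $\Pi\pir \simeq \id$) are actually used is accurate. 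Your closing remark that variants such as $\il\pir = 0$ need not hold (so that the order of the two truncation steps matters) is also correct: the cone of the canonical map $\pil \to \pir$ is precisely $\iota\il\pir$, which is nonzero in general.
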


For a proof, see~\cite[\S8.2]{bondarko}.  (In~\cite{bondarko}, co-$t$-structures are called \emph{weight structures}.)  The new co-$t$-structure on $\fD$ given by this proposition is said to be obtained from those on $\fD'$ and $\fD''$ by \emph{gluing} or \emph{recollement}.  Note that the functors $\iota$ and $\Pi$ are co-$t$-exact.

\begin{lem}\label{lem:recolle-full}
In the setting of the recollement diagram of Proposition~\ref{prop:recolle-cotstruc}, if $X \in \fD_{\ge 0}$ and $Y \in \fD_{\le 0}$, then the map
\begin{equation}\label{eqn:recolle-full}
\Pi: \Hom(X,Y) \to \Hom(\Pi(X),\Pi(Y))
\end{equation}
is surjective.  As a consequence, if the coheart $\fC = \fD_{\ge 0} \cap \fD_{\le 0}$ is a Krull--Schmidt category, then $\Pi$ sends any indecomposable object in $\fC$ to either $0$ or an indecomposable object.
\end{lem}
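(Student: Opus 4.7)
The plan is to reduce the surjectivity statement to a $\Hom$-vanishing in the left-hand category $\fD'$, where it is supplied directly by axiom~(3) of its co-$t$-structure. Concretely, I would start from the standard recollement distinguished triangle $\iota \ir Y \to Y \to \pir \Pi Y \to \iota \ir Y[1]$ (the second map is the unit of the adjunction $\Pi \dashv \pir$). Applying $\Hom(X,-)$ yields an exact sequence
\[
\Hom(X, Y) \xrightarrow{\alpha} \Hom(X, \pir \Pi Y) \to \Hom(X, \iota \ir Y[1]).
\]
By the adjunction $\Pi \dashv \pir$, $\Hom(X, \pir \Pi Y) \cong \Hom(\Pi X, \Pi Y)$, and a short triangle-identity check identifies $\alpha$ under this isomorphism with the map $\Pi \colon \Hom(X,Y) \to \Hom(\Pi X, \Pi Y)$. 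Hence surjectivity of~\eqref{eqn:recolle-full} reduces to showing the third term vanishes.

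For that vanishing, I would use the other adjunction $\il \dashv \iota$ to rewrite
\[
\Hom(X, \iota \ir Y[1]) \cong \Hom_{\fD'}(\il X, \ir Y[1]).
\]
By the definition of the glued co-$t$-structure in Proposition~\ref{prop:recolle-cotstruc}, $\il X \in \fD'_{\geq 0}$ and $\ir Y \in \fD'_{\leq 0}$; so $\ir Y[1] \in \fD'_{\leq 0}[1]$, and the vanishing is immediate from axiom~(3) of Definition~\ref{defn:cotstruc} applied to $\fD'$.

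For the Krull--Schmidt consequence, I would take an indecomposable $T \in \fC$ and suppose $\Pi(T) = A \oplus B$ in $\fC''$. The projection onto $A$ is an idempotent $e \in \End(\Pi T)$, and since $T \in \fC \subset \fD_{\geq 0} \cap \fD_{\leq 0}$, the surjectivity established above lifts $e$ to some $f \in \End(T)$. The lift $f$ need not itself be idempotent, but because $\fC$ is Krull--Schmidt, $\End(T)$ is local, so at least one of $f$ and $\id_T - f$ is invertible. Applying $\Pi$ shows that either $e$ or $\id_{\Pi T} - e$ is invertible in $\End(\Pi T)$; combined with idempotence this forces $e = \id$ or $e = 0$, i.e.\ $B = 0$ or $A = 0$. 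The argument is largely formal: the only bookkeeping step that genuinely demands attention is the identification of $\alpha$ with $\Pi$ via the triangle identities of $(\Pi,\pir)$, after which the remaining pieces fit together mechanically.
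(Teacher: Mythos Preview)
Your proof is correct and follows essentially the same approach as the paper: the same recollement triangle $\iota\ir Y \to Y \to \pir\Pi Y \to$, the same adjunction-based vanishing of $\Hom(\il X, \ir Y[1])$ via the co-$t$-structure on $\fD'$, and the same identification of the middle term with $\Hom(\Pi X, \Pi Y)$. The only cosmetic difference is in the Krull--Schmidt consequence: the paper observes directly that the surjection $\End(T) \twoheadrightarrow \End(\Pi T)$ exhibits $\End(\Pi T)$ as a quotient of a local ring, hence local, whereas you lift idempotents and argue with invertibility of $f$ or $\id_T - f$; these are equivalent formal arguments.
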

\begin{proof}
The recollement formalism gives us a distinguished triangle $\iota\ir(Y) \to Y \to \pir\Pi(Y) \to$, and hence a long exact sequence
\begin{equation}\label{eqn:recolle-full-les}
\cdots \to \Hom(X,Y) \to \Hom(X, \pir\Pi(Y)) \to \Hom(X,\iota\ir(Y)[1]) \to \cdots.
\end{equation}
We have $\Hom(X,\iota\ir(Y)[1]) \cong \Hom(\il(X), \ir(Y)[1])$.  From Proposition~\ref{prop:recolle-cotstruc}, we have $\il(X) \in \fD'_{\ge 0}$ and $\ir(Y) \in \fD'_{\le 0}$.  From the definition of a co-$t$-structure, we have $\Hom(\il(X), \ir(Y)[1]) = 0$, so the first map in~\eqref{eqn:recolle-full-les} is surjective.  Via the isomorphism $\Hom(X,\pir\Pi(Y)) \cong \Hom(\Pi(X), \Pi(Y))$, we see that~\eqref{eqn:recolle-full} is also surjective.

As a special case, for any object $X \in \fC$, the map $\End(X) \to \End(\Pi(X))$ is surjective.  In particular, if $\End(X)$ is a local ring and $\Pi(X)$ is nonzero, then $\End(\Pi(X))$ is also a local ring (since any quotient of a local ring is local), and hence $\Pi(X)$ is indecomposable.  The last assertion then follows, since every indecomposable object in a Krull--Schmidt category has a local endomorphism ring.
\end{proof}

\begin{prop}\label{prop:indec-class}
In the setting of the recollement diagram of Proposition~\ref{prop:recolle-cotstruc}, let $\fC'$, $\fC$, and $\fC''$ denote the cohearts of the co-$t$-structures on $\fD'$, $\fD$ and $\fD''$, respectively.  Assume that all three cohearts are Krull--Schmidt categories.
\begin{enumerate}
\item If $T' \in \fC'$ is indecomposable, then $\iota(T') \in \fC$ is indecomposable.\label{it:iota-indec}
\item If $T'' \in \fC''$ is indecomposable, there is an indecomposable object $T \in \fC$, unique up to isomorphism, such that $\Pi(T) \cong T''$.\label{it:pi-indec}
\item Every indecomposable object in $\fC$ comes from parts~\eqref{it:iota-indec} or~\eqref{it:pi-indec} above.\label{it:all-indec}
\end{enumerate}
\end{prop}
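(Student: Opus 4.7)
My plan is to handle the three parts in order, using Lemma~\ref{lem:recolle-full} and the co-$t$-exactness of $\iota$ and $\Pi$ as the main tools.

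For~\eqref{it:iota-indec}, I would observe that since $\iota$ is fully faithful, $\End(\iota(T')) \cong \End(T')$ is a local ring (because $\fC'$ is Krull--Schmidt and $T'$ is indecomposable), so $\iota(T')$ is indecomposable; co-$t$-exactness of $\iota$ places $\iota(T')$ in $\fC$.

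Part~\eqref{it:pi-indec} is the main content, and I plan to produce the lift in two stages. First, I would take $X := \pir(T'')$, which lies in $\fD_{\le 0}$ because $\ir\pir = 0$ and $\Pi\pir = \id$, and apply the truncation axiom (Definition~\ref{defn:cotstruc}\eqref{ax:dt}) to get a triangle $A \to X \to B$ with $A \in \fD_{\ge 0}$ and $B \in \fD_{\le -1}$. Rotating to $B[-1] \to A \to X$ and using that $\fD_{\le 0}$ is closed under extensions (with $B[-1], X \in \fD_{\le 0}$) shows $A$ also lies in $\fD_{\le 0}$, so $A \in \fC$. Applying $\Pi$ and invoking the $\Hom$-vanishing axiom in $\fD''$ shows that $\Pi(A) \to T''$ is split epimorphic, so $T''$ is a direct summand of $\Pi(A)$. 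Second, I would decompose $A = \bigoplus_i A_i$ into indecomposables in the Krull--Schmidt category $\fC$; by Lemma~\ref{lem:recolle-full}, each $\Pi(A_i)$ is zero or indecomposable in $\fC''$, so Krull--Schmidt in $\fC''$ forces some $\Pi(A_i) \cong T''$, and I take $T := A_i$. For uniqueness, given two such lifts $T_1, T_2$, I would use the surjection of Lemma~\ref{lem:recolle-full} to lift an isomorphism $\Pi(T_1) \simto \Pi(T_2)$ and its inverse to maps $f, g$ between them; since $\End(T_i)$ is local and $gf, fg$ project to units in $\End(\Pi(T_i))$, both $gf$ and $fg$ are units in the respective local rings, and hence $f$ is an isomorphism.

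For~\eqref{it:all-indec}, given an indecomposable $T \in \fC$, I would split on whether $\Pi(T) = 0$. If not, Lemma~\ref{lem:recolle-full} makes $\Pi(T)$ indecomposable in $\fC''$, and $T$ is its lift from part~\eqref{it:pi-indec}. If $\Pi(T) = 0$, the recollement triangles collapse to isomorphisms $T \cong \iota \il(T) \cong \iota \ir(T)$; setting $T' := \il(T) \cong \ir(T)$, the memberships $\il(T) \in \fD'_{\ge 0}$ and $\ir(T) \in \fD'_{\le 0}$ (both forced by $T \in \fC$ and the definition of the glued co-$t$-structure) put $T' \in \fC'$, while indecomposability of $T'$ passes through the fully faithful $\iota$. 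The step most likely to require care is verifying $A \in \fD_{\le 0}$ in~\eqref{it:pi-indec} — that is, that the one-sided truncation of an object already in $\fD_{\le 0}$ stays in $\fD_{\le 0}$; once this two-sided conclusion is in hand, the remainder reduces to Krull--Schmidt bookkeeping and standard manipulations with local endomorphism rings.
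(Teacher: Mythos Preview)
Your proof is correct, and parts~\eqref{it:iota-indec}, \eqref{it:all-indec}, and the uniqueness argument in~\eqref{it:pi-indec} are essentially identical to the paper's. The one genuine difference is in how the lift $T$ is constructed in part~\eqref{it:pi-indec}.

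You start from $\pir(T'') \in \fD_{\le 0}$ and truncate directly in $\fD$, obtaining an object $A \in \fC$ for which $T''$ is merely a direct summand of $\Pi(A)$; you then extract $T$ by Krull--Schmidt. The paper instead starts from $\pil(T'')$, truncates the object $\ir\pil(T'') \in \fD'$ inside $\fD'$ (shifted by one), and uses the resulting piece to build a cone $\tilde T$ of a map $\iota(A) \to \pil(T'')$. This $\tilde T$ lies in $\fC$ and satisfies $\Pi(\tilde T) \cong T''$ on the nose, so $T$ is the unique indecomposable summand of $\tilde T$ not killed by $\Pi$. The paper's route thus yields a sharper intermediate object and, as a byproduct, exhibits $T$ as sitting between $\pil(T'')$ and $\pir(T'')$ (this is exploited in Remark~\ref{rmk:indec-class}). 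Your route is arguably more conceptually uniform---one truncation in the ambient category rather than a truncation in $\fD'$ followed by a cone construction---but it loses this extra structural information, and requires one additional appeal to Krull--Schmidt in $\fC''$ to isolate the correct summand. Both are perfectly valid.
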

\begin{proof}
Recall that in a Krull--Schmidt category, an object is indecomposable if and only if its endomorphism ring is a local ring.  Part~\eqref{it:iota-indec} is immediate from the fact that $\iota$ is fully faithful.

For part~\eqref{it:pi-indec}, consider the object $\ir\pil(T'') \in \fD'$.  By Definition~\ref{defn:cotstruc}, there
 exists a (noncanonical) distinguished triangle $A \to \ir\pil(T'') \to B \to $ with $A \in \fD'_{\ge 0}[-1]$ and $B \in \fD'_{\le 0}$.  Consider the composition $\iota(A) \to \iota\ir\pil(T'') \to \pil(T'')$.  Let $\tilde T$ be the cone of this map, so that we have a distinguished triangle
\[
\iota(A) \to \pil(T'') \to \tilde T \to.
\]
It is straightforward to see from the definitions that $\tilde T$ lies in $\fC$.  Moreover, since 
$\Pi(\tilde T) \cong T''$ is indecomposable, $\tilde T$ must have a unique indecomposable summand that is not killed by $\Pi$.  Let $T$ denote this summand.  This is an indecomposable object in $\fC$ satisfying $\Pi(T) \cong T''$.

Before proving the uniqueness of $T$, let us consider part~\eqref{it:all-indec}.  Let $X \in \fC$ be an indecomposable object.  If $\Pi(X) = 0$, then the recollement formalism implies that $X \cong \iota(T')$, where $T' = \il(X) \cong \ir(X)$, so we are in case~\eqref{it:iota-indec}.  If $\Pi(X) \ne 0$, then by Lemma~\ref{lem:recolle-full}, it is indecomposable.  Let $T'' = \Pi(X)$.

The uniqueness in part~\eqref{it:pi-indec} and the remainder of part~\eqref{it:all-indec} both come down to the following assertion: \textit{If $X \in \fC$ is indecomposable and $\Pi(X) \cong T''$, then $X \cong T$.}  Let us prove this claim.  Choose an isomorphism $\theta: \Pi(X) \simto T''$.  By Lemma~\ref{lem:recolle-full}, the maps
\[
\Hom(X,T) \to \Hom(\Pi(X),T'') 
\qquad\text{and}\qquad
\Hom(T,X) \to \Hom(T'',\Pi(X))
\]
are both surjective.  Therefore, there exist maps $\phi: X \to T$ and $\psi: T \to X$ such that $\Pi(\phi) = \theta$ and $\Pi(\psi) = \theta^{-1}$.  Note that $\phi \circ \psi$ is an element of the local ring $\End(T)$ whose image in $\End(T'')$ is the identity map.  It follows that $\phi \circ \psi$ is invertible.  The same reasoning shows that $\psi \circ \phi$ is invertible, and hence that $\phi$ and $\psi$ are themselves isomorphisms, as desired.
\end{proof}

\begin{rmk}\label{rmk:indec-class}
Let $T$ and $T''$ be as in Proposition~\ref{prop:indec-class}\eqref{it:pi-indec}.  By adjunction, there are natural maps $\pil(T'') \to T \to \pir(T'')$.  The recollement formalism implies that the composition of these maps is equal to the canonical map $\pil(T'') \to \pir(T'')$: see~\cite[\S1.4.6(b)]{bbd}.
\end{rmk}

\subsection{Categories with a Tate twist}\label{ss:tate-twist}
\label{ss:tate}

For the remainder of this section, we will work in the ``graded'' setting: we always assume that our triangulated category $\fD$ is equipped with an autoequivalence 
\[
\lb 1\rb: \fD \to \fD
\]
such that for an object $X \in \fD$, we have
\[
X\lb 1 \rb \cong X
\qquad\text{if and only if}\qquad X = 0.
\]
This functor is called the \emph{shift-of-grading functor} (or sometimes the \emph{Tate twist}). For any object $X \in \fD$ and any $n \in \Z$, we write $X\lb n\rb$ for the object obtained by applying the shift-of-grading functor $n$ times.

Given a set of objects $\cX$ in a triangulated category $\fD$ equipped with a shift-of-grading functor $\lb 1\rb$, we define four subcategories of $\fD$ as follows:
\[
\begin{array}{l@{\qquad}l}
&
\text{\it\dots is defined to be the smallest full subcategory} \\
\text{\it Notation}  &\text{\it of $\fD$ containing $\cX$ and closed under \dots} \\
\hline
\lgen \cX \rgen_\ext & \text{extensions and $\lb {\pm1}\rb$} \\
\lgen \cX \rgen_\exto & \text{extensions, $\lb {\pm1}\rb$, and direct summands} \\
\lgen \cX \rgen_\tri & \text{$[\pm1]$, extensions and $\lb {\pm1}\rb$} \\
\lgen \cX \rgen_\trio & \text{$[\pm1]$, extensions and $\lb {\pm1}\rb$, and direct summands}
\end{array}
\]
The latter two are triangulated categories.  We obviously have $\lgen \cX \rgen_\ext \subset \lgen \cX \rgen_\exto$ and $\lgen \cX \rgen_\tri \subset \lgen \cX \rgen_\trio$; in some situations, these containments are equalities.

We will also use a graded variant of the ``$*$'' operation from~\cite[\S1.3.9]{bbd}.  Given $\cX_1, \cX_2 \subset \fD$, we define
\[
\cX_1 \uast \cX_2
\]
to be the full subcategory of $\fD$ consisting of objects $X$ such that there is a distinguished triangle
\[
A_1\lb n_1 \rb \to X \to A_2\lb n_2\rb \to
\qquad\text{with $A_1 \in \cX_1$, $A_2 \in \cX_2$, and $n_1, n_2 \in \Z$.}
\]
The proof of~\cite[Proposition~1.3.10]{bbd} shows that $\uast$ is associative.  For each $n\geq 0$, we can set 
\[
\cX^{\uast n} = \underbrace{\cX \uast \cdots \uast \cX}_{\text{$n$ factors}},
\] 
where the $0$-fold $\uast$-power of $\cX$ is understood to consist of just the zero object.
Observe that if we let $\widehat{\cX} = \{ X[i] \mid X \in \cX,\ i \in \Z \}$, then
\[
\lgen \cX\rgen_\ext = \bigcup_{n \ge 0} \cX^{\uast n}
\qquad\text{and}\qquad
\lgen \cX\rgen_\tri = \bigcup_{n \ge 0} \widehat{\cX}^{\uast n}.
\]

The following statement is the graded analogue of Proposition~\ref{prop:single-gen}.

\begin{prop}\label{prop:gr-single-gen}
Let $\bk$ be a field, and let $\fD$ be a $\bk$-linear triangulated category equipped with a shift-of-grading functor $\lb 1\rb: \fD \to \fD$.  Suppose there is an object $A$ such that $\fD = \lgen A \rgen_\tri$, and assume that
\[
\Hom(A, A\lb n\rb) \cong
\begin{cases}
\bk & \text{if $n = 0$,} \\
0 & \text{otherwise.}
\end{cases}
\]
Let
\[
\fD(\le 0) = \lgen A[i] : i \ge 0 \rgen_\exto
\qquad\text{and}\qquad
\fD(\ge 0) = \lgen A[i] : i \le 0 \rgen_\exto.
\]
Then:
\begin{enumerate}
\item If $\Hom(A,A\lb n\rb[i]) = 0$ for $i < 0$, then $(\fD(\le 0), \fD(\ge 0))$ is a $t$-structure on $\fD$.  In fact, it is the unique $t$-structure stable under
$\lb {\pm 1} \rb$ whose heart contains $A$.  Moreover, the heart is a finite-length abelian category, and the simple objects are of the form $A\lb n\rb$.\label{it:single-t}
\item If $\Hom(A,A\lb n\rb[i]) = 0$ for $i > 0$, then $(\fD(\ge 0), \fD(\le 0))$ is a co-$t$-structure on $\fD$.  In fact, it is the unique co-$t$-structure 
 stable under $\lb {\pm 1} \rb$ whose coheart contains $A$. Moreover, if $\fD$ is Karoubian, then the coheart is a Krull--Schmidt category, and the indecomposable objects are of the form $A\lb n\rb$.\label{it:single-cot}
\end{enumerate}
\end{prop}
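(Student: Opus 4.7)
The plan is to verify the axioms of Definition~\ref{defn:cotstruc} directly; part~(1) is dual, so I focus on part~(2). Axioms~(1) and~(2) are immediate: $\lgen-\rgen_\exto$ builds in closure under summands, and $i \le 0 \Rightarrow i - 1 \le 0$ (with its dual) gives shift stability. For axiom~(3), on generators one computes
\[
\Hom(A[i]\lb n\rb, A[j]\lb m\rb) \cong \Hom(A, A\lb m - n\rb[j - i])
\qquad\text{with } i \le 0,\ j \ge 1,
\]
so $j - i \ge 1$ and the Hom vanishes by hypothesis; this extends to all of $\fD(\ge 0) \times \fD(\le 0)[1]$ by long-exact-sequence induction on the $\uast$-length of a presentation, together with closure of the Hom-vanishing condition under summands.

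The main technical step is axiom~(4). Let $\fE$ denote the class of $X \in \fD$ admitting a truncation triangle $A' \to X \to B' \to$ with $A' \in \fD(\ge 0)$ and $B' \in \fD(\le 0)[1]$. Every generator $A[i]\lb n\rb$ lies in $\fE$: if $i \le 0$ take the trivial triangle $A[i]\lb n\rb \to A[i]\lb n\rb \to 0 \to$; if $i \ge 1$ take $0 \to A[i]\lb n\rb \to A[i]\lb n\rb \to$. The class $\fE$ is manifestly closed under grading shifts and cohomological shifts. For closure under extensions, given $X_1 \to X \to X_2 \to$ with truncation triangles $A'_i \to X_i \to B'_i \to$, the key vanishing $\Hom(A'_2, B'_1[1]) = 0$ from axiom~(3) combines with two applications of the octahedral axiom to produce a truncation triangle for $X$. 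Since $\fD = \lgen A \rgen_\tri$, this forces $\fE = \fD$.

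For the coheart and the remaining claims, the containment $\lgen A \rgen_\exto \subset \fC$ is immediate from closure properties of $\fC$. The reverse relies on the auxiliary characterization ``$X \in \fD(\ge 0)$ iff $\Hom(X, Y) = 0$ for all $Y \in \fD(\le 0)[1]$'', whose nontrivial direction applies a truncation triangle $X' \to X \to X'' \to$: the assumption forces $X \to X'' = 0$, splitting the triangle as $X \cong X' \oplus X''$, after which the projection $X \to X''$ realizes an embedding $\End(X'') \hookrightarrow \Hom(X, X'') = 0$, so $X'' = 0$ and $X \cong X' \in \fD(\ge 0)$. This characterization shows that $\lgen A \rgen_\exto$ satisfies the silting axioms of Definition~\ref{defn:silting-cat}, so by the graded variant of Proposition~\ref{prop:silt_co-t-structure} it is the coheart of a co-$t$-structure whose positive and negative halves coincide by explicit formula with ours; hence $\fC = \lgen A \rgen_\exto$, and uniqueness follows by the same comparison. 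Finally, $\Hom(A\lb n\rb, A\lb m\rb[1]) = 0$ forces any extension of grading shifts of $A$ to split, so $\la A\lb n\rb : n \in \Z \ra_\ext$ consists of finite direct sums $\bigoplus_j A\lb m_j\rb$; when $\fD$ is Karoubian, summands retain this form (idempotents in a direct product of copies of $\bk$ are coordinate-wise), with indecomposables precisely the $A\lb n\rb$. The principal obstacle in the whole argument is the octahedral construction in the second paragraph, which is a standard but delicate diagram chase.
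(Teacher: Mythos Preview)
Your argument is essentially correct, but it takes a different route from the paper's, and there is one small slip worth flagging.

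\textbf{Comparison with the paper.} For axiom~(4), the paper does not use an octahedral closure argument. Instead it proves the ``sorting'' inclusion $A[k]\uast A[m]\subset A[m]\uast A[k]$ whenever $k>m$ (the connecting map $A[m]\lb n_2\rb\to A[k+1]\lb n_1\rb$ vanishes, so the triangle splits), and then rewrites any $X\in A[k_1]\uast\cdots\uast A[k_j]$ with $k_1\le\cdots\le k_j$; cutting at the index where the $k_i$ pass from $\le 0$ to $>0$ gives the truncation triangle directly. Your octahedral argument is equally valid and is the standard ``recollement-style'' proof that the class admitting truncation triangles is extension-closed; it has the advantage of not using any special feature of the generating set beyond the $\Hom$-vanishing. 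For the coheart, the paper again proceeds by hand: it invokes Lemma~\ref{lem:krull-schmidt} for Krull--Schmidt and then, for a nonzero $X\in\fC$ written in $A[k_1]\uast\cdots\uast A[k_j]$ with the $k_i$ sorted, argues case by case ($k_j>0$, $k_1<0$, or all $k_i=0$) by induction on $j$. Your route via Proposition~\ref{prop:silt_co-t-structure} is cleaner: once you check that $\lgen A\rgen_\exto$ is a silting subcategory, that proposition hands you a co-$t$-structure whose two halves visibly coincide with $\fD(\ge 0)$ and $\fD(\le 0)$, forcing $\fC=\lgen A\rgen_\exto$. (Note that the silting axioms for $\lgen A\rgen_\exto$ follow directly from axiom~(3) of the co-$t$-structure and do not actually require your ``auxiliary characterization''; that characterization is, however, exactly what is needed for the uniqueness claim, as in the paper.)

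\textbf{The slip.} In your characterization argument, when $\Hom(X,X'')=0$ forces the map $X\to X''$ in the triangle $X'\to X\to X''\to$ to vanish, the resulting splitting is $X'\cong X\oplus X''[-1]$, not $X\cong X'\oplus X''$. This does not affect the conclusion: since $\fD(\ge 0)$ is closed under direct summands and $X'\in\fD(\ge 0)$, you get $X\in\fD(\ge 0)$ immediately, without needing to argue that $X''=0$.
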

\begin{proof}
\eqref{it:single-t}~The ungraded analogue of this claim is proved in~\cite[Lemma~3]{bez-quasi} (see~\cite[Proposition~A.1]{modular-LV} for a related argument), and that proof is easily adapted to the graded case.  More precisely, it is shown in {\it loc.~cit.} that the categories $\lgen A[i] : i \ge 0 \rgen_\ext$ and $\lgen A[i] : i \le 0 \rgen_\ext$ constitute a $t$-structure.  The subcategories defining a $t$-structure are automatically closed under direct summands (this follows from the existence of truncation functors), so in the setting of part~\eqref{it:single-t} of the proposition, we have $\lgen A[i] : i \ge 0 \rgen_\ext = \lgen A[i] : i \ge 0 \rgen_\exto$ and $\lgen A[i] : i \le 0 \rgen_\ext = \lgen A[i] : i \le 0 \rgen_\exto$.

\eqref{it:single-cot}~Axioms~\eqref{ax:thick} and~\eqref{ax:shift} from Definition~\ref{defn:cotstruc} are clear, and axiom~\eqref{ax:hom} follows from the assumption that $\Hom(A,A\lb n\rb[i]) = 0$ for $i > 0$.  Next, we claim that if $k > m$, then
\begin{equation}\label{eqn:star-sort}
A[k] \uast A[m] \subset A[m] \uast A[k].
\end{equation}
Indeed, the left-hand side consists of objects $X$ that fit into a distinguished triangle $A[k]\lb n_1\rb \to X \to A[m]\lb n_2\rb \to A[k+1]\lb n_1\rb$.  But since $k > m$, we have $k+1 > m$ as well, so our assumptions imply that the map $A[m]\lb n_2\rb \to A[k+1]\lb n_1\rb$ is zero, i.e., the triangle splits.  Thus, we have $X \cong A[k]\lb n_1\rb \oplus A[m]\lb n_2\rb \in A[m] \uast A[k]$.

For any $X \in \fD$, there exist integers $k_1, \ldots, k_j$ such that
\begin{equation}\label{eqn:star-gen}
X \in A[k_1] \uast \cdots \uast A[k_j].
\end{equation}
Using~\eqref{eqn:star-sort} repeatedly, we may assume that $k_1 \le k_2 \le \cdots \le k_j$.  Let $i$ be the unique subscript such that $k_i \le 0$ but $k_{i+1} > 0$ (here we permit $i = 0$ or $i = j$ if necessary).  Then
\begin{equation}\label{eqn:cotstruc-trunc}
X \in (A[k_1] \uast \cdots \uast A[k_i]) \uast (A[k_{i+1}] \uast \cdots \uast A[k_j]) \subset \fD(\ge 0) \uast (\fD(\le 0)[1]).
\end{equation}
We have proved axiom~\eqref{ax:dt}, and hence that $(\fD(\ge 0), \fD(\le 0))$ is a co-$t$-structure.

If there were another co-$t$-structure on $\fD$, say $(\fD'_{\ge 0}, \fD'_{\le 0})$, whose coheart contained $A$ and was stable under $\lb {\pm 1} \rb$, then we would clearly have $\fD(\ge 0) \subset \fD'_{\ge 0}$ and $\fD(\le 0) \subset \fD'_{\le 0}$.  This implies that the two co-$t$-structures coincide.  We have proved the uniqueness claim in the proposition.

From now on, we assume that $\fD$ is Karoubian.  Let $\fC = \fD(\ge 0) \cap \fD(\le 0)$ be the coheart of our co-$t$-structure.  We claim that $\fC$ is Krull--Schmidt.  It is enough to check condition~\eqref{it:ks-findim} from Lemma~\ref{lem:krull-schmidt}.  This condition follows easily from the observation that if $i \le 0 \le j$, then $\Hom(A[i], A[j]\lb n\rb)$ is finite-dimensional.

Note that $A\lb n\rb$ is indecomposable, because $\End(A) \cong \bk$ is a local ring.

Finally, let $X$ be a nonzero object in $\fC$.  It remains to show that $X$ is a direct sum of objects of the form $A\lb n\rb$.  Find an expression as in~\eqref{eqn:star-gen}, with $k_1 \le \cdots \le k_j$.  We proceed by induction on $j$. If $j = 1$, then $X \cong A[k_1]\lb n\rb$ for some $n \in \Z$.  Since $X \in \fC$, we must have $k_1 = 0$, and we are done.

From now on, suppose $j > 1$. Suppose first that $k_j > 0$.  Let $i$ be such that $k_i \le 0$ but $k_{i+1} > 0$, as in~\eqref{eqn:cotstruc-trunc}.  If $i = 0$, we would have $X \in \fD(\le 0)[1]$, a contradiction.  We therefore have $1 \le i < j$.  From~\eqref{eqn:cotstruc-trunc}, we get a distinguished triangle $X' \to X \to X'' \to$ with $X' \in \fD(\ge 0)$ and $X'' \in \fD(\le 0)[1]$.  The arrow $X \to X''$ must be $0$, so $X' \cong X \oplus X''[-1]$.  This tells us that $X''[-1] \in \fD(\ge 0)$, so $X''[-1]$ actually lies in the coheart.  The triangle $X''[-1] \to X' \to X \to$ shows that $X'$ lies in the coheart as well.  By induction, we know that $X' \in A[k_1] \uast \cdots \uast A[k_i]$ is a direct sum of objects of the form $A\lb n \rb$.  Since $X$ is a direct summand of $X'$, and since the coheart is Krull--Schmidt, our claim for $X$ holds.

Next, if $k_1 < 0$, we instead define $i$ to be such that $k_i < 0$ but $k_{i+1} \ge 0$, and then consider the decomposition~\eqref{eqn:cotstruc-trunc}.  The rest of the argument in this case is very similar to the preceding paragraph.

The remaining case is that in which $k_1 = \cdots = k_j = 0$.  In this case, we get a distinguished triangle $X' \to X \to A\lb m\rb \to$, where
\[
X' \in \underbrace{A \uast \cdots \uast A}_{\text{$j-1$ factors}}.
\]
We have $\Hom(A\lb m\rb, X'[1]) = 0$, so the triangle splits, and $X \cong X' \oplus A\lb m\rb$.  By induction, $X'$ is a direct sum of objects of the form $A\lb n\rb$, so we are done.
\end{proof}

\subsection{Pre-exceptional sets}
\label{ss:pre-excep}

We will now develop a vast generalization of Proposition~\ref{prop:gr-single-gen}.  Let $(S,\le)$ be a partially ordered set.  Assume that the partial order $\le$ admits a refinement to a total order $\le'$ such that $(S,\le')$ is isomorphic to a subset of $\Z_{\ge 0}$.  In particular, $S$ is either finite or countable, and 
\begin{equation}\label{eqn:lt-finite}
\text{the set $\{t \in S \mid t \le s \}$ is finite.}
\end{equation}

\begin{defn}\label{defn:excep}
Let $\fD$ be a triangulated category equipped with a Tate twist $\lb 1\rb$, and let $\{ \nabla_s \}_{s \in S}$ be a collection of objects in $\fD$ indexed by $S$. 
This collection is said to be a \emph{graded pre-exceptional set} if it satisfies the following axioms:
\begin{enumerate}
\item If $s \not\ge t$, then $\Hom(\nabla_s, \nabla_t \lb n \rb[i]) = 0$ for all $n, i \in \Z$.\label{it:exc-nabla-hom}
\item We have\label{it:exc-nabla-end}
\[
\Hom(\nabla_s, \nabla_s\lb n\rb) \cong
\begin{cases}
\bk & \text{if $n = 0$,} \\
0 & \text{otherwise.}
\end{cases}
\]
\item The collection $\{ \nabla_s\lb n\rb \}_{s \in S, n \in \Z}$ generates $\fD$ as a triangulated category.\label{it:exc-nabla-gen}
\end{enumerate}
Suppose $\{\nabla_s\}_{s \in S}$ is a graded pre-exceptional set.   Then:
\begin{enumerate}
\item[($4^+$)] If $\Hom(\nabla_s, \nabla_s\lb n\rb[i]) = 0$ for all $n \in \Z$ and all $i < 0$, then $\{\nabla_s\}$ is said to be a \emph{graded quasi-exceptional set}.
\item[($4^-$)] If $\Hom(\nabla_s, \nabla_s\lb n\rb[i]) = 0$ for all $n \in \Z$ and all $i > 0$, then $\{\nabla_s\}$ is said to be a \emph{graded co-quasi-exceptional set}.
\item[($4^\pm$)] If $\Hom(\nabla_s, \nabla_s\lb n\rb[i]) = 0$ for all $n \in \Z$ and all $i \ne 0$, then $\{\nabla_s\}$ is said to be a \emph{graded exceptional set}.
\end{enumerate}
\end{defn}

Exceptional and quasi-exceptional sets have been studied elsewhere in the literature, but we are not aware of any study of co-quasi-exceptional sets.  It turns out, however, that many of the basic lemmas about exceptional and quasi-exceptional sets can be proved using only the axioms for pre-exceptional sets, so they remain valid for co-quasi-exceptional sets as well.

\begin{rmk}
\begin{enumerate}
\item Of course, Definition~\ref{defn:excep} has an obvious ungraded analogue, as do the definitions and propositions below.  We leave it to the reader to formulate the precise statements.  
\item The definition of ``graded quasi-exceptional'' given above matches that in~\cite{bez-quasi}, but not that in~\cite{achar-perv}: the latter includes an extra condition that we will not impose.
\end{enumerate}
\end{rmk}

If $\fD$ is equipped with a graded pre-exceptional set $\{\nabla_s\}_{s \in S}$, then for any lower set $U \subset S$, we define
\[
\fD_U = \lgen \nabla_u : u \in U \rgen_\tri.
\]
(Recall that a subset $U \subset S$ is called a \emph{lower set} if $t \in U$ and $u \le t$ implies $u \in U$.)  For the special case $U = \{u \in S \mid u\le s\}$, we simply write $\fD_{\le s}$.  The categories $\fD_{< s}$ and $\fD_{\not\ge s}$ are defined similarly.

\begin{defn}\label{defn:dualizable}
Let $\{\nabla_s\}_{s \in S}$ be a graded pre-exceptional set.  It is said to be \emph{dualizable} if for each $s \in S$, there is an object $\Delta_s \in \fD$ and a morphism $\iota_s: \Delta_s \to \nabla_s$ such that the following two conditions hold:
\begin{enumerate}
\item The cone of $\iota_s: \Delta_s \to \nabla_s$ lies in $\fD_{<s}$.\label{it:dual-cone}
\item If $s > t$, then $\Hom(\Delta_s, \nabla_t\lb n\rb[i]) = 0$ for all $n, i \in \Z$.\label{it:dual-hom}
\end{enumerate}
\end{defn}

\begin{rmk}
According to~\cite[Proposition~3(b)]{bez-tilting}, every graded exceptional set is automatically dualizable.  (That statement assumes that $S$ is finite, but the same reasoning applies in our situation thanks to~\eqref{eqn:lt-finite}.)
\end{rmk}

\begin{lem}
Let $\{\nabla_s\}_{s \in S}$ be a dualizable graded pre-exceptional set.  For each $s \in S$, the pair $(\Delta_s, \iota_s: \Delta_s \to \nabla_s)$ is unique up to unique isomorphism.
\end{lem}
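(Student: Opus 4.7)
The plan is to carry out the standard obstruction-vanishing uniqueness argument. Suppose $(\Delta_s, \iota_s)$ and $(\Delta_s', \iota_s')$ are two pairs satisfying the conditions of Definition~\ref{defn:dualizable}. I want to produce a unique isomorphism $f: \Delta_s' \simto \Delta_s$ compatible with the two maps into $\nabla_s$.

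First I would establish the following vanishing: $\Hom(\Delta_s', Y) = 0$ for every $Y \in \fD_{<s}$, and likewise with $\Delta_s$ in place of $\Delta_s'$. By Definition~\ref{defn:dualizable}\eqref{it:dual-hom}, this is known when $Y = \nabla_u\lb n\rb[i]$ with $u<s$ and $n,i \in \Z$ arbitrary. Since $\fD_{<s} = \lgen \nabla_u : u < s \rgen_\tri$ is generated by such objects under $[\pm 1]$, $\lb\pm 1\rb$, and extensions, an induction using the long exact sequence obtained by applying $\Hom(\Delta_s', -)$ to a generating distinguished triangle promotes the vanishing to all of $\fD_{<s}$.

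Next I would apply this to the distinguished triangle $\Delta_s \xrightarrow{\iota_s} \nabla_s \to C \to$, where $C \in \fD_{<s}$ by Definition~\ref{defn:dualizable}\eqref{it:dual-cone}. Since $\Hom(\Delta_s', C)$ and $\Hom(\Delta_s', C[-1])$ both vanish, the induced sequence shows that composition with $\iota_s$ defines an isomorphism $\Hom(\Delta_s', \Delta_s) \simto \Hom(\Delta_s', \nabla_s)$. Thus $\iota_s'$ lifts to a unique morphism $f: \Delta_s' \to \Delta_s$ with $\iota_s \circ f = \iota_s'$. By symmetry, there is a unique $g: \Delta_s \to \Delta_s'$ with $\iota_s' \circ g = \iota_s$.

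Finally, the composition $f \circ g: \Delta_s \to \Delta_s$ satisfies $\iota_s \circ (f \circ g) = \iota_s$, so applying the same uniqueness statement with $\Delta_s' = \Delta_s$ forces $f \circ g = \id_{\Delta_s}$; similarly $g \circ f = \id_{\Delta_s'}$, so $f$ is the required unique isomorphism. There is no real obstacle in this argument; the only point requiring any care is to invoke the vanishing for both $C$ and $C[-1]$ so as to obtain both the existence and the uniqueness of the lift.
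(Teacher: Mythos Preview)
Your proposal is correct and follows essentially the same approach as the paper: establish $\Hom(\Delta_s',\fD_{<s}) = 0$ from Definition~\ref{defn:dualizable}\eqref{it:dual-hom}, then use this to lift uniquely through the triangle defining the cone. The paper abbreviates your lifting-and-inverse argument by invoking \cite[Proposition~1.1.9]{bbd}, which packages exactly the existence and uniqueness of the lift under the vanishing of $\Hom(\Delta_s, K')$ and $\Hom(\Delta_s, K'[-1])$ that you spell out by hand.
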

\begin{proof}
Suppose there is another pair $(\Delta'_s, \iota'_s: \Delta'_s \to \nabla_s)$ satisfying the conditions in Definition~\ref{defn:dualizable}.  Those conditions imply that if $X \in \fD_{<s}$, then $\Hom(\Delta_s, X) = \Hom(\Delta'_s, X) = 0$.

Let $K$ and $K'$ denote the cones of $\iota_s$ and $\iota'_s$, respectively.  By the observation above, we have
\[
\Hom(\Delta_s, K') = \Hom(\Delta_s, K'[-1]) = 0.
\]
By~\cite[Proposition~1.1.9]{bbd}, there is a unique map $f: \Delta_s \to \Delta'_s$ that makes the left-hand square in the diagram
\[
\begin{tikzcd}
\Delta_s \ar[d, "f"', dashed] \ar[r, "\iota_s"] &
  \nabla_s \ar[d, equal] \ar[r] &
  K \ar[r] & {} \\
\Delta'_s \ar[r, "\iota'_s"] &
  \nabla_s \ar[r] &
  K' \ar[r] & {}
\end{tikzcd}
\]
commute.  By swapping the roles of $\Delta_s$ and $\Delta'_s$, one can see that $f$ must be an isomorphism.
\end{proof} 

\begin{lem}\label{lem:pre-exc-basic}
Let $\{\nabla_s\}_{s \in S}$ be a dualizable graded pre-exceptional set, and let $\{\Delta_s\}_{s \in S}$ be the dual set.  Then:
\begin{enumerate}
\item If $s \ne t$, then $\Hom(\Delta_s, \nabla_t\lb n\rb[i]) = 0$ for all $n, i \in \Z$.\label{it:delta-nabla-orth}
\item If $s \not\le t$, then $\Hom(\Delta_s, \Delta_t \lb n \rb[i]) = 0$ for all $n, i \in \Z$.\label{it:exc-delta-hom}
\item For all $n, i \in \Z$, there are natural isomorphisms\label{it:exc-delta-end}
\[
\Hom(\Delta_s, \Delta_s\lb n\rb[i])
\cong \Hom(\Delta_s, \nabla_s\lb n\rb[i])
\cong \Hom(\nabla_s, \nabla_s\lb n\rb[i]).
\]
\item For any lower set $U \subset S$, we have $\fD_U = \lgen \Delta_u[i] : u \in U \rgen_\tri$.\label{it:exc-delta-gen}
\end{enumerate}
\end{lem}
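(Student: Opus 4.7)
The plan is to exploit repeatedly the distinguished triangle $\Delta_s \xrightarrow{\iota_s} \nabla_s \to K_s \to \Delta_s[1]$, where $K_s \in \fD_{<s}$, provided by Definition~\ref{defn:dualizable}. All four parts will follow from this triangle together with the vanishings already built into Definition~\ref{defn:excep} and Definition~\ref{defn:dualizable}, combined with a bootstrap argument for objects of $\fD_{<s}$.

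For part~\eqref{it:delta-nabla-orth}, I split into cases. If $s > t$, the vanishing is precisely Definition~\ref{defn:dualizable}\eqref{it:dual-hom}. Otherwise, $s \ne t$ and $s \not> t$, so $s \not\ge t$; apply $\Hom(-,\nabla_t\lb n\rb[i])$ to the triangle for $s$. The middle term $\Hom(\nabla_s,\nabla_t\lb n\rb[i])$ vanishes by Definition~\ref{defn:excep}\eqref{it:exc-nabla-hom}, and the terms involving $K_s$ vanish by a ``lower set'' argument: $K_s$ is built from shifts and twists of $\nabla_u$ with $u < s$, and for any such $u$ we have $u \le s$, so $u \ge t$ would force $s \ge u \ge t$, a contradiction. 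Hence $\Hom(\nabla_u,\nabla_t\lb n\rb[j]) = 0$ for all $j$, and the desired vanishing drops out of the long exact sequence.

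For part~\eqref{it:exc-delta-hom}, apply $\Hom(\Delta_s,-)$ to the triangle $\Delta_t \to \nabla_t \to K_t \to$. The middle term $\Hom(\Delta_s, \nabla_t\lb n\rb[i])$ vanishes by part~\eqref{it:delta-nabla-orth} (since $s \not\le t$ implies $s \ne t$), while for $K_t$ written in terms of $\nabla_u$ with $u < t$ one has $s \ne u$ (otherwise $s = u < t$ contradicts $s \not\le t$), so part~\eqref{it:delta-nabla-orth} kills the flanking terms. Part~\eqref{it:exc-delta-end} follows by applying $\Hom(\Delta_s,-)$ and then $\Hom(-,\nabla_s\lb n\rb[i])$ to the triangle for $s$: both times the contribution from $K_s$ vanishes, the first time by part~\eqref{it:delta-nabla-orth} applied to each $\nabla_u$ with $u < s$, and the second time by Definition~\ref{defn:excep}\eqref{it:exc-nabla-hom}, since $u < s$ gives $u \not\ge s$.

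For part~\eqref{it:exc-delta-gen}, the triangle $\Delta_u \to \nabla_u \to K_u \to$ with $K_u \in \fD_{<u} \subseteq \fD_U$ (using that $U$ is a lower set) immediately shows $\Delta_u \in \fD_U$, giving the inclusion ``$\supseteq$''. For ``$\subseteq$'', I prove $\nabla_u \in \lgen \Delta_v : v \in U \rgen_\tri$ by induction along a total refinement $\le'$ of $\le$ realizing $S$ as a subset of $\Z_{\ge 0}$, which is permissible by the standing hypothesis on $S$; the induction hypothesis makes $K_u$, and hence $\nabla_u$, lie in $\lgen \Delta_v : v \in U\rgen_\tri$. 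The main subtlety is keeping track of why $\{v < u\} \subseteq U$, but this is exactly the lower-set condition. I do not expect any serious obstacle; the only care needed is in part~\eqref{it:delta-nabla-orth}, where one must verify the lower-set-style implication $u \le s$ and $s \not\ge t$ imply $u \not\ge t$ before claiming the vanishings for $K_s$.
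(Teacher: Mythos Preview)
Your proposal is correct and follows essentially the same approach as the paper. The only organizational difference is that the paper first isolates three consequences of the definitions---namely $\Delta_u \in \fD_{\le u}$, $\Hom(X,\nabla_u)=0$ for $X \in \fD_{\not\ge u}$, and $\Hom(\Delta_u,X)=0$ for $X \in \fD_{<u}$---and then derives each part from these, whereas you work directly with the distinguished triangle $\Delta_s \to \nabla_s \to K_s \to{}$ each time; for part~\eqref{it:exc-delta-gen} the paper inducts on the size of finite lower sets by removing a maximal element, which is equivalent to your induction along a total refinement $\le'$.
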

\begin{proof}
Note that Definition~\ref{defn:dualizable}\eqref{it:dual-cone} implies that
\begin{equation}\label{eqn:delta-subcat}
\Delta_u \in \fD_{\le u}\qquad\text{for all $u \in S$.}
\end{equation}
Definition~\ref{defn:excep}\eqref{it:exc-nabla-hom} and Definition~\ref{defn:dualizable}\eqref{it:dual-hom} imply that
\begin{align}
\Hom(X,\nabla_u) &= 0 &&\text{for all $X \in \fD_{\not\ge u}$,} \label{eqn:nabla-homvan}
\\
\Hom(\Delta_u, X) &= 0 &&\text{for all $X \in \fD_{<u}$,}\label{eqn:delta-homvan-pre}
\end{align}
respectively.  Let us now prove the various parts of the present lemma.

\eqref{it:delta-nabla-orth} If $s > t$, this holds by Definition~\ref{defn:dualizable}\eqref{it:dual-hom}.  If $s \not\ge t$, it follows from~\eqref{eqn:delta-subcat} and~\eqref{eqn:nabla-homvan}. 

\eqref{it:exc-delta-hom}  By part~\eqref{it:delta-nabla-orth}, we can strengthen~\eqref{eqn:delta-homvan-pre} to
\begin{align}\label{eqn:delta-homvan}
\Hom(\Delta_u, X) &= 0 &&\text{for all $X \in \fD_{\not\ge u}$.}
\end{align}
The claim follows from this and~\eqref{eqn:delta-subcat}.

\eqref{it:exc-delta-end} Let $K$ be the cone of $\iota_s: \Delta_s \to \nabla_s$, and consider the long exact sequence
\begin{multline*}
\cdots \to \Hom(\Delta_s, K\lb n \rb [i-1]) 
\to \Hom(\Delta_s, \Delta_s\lb n \rb [i]) \\
\to \Hom(\Delta_s, \nabla_s\lb n \rb [i])
\to \Hom(\Delta_s, K\lb n \rb [i]) \to \cdots.
\end{multline*}
The first and last terms vanish by~\eqref{eqn:delta-homvan-pre}, so the middle two are naturally isomorphic.  The proof that $\Hom(\Delta_s, \nabla_s\lb n \rb [i]) \cong \Hom(\nabla_s, \nabla_s\lb n \rb [i])$ is similar.

\eqref{it:exc-delta-gen} Let $\fD'_U = \lgen \Delta_u : u \in U\rgen_\tri$.  It follows from~\eqref{eqn:delta-subcat} that $\fD'_U \subset \fD_U$.  To prove equality, let us first consider the case where $U$ is finite.  We proceed by induction on the number of elements in $U$.  Choose a maximal element $t \in U$, and let $V = U \smallsetminus \{t\}$.  Then $\fD'_V = \fD_V$ by induction, so $\fD_U$ is generated by $\fD'_V$ together with $\{\nabla_t\lb n\rb\}_{n \in \Z}$.  By Definition~\ref{defn:dualizable}\eqref{it:dual-cone}, it is also generated by $\fD'_V$ together with $\{\Delta_t\lb n\rb\}_{n \in \Z}$.  We conclude that $\fD'_U = \fD_U$.

If $U$ is infinite, any object $X \in \fD_U$ is still contained in a subcategory $\fD_{U'}$ for some finite subset $U' \subset U$, so the preceding paragraph tells us that $\fD'_U = \fD_U$ in this case as well.
\end{proof}

\begin{lem}\label{lem:recolle-exc}
Let $\{\nabla_s\}_{s \in S}$ be a dualizable graded pre-exceptional set, and let $U \subset S$ be a lower set.  Let $t$ be a maximal element of $U$, and let $U' = U \smallsetminus \{t\}$.  Let $\iota: \fD_{U'} \hookrightarrow \fD_U$ be the inclusion functor, and let $\Pi: \fD_U \to \fD_U/\fD_{U'}$ be the Verdier quotient functor.  Both of these functors admit left and right adjoints, and together, the six functors
\[
\begin{tikzcd}
\fD_{U'} \ar[r, "\iota" description] &
\fD_U \ar[r, "\Pi" description] \ar[l, bend right, "\il"'] \ar[l, bend left, "\ir" ] &
\fD_U/\fD_{U'} \ar[l, bend right, "\pil"'] \ar[l, bend left, "\pir" ]
\end{tikzcd}
\]
constitute a recollement diagram.
\end{lem}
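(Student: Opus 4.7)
The plan is to construct the left and right adjoints $\il$ and $\ir$ to $\iota$ by exhibiting two semi-orthogonal decompositions of $\fD_U$, and then to obtain $\pil$, $\pir$ together with the recollement axioms from the standard formalism (see~\cite[\S1.4]{bbd}). The key input is the maximality of $t$ in $U$: for every $u \in U'$ one has $u \not\ge t$ and $t \not\le u$, so $\fD_{U'} \subseteq \fD_{\not\ge t}$. Combining this with~\eqref{eqn:nabla-homvan} and~\eqref{eqn:delta-homvan} yields the two orthogonalities
\[
\Hom(\fD_{U'}, \nabla_t\lb n\rb[i]) = 0
\qquad\text{and}\qquad
\Hom(\Delta_t\lb n\rb[i], \fD_{U'}) = 0
\qquad \text{for all $n, i \in \Z$.}
\]

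Using these, I would produce, for each $X \in \fD_U$, two distinguished triangles
\[
A \to X \to N \to
\qquad\text{and}\qquad
D \to X \to A' \to
\]
with $A, A' \in \fD_{U'}$, $N \in \lgen \nabla_t \rgen_\tri$, and $D \in \lgen \Delta_t \rgen_\tri$, by a bubble-sort argument modeled on~\eqref{eqn:star-sort}. Starting from an expression of $X$ as an iterated $\uast$-product of objects in $\widehat\cX$ with $\cX = \{\nabla_u : u \in U\}$ (for the first triangle; for the second, take $\cX = \{\Delta_u : u \in U\}$, available via Lemma~\ref{lem:pre-exc-basic}\eqref{it:exc-delta-gen}), whenever two adjacent factors $\nabla_t[i]\lb m\rb \uast \nabla_u[j]\lb n\rb$ with $u \in U'$ appear, the connecting morphism of the corresponding two-term triangle lies in $\Hom(\nabla_u, \nabla_t\lb\cdot\rb[\cdot]) = 0$ (Definition~\ref{defn:excep}\eqref{it:exc-nabla-hom}, since $u \not\ge t$), so that triangle splits and the two factors commute. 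Iterating pushes every $\nabla_t$-factor to the right end, giving $X \in \fD_{U'} \uast \lgen \nabla_t \rgen_\tri$; a symmetric sort with $\Delta$'s, using $\Hom(\Delta_t, \Delta_u\lb\cdot\rb[\cdot]) = 0$ from Lemma~\ref{lem:pre-exc-basic}\eqref{it:exc-delta-hom}, produces the second triangle.

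The orthogonalities above force both triangles to be unique up to unique isomorphism and functorial in $X$: applying $\Hom(-, \iota Y)$ for $Y \in \fD_{U'}$ to the first and using $\Hom(N\lb\cdot\rb[\cdot], \iota Y) = 0$ yields an isomorphism $\Hom(A, \iota Y) \simto \Hom(X, \iota Y)$, identifying $A$ with $\ir(X)$; dually $A' = \il(X)$. Once $\iota$ is known to be fully faithful and to admit both adjoints, the recollement formalism supplies $\pil$ and $\pir$ directly: $\pir$ (respectively $\pil$) is realized as a quasi-inverse of the equivalence $\Pi|_{\lgen\nabla_t\rgen_\tri}$ (respectively $\Pi|_{\lgen\Delta_t\rgen_\tri}$) composed with the inclusion into $\fD_U$, and the axioms $\Pi\iota = 0$, $\Pi\pil \cong \Pi\pir \cong \id$, together with the two unit/counit triangles, follow from the distinguished triangles and orthogonalities already established. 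The main technical point is verifying that the elementary two-term swap propagates through an arbitrary iterated $\uast$-product; this is a routine consequence of the associativity of $\uast$ together with the $\uast$-stability of the triangulated subcategories $\fD_{U'}$, $\lgen \nabla_t \rgen_\tri$, and $\lgen \Delta_t \rgen_\tri$.
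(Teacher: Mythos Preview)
Your approach is essentially the same as the paper's: construct the two semi-orthogonal decompositions $\fD_U = \fD_{U'} \uast \lgen \nabla_t \rgen_\tri = \lgen \Delta_t \rgen_\tri \uast \fD_{U'}$, then read off the adjoints and the recollement from them. The paper's sketch defers the details to~\cite[Lemma~4]{bez-quasi}; your bubble-sort construction of the decomposition triangles is a correct way to fill these in.

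There is, however, a directional slip in your adjunction step. You propose to apply $\Hom(-,\iota Y)$ to the triangle $A \to X \to N$ and use ``$\Hom(N\lb\cdot\rb[\cdot],\iota Y)=0$'' to identify $A$ with $\ir(X)$. But the vanishing you actually established at the outset is $\Hom(\fD_{U'},\lgen\nabla_t\rgen_\tri)=0$, i.e.\ $\Hom(\iota Y,N)=0$; the orthogonality $\Hom(N,\iota Y)=0$ is \emph{not} available and is generally false (if some $u\in U'$ satisfies $u<t$, then $\Hom(\nabla_t,\nabla_u\lb\cdot\rb[\cdot])$ need not vanish). The fix is simply to apply the covariant functor $\Hom(\iota Y,-)$ instead: the vanishing $\Hom(\iota Y,N[k])=0$ for all $k$ yields $\Hom(\iota Y,A)\cong\Hom(\iota Y,X)$, which is precisely the right-adjoint property $A=\ir(X)$. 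Dually, for the second triangle $D\to X\to A'$ you should apply $\Hom(-,\iota Y)$ and invoke $\Hom(\lgen\Delta_t\rgen_\tri,\fD_{U'})=0$ to obtain $A'=\il(X)$. Your conclusions $A=\ir(X)$ and $A'=\il(X)$ are correct; only the direction of the $\Hom$ functor and the orthogonality invoked need to be swapped.
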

\begin{proof}[Proof Sketch]
This result is proved in~\cite[Lemma~4]{bez-quasi}.  Here, we will briefly indicate the main steps of the argument.

Define $\fD_t = \lgen \Delta_t \rgen_\tri$ and $\fD^t = \lgen \nabla_t\rgen_\tri$.  The first step is to show that for any $X \in \fD_U$, there are distinguished triangles
\begin{equation}\label{eqn:recolle-dt}
\begin{aligned}
Y_1 \to &X \to Y_2 \to &  &\text{with $Y_1 \in \fD_t$, $Y_2 \in \fD_{U'}$,} \\
Z_1 \to &X \to Z_2 \to &  &\text{with $Z_1 \in \fD_{U'}$, $Z_2 \in \fD^t$.}
\end{aligned}
\end{equation}

Second, the functor $\Pi$ induces equivalences of categories
\[
\Pi|_{\fD_t}: \fD_t \simto \fD_U/\fD_{U'},
\qquad
\Pi|_{\fD^t}: \fD^t \simto \fD_U/\fD_{U'}.
\]
We define $\pil$ and $\pir$ to be their respective inverses (composed with the inclusion functor into $\fD_U$).

Finally, one shows (using~\cite[Proposition~1.1.9]{bbd}) that the distinguished triangles in~\eqref{eqn:recolle-dt} are in fact functorial.  There are canonical isomorphisms
\[
Y_1 \cong \pil(\Pi(X)), \qquad
Z_2 \cong \pir(\Pi(X)),
\]
and the maps $Y_1 \to X$ and $X \to Z_2$ are the adjunction maps that make $\pil$ and $\pir$ into the left and right adjoints of $\Pi$, respectively.  On the other hand, we define $\il(X) = Y_2$ and $\ir(X) = Z_1$.
\end{proof}

In the setting of Lemma~\ref{lem:recolle-exc}, it follows from Definition~\ref{defn:dualizable}\eqref{it:dual-cone} that
\begin{equation}\label{eqn:recolle-quot}
\Pi(\Delta_t) \cong \Pi(\nabla_t).
\end{equation}

%

\begin{prop}\label{prop:preexcep-struc}
Let $\{\nabla_s\}_{s \in S}$ be a dualizable graded pre-exceptional set, and let $\{\Delta_s\}_{s \in S}$ be the dual set.
\begin{enumerate}
\item If $\{\nabla_s\}_{s \in S}$ is a graded quasi-exceptional set, then the pair of subcategories\label{it:preexcep-t}
\[
\fD^{\le 0} = \lgen \Delta_s[i] : s \in S, i \ge 0 \rgen_\ext,
\qquad
\fD^{\ge 0} = \lgen \nabla_s[i] : s \in S, i \le 0 \rgen_\ext
\]
is a bounded $t$-structure on $\fD$.  Moreover, its heart $\fA$ is a finite-length abelian category.  For each $s \in S$, there is a unique simple object $L_s$ in $\fA$ that fits into a commutative diagram
\[
\begin{tikzcd}
\Delta_s \ar[rr, "\iota_s"] \ar[dr] && \nabla_s \\
& L_s \ar[ur]
\end{tikzcd}
\]
and every simple object is isomorphic to $L_s\lb n\rb$ for a unique pair $(s,n) \in S \times \Z$.
\item If $\{\nabla_s\}_{s \in S}$ is a graded co-quasi-exceptional set, then the pair of subcategories\label{it:preexcep-cot}
\[
\fD_{\ge 0} = \lgen \Delta_s[i] : s \in S, i \le 0 \rgen_\exto,
\qquad
\fD_{\le 0} = \lgen \nabla_s[i] : s \in S, i \ge 0 \rgen_\exto
\]
is a bounded co-$t$-structure.  Moreover, if $\fD$ is Karoubian, then its coheart $\fC$ is a Krull--Schmidt additive category.  For each $s \in S$, there is a unique indecomposable object $T_s$ in $\fC$ that fits into a commutative diagram
\[
\begin{tikzcd}
\Delta_s \ar[rr, "\iota_s"] \ar[dr] && \nabla_s \\
& T_s \ar[ur]
\end{tikzcd}
\]
and every indecomposable object is isomorphic to $T_s\lb n\rb$ for a unique pair $(s,n) \in S \times \Z$.
\end{enumerate}
\end{prop}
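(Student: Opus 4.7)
My plan is to induct on a compatible enumeration of $S$, building the structure stratum by stratum via the recollement of Lemma~\ref{lem:recolle-exc}, with Proposition~\ref{prop:gr-single-gen} providing the atomic pieces and Proposition~\ref{prop:recolle-cotstruc} (together with its $t$-structure analogue, \cite[Théorème~1.4.10]{bbd}) providing the glue.

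First, I would equip each slice $\fD_{\le s}/\fD_{<s}$ with a $t$-structure (resp.\ co-$t$-structure). By Lemma~\ref{lem:recolle-exc}, the restriction of $\Pi$ to $\fD^s = \lgen\nabla_s\rgen_\tri$ is an equivalence onto $\fD_{\le s}/\fD_{<s}$, so Hom-spaces between shifts and Tate twists of $\Pi(\nabla_s)$ match those of $\nabla_s$. Definition~\ref{defn:excep}\eqref{it:exc-nabla-end} then gives $\End(\Pi(\nabla_s)) \cong \bk$, while the (co-)quasi-exceptional hypothesis supplies the required negative (resp.\ positive) Ext-vanishing. Hence Proposition~\ref{prop:gr-single-gen} yields the desired structure on each slice, with unique simple (resp.\ indecomposable) object $\Pi(\nabla_s) \cong \Pi(\Delta_s)$ (by~\eqref{eqn:recolle-quot}) up to $\lb n\rb$.

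Next, using~\eqref{eqn:lt-finite}, I enumerate $S$ as $s_1, s_2, \ldots$ so that every $U_n = \{s_1, \ldots, s_n\}$ is a lower set. Gluing the structure already built on $\fD_{U_{n-1}}$ with the one on $\fD_{\le s_n}/\fD_{<s_n}$ via the recollement of Lemma~\ref{lem:recolle-exc} and Proposition~\ref{prop:recolle-cotstruc} (or its $t$-analogue) produces a bounded structure on each $\fD_{U_n}$. These assemble into a bounded $t$-structure (resp.\ co-$t$-structure) on $\fD$: by Lemma~\ref{lem:pre-exc-basic}\eqref{it:exc-delta-gen}, every $X \in \fD$ lies in some $\fD_{U_n}$, and its truncation triangle computed there is seen to be stable under further enlargement of $n$. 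To match the explicit descriptions in the proposition, I would check inductively that the positive and negative parts produced by gluing agree with the stated extension-closures, using the triangles~\eqref{eqn:recolle-dt} and the identity~\eqref{eqn:recolle-quot}; the key point in the induction step is that $\Delta_{s_n}$ (resp.\ $\nabla_{s_n}$) lies in the positive (resp.\ negative) part at height $0$, while $\fD_{U_{n-1}}$ contributes a subcategory of the positive and negative parts by the hypothesis.

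Finally, I would construct the ``heart-objects'' and establish the finiteness properties. In case~\eqref{it:preexcep-t}, define $L_s$ as the image of $\iota_s$ computed in the heart $\fA$; since $\Pi(L_s) \cong \Pi(\nabla_s)$ is the unique simple of $\fD_{\le s}/\fD_{<s}$ by Proposition~\ref{prop:gr-single-gen}\eqref{it:single-t}, and any sub- or quotient-object of $L_s$ in $\fA$ maps to a sub- or quotient-object in the quotient heart, simplicity follows. In case~\eqref{it:preexcep-cot}, $T_s$ is produced by Proposition~\ref{prop:indec-class}\eqref{it:pi-indec} applied to the indecomposable silting object $\Pi(\nabla_s) \in \fD_{\le s}/\fD_{<s}$; Remark~\ref{rmk:indec-class} then supplies the factorization $\Delta_s = \pil\Pi(\nabla_s) \to T_s \to \pir\Pi(\nabla_s) = \nabla_s$ of $\iota_s$. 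The classification of all simples (resp.\ indecomposables) as $L_s\lb n\rb$ (resp.\ $T_s\lb n\rb$) follows inductively from Proposition~\ref{prop:indec-class}\eqref{it:all-indec} and its $t$-structure analogue. The Krull--Schmidt property in case~\eqref{it:preexcep-cot} follows from Lemma~\ref{lem:krull-schmidt}, whose Hom-finiteness hypothesis propagates from finiteness of $\Hom(\nabla_s,\nabla_t\lb n\rb[i])$ for $s, t \in S$ (which itself follows by induction from the quasi-exceptional axioms) through the explicit descriptions of $\fD_{\ge 0}$ and $\fD_{\le 0}$. The main obstacle will be the bookkeeping required to identify the glued structure with the explicit extension-closure description rather than the implicit one furnished by Proposition~\ref{prop:recolle-cotstruc}; this is routine but needs the triangles~\eqref{eqn:recolle-dt} and Lemma~\ref{lem:pre-exc-basic}\eqref{it:exc-delta-gen} at each stage.
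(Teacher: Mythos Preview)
Your proposal is correct and follows essentially the same approach as the paper: induct on finite lower sets of $S$, use Proposition~\ref{prop:gr-single-gen} on each slice $\fD_{\le s}/\fD_{<s}$, glue via the recollement of Lemma~\ref{lem:recolle-exc} and Proposition~\ref{prop:recolle-cotstruc}, then invoke Proposition~\ref{prop:indec-class} and Remark~\ref{rmk:indec-class} (together with Lemma~\ref{lem:krull-schmidt}) for the classification in the coheart. One small caution: in part~\eqref{it:preexcep-t} your phrase ``image of $\iota_s$ computed in the heart'' presupposes $\Delta_s,\nabla_s\in\fA$, which need not hold in the merely quasi-exceptional case; the paper (following~\cite{bez-quasi}) instead obtains $L_s$ via intermediate extension through the recollement, which is what your subsequent sentence about $\Pi(L_s)$ actually describes.
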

\begin{proof}
\eqref{it:preexcep-t}~This is proved in~\cite[Propositions~1 and~2]{bez-quasi}.  A brief outline of the proof is as follows: one uses recollement to build up the $t$-structure from the case where $S$ is a singleton, and in that case, the $t$-structure comes from Proposition~\ref{prop:gr-single-gen}.

\eqref{it:preexcep-cot}~We follow the main idea of the $t$-structure case.  Assume first that $S$ is finite.  We proceed by induction on the size of $S$.  If $S$ is empty, then $\fD = 0$, and there is nothing to prove.  Otherwise, let $t$ be a maximal element of $S$, and let $S' = S \smallsetminus \{t\}$.  Form the recollement diagram for $\fD_{S'}$, $\fD_S = \fD$, and $\fD/\fD_{S'}$ as in Lemma~\ref{lem:recolle-exc}.  Let $A_t = \Pi(\Delta_t) = \Pi(\nabla_t)$ (see~\eqref{eqn:recolle-quot}), and observe that
\[
\fD/\fD_{S'} = \lgen A_t[i] : i \in \Z \rgen_\ext.
\]
The proof of Lemma~\ref{lem:recolle-exc} shows that
\begin{equation}\label{eqn:recolle-ind-step}
\pil(A_t) \cong \Delta_t
\qquad\text{and}\qquad
\pir(A_t) \cong \nabla_t.
\end{equation}
We have
\begin{multline*}
\Hom(A_t,A_t\lb n\rb[i]) \cong \Hom(\nabla_t, \pir\Pi(\nabla_t)\lb n\rb[i]) \\
\cong \Hom(\nabla_t,\nabla_t\lb n\rb[i]) 
\cong
\begin{cases}
\bk & \text{if $i = n = 0$,} \\
0 & \text{if $i > 0$, or if $i = 0$ and $n \ne 0$.}
\end{cases}
\end{multline*}
By Proposition~\ref{prop:gr-single-gen}, $\fD/\fD_{S'}$ admits a unique co-$t$-structure whose coheart contains $A_t$.  Combining the description from that proposition with~\eqref{eqn:recolle-ind-step}, we obtain
\begin{equation}\label{eqn:recolle-ind1}
\begin{aligned}
\pil((\fD/\fD_{S'})_{\ge 0}) &\subset \lgen \Delta_t[i] : i \le 0 \rgen_\exto, \\
\pir((\fD/\fD_{S'})_{\le 0}) &\subset \lgen \nabla_t[i] : i \ge 0 \rgen_\exto.
\end{aligned}
\end{equation}
(It follows from~\cite[(1.4.3.5)]{bbd} that these containments are actually equalities, but we will not need this claim.)

On the other hand, by induction, $\fD_{S'}$ has a co-$t$-structure given by
\begin{equation}\label{eqn:recolle-ind2}
\fD_{S',\ge 0} = \lgen \Delta_s[i] : s \in S', i \le 0 \rgen_\exto,
\qquad
\fD_{S',\le 0} = \lgen \nabla_s[i] : s \in S', i \ge 0 \rgen_\exto.
\end{equation}
We can then apply Proposition~\ref{prop:recolle-cotstruc} to obtain a co-$t$-structure on $\fD$ given by
\begin{align*}
\fD_{\ge 0} &= \{ X \mid \il(X) \in \fD_{S',\ge 0}, \Pi(X) \in (\fD/\fD_{S'})_{\ge 0} \}, \\
\fD_{\le 0} &= \{ X \mid \ir(X) \in \fD_{S',\le 0}, \Pi(X) \in (\fD/\fD_{S'})_{\le 0} \}.
\end{align*}
%
For $X \in \fD_{\ge 0}$, consider the distinguished triangle $\pil\Pi(X) \to X \to \iota\il(X) \to$.  
%
%
In view of~\eqref{eqn:recolle-ind1} and~\eqref{eqn:recolle-ind2}, we see that $X \in \lgen \Delta_s[i]: s \in S, i \le 0\rgen_\exto$.  Thus, we have shown the first containment below:
\begin{equation}\label{eqn:recolle-contain}
\fD_{\ge 0} \subset \lgen \Delta_s[i] : s \in S, i \le 0 \rgen_\exto,
\qquad
\fD_{\le 0} \subset \lgen \nabla_s[i] : s \in S, i \ge 0 \rgen_\exto.
\end{equation}
The second holds by a similar argument.  One can check using Lemma~\ref{lem:pre-exc-basic} that if $X \in \lgen \Delta_s[i] : s \in S, i \le 0 \rgen_\exto$ and $Y \in \lgen \nabla_s[i] : s \in S, i \ge 0 \rgen_\exto$, then $\Hom(X,Y[1]) = 0$.  From this observation, it can be deduced that both containments in~\eqref{eqn:recolle-contain} are equalities.  This completes the construction of the co-$t$-structure in the case where $S$ is finite.  We denotes its coheart by $\fC$.

From now on, we assume that $\fD$ is Karoubian.  We claim that $\fC$ is Krull--Schmidt.  It is enough to check condition~\eqref{it:ks-findim} from Lemma~\ref{lem:krull-schmidt}.  This condition follows easily from the observation that if $i \le 0 \le j$, then $\Hom(\Delta_s[i], \nabla_{s'}[j]\lb n\rb)$ is finite-dimensional.  The classification and description of indecomposable objects in $\fC$ follows from Proposition~\ref{prop:indec-class} and Remark~\ref{rmk:indec-class}, and induction on $S$.

Finally, suppose $S$ is infinite.  Any finite collection of objects is contained in some subcategory $\fD_U$ where $U \subset S$ is a finite lower set.  The axioms for a co-$t$-structure hold for $\fD$ because they hold for each such $\fD_U$.
\end{proof}

\begin{rmk}\label{rmk:silting-gen}
If $\fC$ is the coheart of a co-$t$-structure on $\fD$ obtained from a graded co-quasi-exceptional set as in Proposition~\ref{prop:preexcep-struc}\eqref{it:preexcep-cot}, then we claim that
\[
\fD = \lgen \fC \rgen_\tri.
\]
To see this, it is enough to check that the right-hand side contains all $\nabla_s$, and this is easily seen by induction on $S$ using the recollement formalism.  Note that this is stronger than property~\eqref{it:silting-gen} in Definition~\ref{defn:silting-cat}.
\end{rmk}

We now consider a special case where the co-$t$-structure and $t$-structure defined in the preceding proposition are 
highly compatible. 
\begin{cor}\label{prop:silt-tilt}
Let $\{\nabla_s\}_{s \in S}$ be a dualizable graded exceptional set, with dual set $\{\Delta_s\}_{s \in S}$.  Let $\fA$ be the heart of the corresponding $t$-structure, and let $\fC$ be the coheart of the corresponding co-$t$-structure.  Assume furthermore that $\nabla_s$ and $\Delta_s$ belong to $\fA$.
Then $\fA$ is a graded highest-weight category, and $\fC$ is the category of tilting objects in $\fA$.
\end{cor}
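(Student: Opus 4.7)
Since a graded exceptional set is simultaneously a graded quasi-exceptional set and a graded co-quasi-exceptional set, Proposition~\ref{prop:preexcep-struc} yields in one stroke the $t$-structure with heart $\fA$, the co-$t$-structure with coheart $\fC$, together with simples $L_s \in \fA$ and indecomposables $T_s \in \fC$ fitting in the commutative triangles $\Delta_s \to L_s \to \nabla_s$ and $\Delta_s \to T_s \to \nabla_s$. The hypothesis $\Delta_s, \nabla_s \in \fA$ is precisely what will allow us to reinterpret the derived-category data of Lemma~\ref{lem:pre-exc-basic} as abelian-category data in $\fA$.

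To see that $\fA$ is a graded highest-weight category, I would take $\{L_s\lb n\rb\}_{(s,n) \in S \times \Z}$ as the simples and $\{\Delta_s\lb n\rb\}$, $\{\nabla_s\lb n\rb\}$ as the standards and costandards. Since $\Delta_s \in \fA$, the triangle from Proposition~\ref{prop:preexcep-struc}(1) becomes a short exact sequence $K_s \hookrightarrow \Delta_s \twoheadrightarrow L_s$ in $\fA$, with $K_s \in \fA \cap \fD_{<s}$. Induction on $S$ via the recollement of Lemma~\ref{lem:recolle-exc} identifies $\fA \cap \fD_{<s}$ with the Serre subcategory of $\fA$ generated by $\{L_t\lb n\rb : t < s,\ n \in \Z\}$, so $K_s$ has only composition factors of the required form; the mirror statement applies on the $\nabla_s$-side. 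Combining Lemma~\ref{lem:pre-exc-basic} with the exceptional axiom yields
\[
\Ext^1_\fA(\Delta_s, \nabla_t\lb n\rb) \cong \Hom_\fD(\Delta_s, \nabla_t\lb n\rb[1]) = 0
\]
for all $s,t,n$, together with the expected $\Hom$-values in degree $0$. These vanishing and nondegeneracy conditions on standards and costandards are exactly the defining data of a graded highest-weight structure on $\fA$.

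For the tilting claim, first note that since $\Delta_s, \nabla_s \in \fA$ and the halves of a $t$-structure are closed under extensions, $\lb {\pm 1}\rb$-shifts, $[1]$-shifts in the relevant direction, and direct summands, the defining generators of $\fD_{\ge 0}$ (resp.\ $\fD_{\le 0}$) from Proposition~\ref{prop:preexcep-struc}(2) lie in $\fD^{\ge 0}$ (resp.\ $\fD^{\le 0}$); hence $\fC = \fD_{\ge 0} \cap \fD_{\le 0} \subset \fA$. For any $X \in \fC$ and any $t,n$, the object $\nabla_t\lb n\rb[1]$ lies in $\fD_{\le 0}[1]$, so the Hom-vanishing axiom of the co-$t$-structure gives
\[
\Ext^1_\fA(X, \nabla_t\lb n\rb) \cong \Hom_\fD(X, \nabla_t\lb n\rb[1]) = 0,
\]
and symmetrically $\Ext^1_\fA(\Delta_t\lb n\rb, X) = 0$. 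By the standard characterization in a highest-weight category, these vanishings imply that $X$ admits both a $\Delta$- and a $\nabla$-filtration, i.e.\ that $X$ is tilting. Conversely, a tilting object, being an iterated extension of objects $\Delta_s\lb n\rb$ and also of objects $\nabla_s\lb n\rb$, lies in $\fD_{\ge 0} \cap \fD_{\le 0} = \fC$, so the two notions coincide.

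The main obstacle is the identification in Step~2 of $\fA \cap \fD_{<s}$ with the Serre subcategory of $\fA$ generated by the lower simples --- equivalently, the statement that the filtration of $\fD$ by the subcategories $\{\fD_U\}$ indexed by finite lower sets $U \subset S$ restricts compatibly to $\fA$. This comes down to checking that the recollement of Lemma~\ref{lem:recolle-exc} intertwines the $t$-structures of Proposition~\ref{prop:preexcep-struc} on $\fD_{U'}$, $\fD_U$, and the quotient; once this compatibility is in place, the remaining verifications reduce to careful bookkeeping with Lemma~\ref{lem:pre-exc-basic} and the placement of $\Delta_s, \nabla_s$ in $\fA$.
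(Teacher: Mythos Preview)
Your argument is correct and more explicit than the paper's. The paper dispatches the highest-weight claim by citing \cite[\S3.5]{mr}, then argues in two lines: tilting objects of $\fA$, having both $\Delta$- and $\nabla$-filtrations, visibly lie in $\fD_{\ge 0} \cap \fD_{\le 0} = \fC$ by the description in Proposition~\ref{prop:preexcep-struc}\eqref{it:preexcep-cot}; and since a coheart determines its co-$t$-structure uniquely (Proposition~\ref{prop:silt_co-t-structure}, cf.\ Remark~\ref{rmk:natural-co-t-structure}), the natural co-$t$-structure attached to $\Tilt(\fA)$ must coincide with the one built from the exceptional set, so $\Tilt(\fA) = \fC$. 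Your route instead verifies the highest-weight axioms by hand from the pre-exceptional machinery, establishes $\fC \subset \fA$ directly from the containments $\fD_{\ge 0} \subset \fD^{\ge 0}$ and $\fD_{\le 0} \subset \fD^{\le 0}$, and then proves both inclusions $\fC \subset \Tilt(\fA)$ and $\Tilt(\fA) \subset \fC$ via the $\Ext^1$-vanishing criterion for standard/costandard filtrations. This buys self-containment and makes the role of the hypothesis $\Delta_s, \nabla_s \in \fA$ transparent, at the price of the recollement-compatibility check you flag in your last paragraph; the paper's version is shorter but leans on the external reference and on the uniqueness statement for co-$t$-structures.
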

\begin{proof}
The fact that $\fA$ is a graded highest weight category follows from \cite[\S 3.5]{mr}. Moreover, by
Proposition~\ref{prop:preexcep-struc}\eqref{it:preexcep-cot}, it can be immediately deduced that the tilting objects of $\fA$ must reside in the
coheart. This forces the co-$t$-structure to coincide with the unique co-$t$-structure characterized by the preceding proposition. 
\end{proof}

\section{Cotangent bundles of partial flag varieties}
\label{sec:cotangent}

Let $G$ be a connected reductive group over an algebraically closed field $\bk$ of characteristic $p \ge 0$.  Assume that $p$ is ``pretty good'' for $G$, in the sense of~\cite[Definition~2.11]{herpel}.  This condition implies the following additional conditions:
\begin{enumerate}
\item \cite[Lemma~2.3]{modular-LV} There exists a separable central isogeny $\widetilde{G} \to G$, where $\widetilde{G}$ has a simply connected derived subgroup.
\item \cite[Lemma~2.12]{herpel} The characteristic of $\bk$ is good for $G$.
\item \cite[Proposition~12]{mt1} There exists a nondegenerate $G$-invariant bilinear form on $\fg$.
\end{enumerate}

Fix a Borel subgroup $B \subset G$ and a maximal torus $T$. Let $W = N_G(T)/T$ be the Weyl group, and let $\bX$ be the character lattice of $T$. Let $\Phi \subset \bX$ be the root system of $(G,T)$, and let $\Phi^+ \subset \Phi$ be the set of positive roots, chosen so that $B$ corresponds to the \emph{negative} roots.  Let $S$ be the set of simple reflections in $W$, and for $s \in S$, let $\alpha_s \in \Phi^+$ be the corresponding simple root.  Let $\bXp \subset \bX$ be the set of dominant weights corresponding to $\Phi^+$.

Let $\Waff = W \ltimes \bX$ be the (extended) affine Weyl group.  For $\lambda \in \bX$, let $t_\lambda = 1 \ltimes \lambda$ be the corresponding element of $\Waff$.  We will use additive notation for $\bX$ and multiplicative notation for $\Waff$, so that $t_{\lambda+\mu} = t_\lambda t_\mu$.  Recall that although $\Waff$ is not a Coxeter group in general, it shares many features with Coxeter groups.  In particular, it makes sense to speak of the \emph{length} of an element $w \in \Waff$, denoted by $\ell(w)$.  There is also a \emph{Bruhat order} on $\Waff$, denoted by $\le_\Bru$.  For each $\lambda \in \bX$, let
\[
w_\lambda = \text{the unique element of minimal length in the coset $Wt_\lambda$.}
\]
We define an order $\le$ on $\bX$ by
\[
\lambda \le \mu \qquad\text{if and only if}\qquad
w_\lambda \le_\Bru w_\mu.
\]

Next, for each subset $I \subset S$, we set
\begin{align*}
\bXp_I &= \{ \lambda \in \bX \mid \text{$\la \alpha_s^\vee, \lambda \ra \ge 0$ for all $s \in I$} \}, \\
\bXpp_I &= \{ \lambda \in \bX \mid \text{$\la \alpha_s^\vee, \lambda \ra > 0$ for all $s \in I$} \}.
\end{align*}
Let $P_I \subset G$ be the parabolic subgroup containing $B$ and corresponding to $I$, and let $U_I$ be its unipotent radical.  Let $\fn_I$ be the Lie algebra of $U_I$, and let 
\[
\tcN_I = G \times^{P_I} \fn_I.
\]
Note that $P_\varnothing = B$.  When $I = \varnothing$, we often omit the subscript: we write $U$ for the unipotent radical of $B$, and $\fn$ for its Lie algebra, and we denote
\[
\tcN = G \times^B \fn.
\]
Let $\Gm$ act on $\fn_I$ by $z \cdot x = z^{-2}x$. This action commutes with the action of $P_I$ and 
 induces a positive even grading on the algebra 
$\Sym(\fn_I^*)$ which we call the \emph{cohomological grading}. This allows us to form the bounded derived 
category 
\[
\Db\Coh^{P_I\times \Gm}(\fn_I) = \Db\Sym(\fn_I^*)\text{-mod}_{P_I\times \Gm},
\]
where $\Sym(\fn_I^*)\text{-mod}_{P_I\times \Gm}$ denotes the category of finitely generated $P_I$-equivariant graded modules 
over the algebra $\Sym(\fn_I^*)$. We can equivalently consider the category $\Db\Coh^{G\times \Gm}(\tcN_I)$
by recalling the equivalence 
\begin{equation}\label{eqn:restric-equiv}
j_I^*: \Coh^{G\times \Gm}(\tcN_I) \xrightarrow{\sim} \Coh^{P_I\times \Gm}(\fn_I).
\end{equation}
induced by pulling back along the inclusion $j_I: \fn_I \hookrightarrow \tcN_I$.

For any $m \in \Z$, let $\bk_m$ be the $1$-dimensional $\Gm$-representation with the action given by $z \cdot x = z^mx$.  Define an autoequivalence
\[
\la 1 \ra : \Db\Coh^{G\times \Gm}(\tcN_I) \to \Db\Coh^{G\times \Gm}(\tcN_I) 
\qquad\text{by}\qquad
\cF\la 1\ra = \cF \otimes \bk_{-1}.
\]
We will also work with the autoequivalence 
\[
\{1\} := \la -1\ra[1] : \Db\Coh^{G\times \Gm}(\tcN_I) \to \Db\Coh^{G\times \Gm}(\tcN_I).
\]
It is easy to see that both $\la 1 \ra$ and $\{1\}$ are Tate twists on $\Db\Coh^{G\times \Gm}(\tcN_I)$.
\begin{rmk}[Tate twist conventions]\label{rmk:twist-conventions}
It is important to note that the $\la m \ra$ employed here is \emph{opposite} to the convention in 
\cite[\S9.1]{achar-riche}, but consistent with \cite{achar-vogan}, \cite{achar-hardesty} and \cite{achar-perv}.  (More precisely, the Tate twist in \cite[\S 2.2]{achar-perv} is opposite to our convention, but the action on $\fn$ employed there is also opposite to ours, so statements from~\cite{achar-perv} can be used here unmodified.)
\end{rmk}

For $\lambda \in \bXpp_I$, let $\tnabla_{I,\lambda}$ and $\tDelta_{I,\lambda}$ be the objects defined in \cite[\S9.5]{achar-riche}. (For $I = \varnothing$, the definition of these objects goes back to~\cite{bez-tilting}; see also~\cite{arider,mr}.)  The following proposition gives a key property of these objects.

\begin{prop}[{\cite{acr,achar-riche,arider,bez-tilting,mr}}]\label{prop:cotangent-exc}
The collection $\{\tnabla_{I,\lambda}\}_{\lambda \in \bXpp_I}$ is a graded exceptional set with respect to both $\la 1\ra$ and $\{ 1\}$, and $\{\tDelta_{I, \lambda}\}_{\lambda \in \bXpp_I}$ is its dual set.
\end{prop}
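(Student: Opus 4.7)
The plan is to derive this statement by assembling results from the cited references; the content is essentially known, and the task is to translate it into the conventions of the present paper. The case $I = \varnothing$ was originally established in \cite[Proposition~3]{bez-tilting} in characteristic zero and extended to positive characteristic in \cite{mr,arider}, while the parabolic generalization is carried out in \cite[\S9.5]{achar-riche} via a reduction to the Borel case combined with Kempf vanishing on $P_I/B$.

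First, I would recall the definitions of $\tnabla_{I,\lambda}$ and $\tDelta_{I,\lambda}$ from \cite[\S9.5]{achar-riche}. With the objects fixed, the main content is to verify the axioms of Definition~\ref{defn:excep}, including the exceptional axiom $(4^\pm)$. The generation axiom is a standard consequence of the fact that the line bundles $\cO(\lambda)$ with $\lambda \in \bXpp_I$ generate the derived category of $G$-equivariant coherent sheaves on $G/P_I$, together with the affine projection $\tcN_I \to G/P_I$. The Hom-vanishing and endomorphism axioms reduce, via the Koszul resolution of the zero section in $\tcN_I$, to computations of $\coH^\bullet(G/P_I,\cO(\mu))$ for various weights $\mu$; the triangularity with respect to $\le$ reflects the compatibility between the Bruhat order on $\Waff$ and the vanishing pattern of line bundle cohomology on $G/P_I$.

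Next, I would verify dualizability in the sense of Definition~\ref{defn:dualizable}. The construction of $\tDelta_{I,\lambda}$ in the references yields a canonical morphism $\iota_\lambda: \tDelta_{I,\lambda} \to \tnabla_{I,\lambda}$ whose cone is shown (up to Tate twist and cohomological shift) to be filtered by the objects $\tnabla_{I,\mu}$ with $\mu < \lambda$; this establishes condition~\eqref{it:dual-cone} of Definition~\ref{defn:dualizable}. Condition~\eqref{it:dual-hom} then follows from the pre-exceptional axioms established in the previous paragraph by an argument in the spirit of Lemma~\ref{lem:pre-exc-basic}. Finally, the statement for both Tate twist conventions is formal: since $\{1\} = \la -1\ra[1]$, any axiom involving the quantifier ``for all $n \in \Z$'' transfers immediately between the two gradings.

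The main obstacle is bookkeeping rather than genuine mathematics: the cited references \cite{acr,achar-riche,arider,bez-tilting,mr} use slightly incompatible conventions for the Tate twist, the $\Gm$-action on $\fn_I$, and the cohomological shift in the definitions of $\tnabla_{I,\lambda}$ and $\tDelta_{I,\lambda}$ (cf.\ Remark~\ref{rmk:twist-conventions}), so care is needed to confirm that each cited statement matches the form asserted here.
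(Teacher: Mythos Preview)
Your proposal is correct and aligned with the paper's approach: the paper does not give a proof of this proposition at all, but simply records the precise references---\cite[\S9.5]{achar-riche} for the $\la 1\ra$ case (with \cite{arider,mr,bez-tilting} for the history when $I=\varnothing$), and \cite[Lemma~3.1]{acr} for the $\{1\}$ case---together with a footnote explaining how to drop the simply-connected hypothesis. Your sketch of the underlying argument (generation by line bundles, Koszul resolution, Kempf vanishing) is accurate but goes well beyond what the paper records; your formal observation that the $\{1\}$ case follows from the $\la 1\ra$ case because $\{1\} = \la -1\ra[1]$ and the exceptional axiom $(4^\pm)$ pins down $\Hom(\tnabla_{I,\lambda},\tnabla_{I,\lambda}\la m\ra[j])$ completely is correct and arguably more efficient than the paper's separate citation of \cite{acr}.
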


For $\la 1\ra$, this statement appears in~\cite[\S9.5]{achar-riche}; for $\{1\}$, see~\cite[Lemma~3.1]{acr}.\footnote{In~\cite[\S9.5]{achar-riche}, it is assumed that $G$ has simply connected derived subgroup, but this assumption can be dropped by the reasoning explained in~\cite[\S4.1]{modular-LV}. The proof of~\cite[Lemma~3.1]{acr} then also applies in this generality.}  In the special case where $I = \varnothing$, the $\la 1\ra$ part of this statement was proved in~\cite{arider, mr}; the essential ideas go back to~\cite{bez-tilting}, which treats the case $\bk = \C$.

In view of Proposition~\ref{prop:cotangent-exc}, we may consider the following notions:
\begin{enumerate}
\item The \emph{exotic $t$-structure}, obtained by applying Proposition~\ref{prop:preexcep-struc}\eqref{it:preexcep-t} with respect to $\la 1\ra$.  This $t$-structure has been extensively studied in the papers mentioned above.
\item The \emph{representation-theoretic $t$-structure}, obtained by applying Proposition \ref{prop:preexcep-struc}\eqref{it:preexcep-t} with respect to $\{ 1\}$. This $t$-structure was implicitly used in~\cite{achar-riche}, and explicitly studied in~\cite{acr}.
\item The \emph{supportive co-$t$-structure}, obtained by applying Proposition~\ref{prop:preexcep-struc}\eqref{it:preexcep-cot} with respect to $\{ 1\}$. This co-$t$-structure is one the main objects of study of the present paper.
\item One may also apply Proposition~\ref{prop:preexcep-struc}\eqref{it:preexcep-cot} with respect to $\la 1\ra$, but the resulting co-$t$-structure does not appear to be useful for the goals of this paper, and will not be used.
\end{enumerate}

The simple objects in the heart of the exotic $t$-structure are indexed (up to Tate twist) by $\bXpp_I$; we set  
\[
\tfL_{I,\lambda} = 
\text{the simple exotic object labeled by $\lambda \in \bXpp_I$}.
\]

The coheart of the supportive co-$t$-structure (so named because of its role in the study of support varieties) is denoted by $\cS^{G \times \Gm}(\tcN_I)$.  We emphasize that this is the only co-$t$-structure on $\Db\Coh^{G\times \Gm}(\tcN_I)$ we will consider. Thus, objects of $\cS^{G \times \Gm}(\tcN_I)$ may simply be called ``silting objects on $\tcN_I$.''  The indecomposable silting objects are also indexed by $\bXpp_I$; we set 
\begin{equation}\label{eqn:para-silting}
\tfE_{I,\lambda} = \text{the indecomposable silting object labeled by $\lambda \in \bXpp_I$}.
\end{equation}
When $I = \varnothing$, the subscript $I$ will often be omitted from the notation.

The representation-theoretic $t$-structure is a highest-weight category  (see~\cite[\S3.D]{acr}), and so the following lemma is a consequence of Corollary~\ref{prop:silt-tilt}. 

\begin{lem}\label{lem:tcn-silt-tilt}
The category $\cS^{G \times \Gm}(\tcN_I)$ is the category of tilting objects in the heart of the representation-theoretic $t$-structure on $\Db\Coh^{G\times \Gm}(\tcN_I)$.
\end{lem}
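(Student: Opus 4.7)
The plan is to deduce Lemma~\ref{lem:tcn-silt-tilt} directly from Corollary~\ref{prop:silt-tilt} applied to the dualizable graded exceptional set $\{\tnabla_{I,\lambda}\}_{\lambda \in \bXpp_I}$ with dual set $\{\tDelta_{I,\lambda}\}_{\lambda \in \bXpp_I}$, with respect to the Tate twist $\{1\}$. By Proposition~\ref{prop:cotangent-exc}, this is indeed a graded exceptional set (and in particular dualizable, by the remark after Definition~\ref{defn:dualizable}). The representation-theoretic $t$-structure is by definition the $t$-structure obtained via Proposition~\ref{prop:preexcep-struc}\eqref{it:preexcep-t} applied to this data, and the supportive co-$t$-structure is the one obtained via Proposition~\ref{prop:preexcep-struc}\eqref{it:preexcep-cot} from the same data. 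So once the hypothesis of Corollary~\ref{prop:silt-tilt} is verified, the conclusion is immediate: the coheart $\cS^{G\times\Gm}(\tcN_I)$ consists exactly of the tilting objects in the heart of the representation-theoretic $t$-structure.

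The only substantive thing to check is the hypothesis that $\tnabla_{I,\lambda}$ and $\tDelta_{I,\lambda}$ lie in the heart $\fA$ of the representation-theoretic $t$-structure. This is precisely the content of the highest-weight structure on this heart established in \cite[\S3.D]{acr} (cited just before the lemma statement): in a graded highest-weight category, the standard and costandard objects are by definition objects of the abelian heart, not merely of the ambient triangulated category. Concretely, one must recall from \cite{acr} that the standard and costandard objects of this highest-weight structure are exactly $\tDelta_{I,\lambda}$ and $\tnabla_{I,\lambda}$ (suitably Tate-normalized so the exceptional axioms use $\{1\}$ rather than $\la 1\ra$).

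I expect no real obstacle here: the proof is essentially a bookkeeping exercise matching the abstract hypotheses of Corollary~\ref{prop:silt-tilt} to the concrete objects on $\tcN_I$. The mild subtlety — and the only place one has to be careful — is the distinction between the two Tate twists $\la 1\ra$ and $\{1\}$ singled out in Remark~\ref{rmk:twist-conventions} and in the enumeration following Proposition~\ref{prop:cotangent-exc}: the \emph{exotic} $t$-structure uses $\la 1\ra$ while the \emph{representation-theoretic} $t$-structure and the \emph{supportive} co-$t$-structure both use $\{1\}$. Once one confirms that it is the $\{1\}$-version of the exceptional collection that governs both structures appearing in the lemma, Corollary~\ref{prop:silt-tilt} applies verbatim and the identification of $\cS^{G\times\Gm}(\tcN_I)$ with the tilting objects follows.
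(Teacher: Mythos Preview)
Your proposal is correct and follows essentially the same approach as the paper: the paper simply notes that the heart of the representation-theoretic $t$-structure is a highest-weight category by \cite[\S3.D]{acr} and then invokes Corollary~\ref{prop:silt-tilt}. Your write-up merely makes explicit the bookkeeping (checking that $\tnabla_{I,\lambda}$ and $\tDelta_{I,\lambda}$ lie in the heart, and that the relevant Tate twist is $\{1\}$) that the paper leaves implicit.
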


In the case $I = \varnothing$, the objects $\tfE_\lambda$ were introduced in~\cite{achar-hardesty-riche} in a different framework: they were obtained from the machinery of ``parity objects,'' rather than from co-$t$-structures.  The following fact, explained in~\cite[Remark~3.8]{acr}, says that these two approaches yield the same objects.

\begin{lem}\label{lem:silting-parity}
The objects $\tfE_\lambda \in \Db\Coh^{G\times \Gm}(\tcN)$ are precisely the parity exotic objects of~\cite[\S3.4]{achar-hardesty-riche}.
\end{lem}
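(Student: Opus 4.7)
The plan is to combine Lemma~\ref{lem:tcn-silt-tilt}, which identifies $\cS^{G\times\Gm}(\tcN)$ with the category of tilting objects in the heart of the representation-theoretic $t$-structure, with the cohomological characterization of parity exotic objects. So the proof reduces to showing that the parity exotic objects of \cite[\S3.4]{achar-hardesty-riche} are exactly the indecomposable tilting objects in the heart of the representation-theoretic $t$-structure. This is essentially the content of \cite[Remark~3.8]{acr}, so the body of the proof is a careful citation of that reference after setting up the two characterizations.

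Spelled out in a little more detail, here is how I would carry it out. Let $\fA$ denote the heart of the representation-theoretic $t$-structure on $\Db\Coh^{G\times\Gm}(\tcN)$, with standard and costandard objects $\tDelta_\lambda$ and $\tnabla_\lambda$ for $\lambda \in \bX$. By \cite[\S3.D]{acr}, $\fA$ is a graded highest-weight category, with the order on weights given by $\le$. First I would recall from \cite[\S3.4]{achar-hardesty-riche} that a parity exotic object $E$ is an indecomposable object of $\Db\Coh^{G\times\Gm}(\tcN)$ which admits filtrations by both $\tDelta_\lambda\{i\}$ and $\tnabla_\lambda\{i\}$ (equivalently, has vanishing higher Ext with all $\tnabla_\lambda\{i\}$ and with all $\tDelta_\lambda\{i\}$, using the $\{1\}$-grading). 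This is exactly the defining condition for an indecomposable tilting object in the graded highest-weight category $\fA$.

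Having identified parity exotic objects with the indecomposable tilting objects of $\fA$, Lemma~\ref{lem:tcn-silt-tilt} (i.e., the application of Corollary~\ref{prop:silt-tilt} to the representation-theoretic $t$-structure) gives that these are exactly the indecomposable objects of the coheart $\cS^{G\times\Gm}(\tcN)$. Finally, the labeling of both classes of objects by $\lambda \in \bXp$ is via the canonical factorization $\tDelta_\lambda \to E \to \tnabla_\lambda$ (Proposition~\ref{prop:preexcep-struc}\eqref{it:preexcep-cot} for the silting side, the standard tilting-module construction for the parity-exotic side), so the matching is compatible with labels, yielding the isomorphism $\tfE_\lambda \cong E_\lambda$.

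The only real obstacle is to make sure the conventions on Tate twists and on the parity-exotic filtration condition in \cite{achar-hardesty-riche} agree with those of \cite{acr} and of the present paper; this is mainly a bookkeeping issue (cf.\ Remark~\ref{rmk:twist-conventions}), but it must be checked explicitly because parity-type vanishing is stated in terms of the $\{1\}$-twist, not the $\la 1\ra$-twist. Once that is settled, no new ideas are needed beyond citing \cite[Remark~3.8]{acr}.
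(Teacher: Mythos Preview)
Your proposal is correct and takes essentially the same approach as the paper: the paper does not give a proof at all, but simply points to \cite[Remark~3.8]{acr} as the explanation, which is exactly the reference you build your argument around (together with Lemma~\ref{lem:tcn-silt-tilt}, which is itself just the specialization of Corollary~\ref{prop:silt-tilt}). One small slip: the indecomposable silting objects on $\tcN$ are indexed by $\lambda \in \bX = \bXpp_\varnothing$, not $\bXp$.
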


(As explained in~\cite[Remark~3.8]{acr}, an analogous statement holds for $\tfE_{I,\lambda}$ for any $I$, but we will not need this more general statement.)
Lemma~\ref{lem:silting-parity} makes a number of results from~\cite{achar-hardesty-riche} available to us.

\section{The nilpotent cone}
\label{sec:nilpotent}

Let $\cN$ be the nilpotent cone of $G$ and let
$
\pi: \tcN \rightarrow \cN
$
be the Springer resolution. 
Each $\lambda \in \bX$ gives rise to an equivariant line bundle $\cO_{\tcN}(\lambda)$ on $\tcN$ and 
an \emph{Andersen--Jantzen sheaf}
\[
\ocA_{\lambda} = \pi_*\cO_{\tcN}(\lambda).
\]
(Note that $\pi_*: \Db\Coh^{G\times \Gm}(\tcN) \rightarrow \Db\Coh^{G\times \Gm}(\cN)$ denotes the derived push-forward.)

For any $\lambda \in \bX$, let 
\[
\delta_\lambda = \min \{ \ell(v) \mid \text{$v \in W$ and $v\lambda \in \bXp$} \},
\qquad
\delta^*_\lambda = \delta_{w_0\lambda},
\]
and $\dom \lambda = W\lambda\cap \bX^+$.
Finally, for $\lambda \in \bXp$, we set
\[
\onabla_\lambda = \ocA_\lambda\la -\delta^*_\lambda \ra,
\qquad
\oDelta_\lambda = \ocA_{w_0\lambda}\la \delta^*_\lambda \ra, \quad
\fL_\lambda = \pi_*\tfL_{w_0\lambda}.
\]

\begin{lem}\label{lem:exotic-push}
For any $\lambda \in \bX$, we have
\[
\pi_*\tnabla_\lambda \cong \onabla_{\dom \lambda}\la \delta^*_\lambda\ra,
\quad
\pi_*\tDelta_{\lambda} \cong \oDelta_{\dom \lambda}\la -\delta^*_\lambda\ra,
\quad 
\pi_*\tfL_\lambda = \begin{cases}
\fL_{w_0\lambda} & \text{if $\lambda \in -\bX^+$} \\
0 & \text{otherwise}.
\end{cases}
\]
\end{lem}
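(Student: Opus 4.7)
The plan is to reduce to the case $\lambda \in -\bXp$, where the exotic objects $\tnabla_\lambda$, $\tDelta_\lambda$, and $\tfL_\lambda$ all coincide (up to explicit Tate twists) with the line bundle $\cO_{\tcN}(\lambda)$, so that $\pi_*$ can be computed directly from the defining identity $\pi_*\cO_{\tcN}(\mu) = \ocA_\mu$. Indeed, by \cite{mr, achar-riche, bez-tilting}, for antidominant $\lambda$ the bundle $\cO_{\tcN}(\lambda)$ is simple in the heart of the exotic $t$-structure; combined with $\dom\lambda = w_0\lambda$ and $\delta^*_\lambda = 0$, a direct comparison of twists against the definitions of $\onabla_{w_0\lambda}$, $\oDelta_{w_0\lambda}$, and $\fL_{w_0\lambda}$ establishes all three formulas in this case.

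For general $\lambda \in \bX$, let $v \in W$ be the minimal-length element with $v\lambda \in -\bXp$, so $\ell(v) = \delta^*_\lambda$. The construction of $\tnabla_\lambda$ and $\tDelta_\lambda$ in \cite{achar-riche} expresses these in terms of $\tnabla_{v\lambda}$ and $\tDelta_{v\lambda}$ via a composition of wall-crossing functors indexed by a reduced expression for $v$, with an overall Tate twist controlled by $\delta^*_\lambda$. Applying $\pi_*$ to this description and combining with the base case yields parts (1) and (2), after matching up $\dom\lambda = w_0(v\lambda)$ and tracking twists.

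The most delicate step, and the main obstacle, is the vanishing $\pi_*\tfL_\lambda = 0$ for $\lambda \notin -\bXp$. My approach is induction on $\delta_\lambda$: for the inductive step I would choose a simple reflection $s$ with $\delta_{s\lambda} < \delta_\lambda$ and use a short exact sequence in the exotic heart (arising from the wall-crossing functor through $s$) relating $\tfL_\lambda$ to $\tfL_{s\lambda}$ together with an auxiliary object whose pushforward is computed by parts (1) and (2); pushing forward and invoking the inductive hypothesis would then produce the required cancellation. Conceptually, this vanishing reflects the fact that $\pi_*$ folds the $|W|$ exotic simples lifting a given $W$-orbit into a single simple object on $\cN$ labeled by the dominant (equivalently, $w_0$ applied to the antidominant) representative.
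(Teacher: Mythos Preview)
The paper does not give a direct argument here: it simply invokes \cite[Proposition~2.6]{achar-vogan}, so your proposal is necessarily more detailed than what appears in the text. Your reduction to the antidominant case for $\tnabla_\lambda$ and $\tDelta_\lambda$ is the right idea, and it is essentially how the cited result is proved; note, however, that the step ``applying $\pi_*$ to this description'' is exactly where the work lies. One needs to know that the braid/wall-crossing functors on $\tcN$ become trivial (up to Tate twist) after $\pi_*$, which is the substantive content of \cite[Proposition~2.6]{achar-vogan} rather than something one gets for free.

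There is a genuine gap in your treatment of $\pi_*\tfL_\lambda$. The inductive scheme you outline presupposes a short exact sequence in the exotic heart relating $\tfL_\lambda$ and $\tfL_{s\lambda}$ with a third term whose pushforward you control via parts (1) and (2). No such sequence is available in general: wall-crossing functors produce extensions among \emph{standard} and \emph{costandard} objects, not among simples, and the image of $\tfL_\lambda$ under wall-crossing is typically a complicated object with many composition factors, not $\tfL_{s\lambda}$ up to an extension by a single (co)standard. (Also, the induction parameter should be $\delta^*_\lambda$, since the base case $\lambda\in -\bX^+$ is characterized by $\delta^*_\lambda=0$, not $\delta_\lambda=0$.) The argument that actually works, and that underlies the cited proposition, is of a different nature: one shows that $\pi_*$ is $t$-exact from the exotic $t$-structure on $\tcN$ to the perverse-coherent $t$-structure on $\cN$, so $\pi_*\tfL_\lambda$ is either $0$ or simple; then the antidominant case already accounts for every simple $\fL_\mu$ (by the very definition $\fL_\mu = \pi_*\tfL_{w_0\mu}$), and a Grothendieck-group comparison forces the remaining $\pi_*\tfL_\lambda$ to vanish. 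Your final sentence captures this intuition correctly, but the mechanism you propose to implement it does not.
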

\begin{proof}
This is a consequence of \cite[Proposition~2.6]{achar-vogan} (see Remark~\ref{rmk:twist-conventions} on 
how our Tate twist compares with the one in \cite{achar-vogan}). 
\end{proof}

\begin{lem}\label{lem:n-hom-finite}
For any $\cF, \cG \in \Db\Coh^{G \times \Gm}(\tcN_I)$ or $\Db\Coh^{G \times \Gm}(\cN)$, the space
\[
\bigoplus_{k \in \Z} \Hom(\cF,\cG\la k\ra) 
\]
is finite-dimensional.
\end{lem}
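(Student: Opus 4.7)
Our plan is to translate the problem into one about finitely generated graded equivariant modules and then apply invariant theory. Via the equivalence~\eqref{eqn:restric-equiv} and the fact that $\cN$ is affine, we identify
\[
\Coh^{G\times\Gm}(\tcN_I) \simeq \Sym(\fn_I^*)\text{-mod}^{P_I\times\Gm}_{\mathrm{fg}}, \quad \Coh^{G\times\Gm}(\cN) \simeq \bk[\cN]\text{-mod}^{G\times\Gm}_{\mathrm{fg}}.
\]
Let $R$ and $H$ denote the corresponding ring and group in either setting. Since $\Gm$ is linearly reductive, summing over Tate twists is the same as forgetting the $\Gm$-equivariance:
\[
\bigoplus_{k\in\Z} \Hom_{\Db\Coh^{H\times\Gm}}(\cF, \cG\la k\ra) \cong \Hom_{\Db\Coh^{H}}(\cF, \cG).
\]

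Next, we reduce to finite-rank equivariant free modules. Any object of $\Db(R\text{-mod}^{H\times\Gm}_{\mathrm{fg}})$ admits a resolution by modules of the form $R \otimes V$ with $V$ a finite-dimensional $H\times\Gm$-representation. For $\tcN_I$, the ring $R = \Sym(\fn_I^*)$ is regular, so a bounded such resolution exists; for $\cN$ the resolution may be infinite, but only a bounded initial segment contributes to $\Hom$ in any single cohomological degree. It therefore suffices to show that
\[
\bigoplus_{k} \Hom^{H\times\Gm}_R(R\otimes V,\, (R\otimes W)\la k\ra) \;\cong\; (R \otimes V^* \otimes W)^{H}
\]
is finite-dimensional for finite-dimensional $H\times\Gm$-representations $V, W$.

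Finally, we verify this finite-dimensionality. In the $\tcN_I$ case, $(R \otimes V^* \otimes W)^{P_I} \subseteq (R \otimes V^* \otimes W)^T$; since $B$ corresponds to the negative roots, the $T$-weights of $R = \Sym(\fn_I^*)$ are non-negative integer combinations of positive roots outside $\Phi_I^+$, all lying strictly in a single open half-space of $\bX\otimes\R$ (for instance, the one cut out by $\rho^\vee$). As $V^* \otimes W$ has only finitely many $T$-weights, only finitely many graded pieces of $R$ can contribute $T$-invariants, each being finite-dimensional. In the $\cN$ case, $R^G = \bk$ because all elements of $\cN$ are nilpotent (so positive-degree Chevalley invariants vanish), and the Hilbert--Haboush--Nagata finiteness theorem for the reductive group $G$ yields that $(R \otimes V^* \otimes W)^G$ is finitely generated over $R^G = \bk$, hence finite-dimensional. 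The main technical obstacle will be the resolution step for $\cN$, as $\bk[\cN]$ has infinite global dimension: one must verify that a bounded portion of an unbounded equivariant free resolution computes $\Hom$ in a given cohomological degree, and that passing to the equivariant derived category introduces no further complications in positive characteristic.
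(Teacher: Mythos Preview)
Your approach has a genuine gap in positive characteristic. The reduction to generators of the form $R\otimes V$ does not by itself compute derived $\Hom$'s, because these objects are not projective (or $\Hom$-acyclic) in the equivariant category when $H$ fails to be linearly reductive. Concretely, to pass from arbitrary $\cF,\cG$ to such generators you need
\[
\bigoplus_{k\in\Z}\Ext^n_{R\text{-mod-}H}(R\otimes V,\,R\otimes W\la k\ra)
\]
to be finite-dimensional for \emph{every} $n$, not just $n=0$. Since $R\otimes V$ is free over $R$, this $\Ext^n$ identifies with the rational cohomology $H^n(H,\,R\otimes V^*\otimes W)$. Your invariant-theory arguments (the weight-cone for $P_I$, and $\bk[\cN]^G=\bk$ plus Haboush--Nagata for $G$) handle $n=0$, but for $n>0$ you are asking for finiteness of higher group cohomology with coefficients in an \emph{infinite-dimensional} module, and neither argument applies there. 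Your last sentence flags precisely this point (``passing to the equivariant derived category introduces no further complications in positive characteristic''), but this is not a routine verification: it is the heart of the matter, and you have not supplied an argument for it.

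The paper avoids this entirely by choosing different generators. On $\tcN_I$ it takes $\cF=\tDelta_{I,\lambda}[m]$ and $\cG=\tnabla_{I,\mu}[n]$; these come from a graded exceptional set, so Lemma~\ref{lem:pre-exc-basic} forces $\bigoplus_k\Hom(\cF,\cG\la k\ra)$ to be at most one-dimensional. On $\cN$ it takes $\oDelta_\lambda$ and $\onabla_\mu$; for $\lambda\neq\mu$ the same orthogonality applies, and for $\lambda=\mu$ the adjunction $\pi^*\dashv\pi_*$ (with a truncation to stay in the bounded category) reduces the question back to $\tcN$. This route never touches higher rational cohomology of $G$ or $P_I$ directly.
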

\begin{proof}
It is enough to prove this when $\cF$ and $\cG$ are shifts of objects belonging to some set that generates the given triangulated category (under Tate twist).  For $\tcN_I$, we may take $\cF = \tDelta_{I,\lambda}[n]$ and $\cG = \tnabla_{I,\mu}[m]$ (cf.~\cite[Proposition~4.4]{achar-hardesty-riche}). Since these objects come from an exceptional set, Lemma~\ref{lem:pre-exc-basic} tells us that the sum $\bigoplus_k \Hom(\cF,\cG\la k\ra)$ is at most $1$-dimensional.

For $\cN$, take $\cF = \oDelta_\lambda[n]$ and $\cG = \onabla_\mu[m]$.  According to~\cite[Proposition~6.1]{achar-perv}, these objects come from a quasi-exceptional set, so if $\lambda \ne \mu$, then Lemma~\ref{lem:pre-exc-basic} says that our $\Hom$-space vanishes.  On the other hand, if $\lambda = \mu$, we have
\begin{multline*}
\Hom(\oDelta_\lambda[m], \onabla_\lambda[n]\la k\ra)
\cong \Hom(\pi_*\tDelta_{w_0\lambda}, \pi_*\tnabla_{w_0\lambda}[n-m]\la k\ra) \\
\cong \Hom(\pi^*\pi_*\tDelta_{w_0\lambda}, \tnabla_{w_0\lambda}[n-m]\la k\ra).
\end{multline*}
Since $\pi$ is not smooth, the derived coherent pullback functor $\pi^*$ takes values in $D^-\Coh^{G\times\Gm}(\tcN)$, and not in the bounded derived category.  But of course $\tnabla_{w_0\lambda}[n-m]\la k\ra$ is a bounded complex, so there is some integer $N$ such that
\[
\Hom(\oDelta_\lambda[m], \onabla_\lambda[n]\la k\ra) \cong \Hom(\tau^{\ge N} \pi^*\pi_*\tDelta_{w_0\lambda}, \tnabla_{w_0\lambda}[n-m]\la k\ra).
\]
The right-hand side is now a $\Hom$-group in $\Db\Coh^{G \times \Gm}(\tcN)$.  The direct sum of these over all $k$ is finite-dimensional by the previous paragraph.
\end{proof}

\begin{lem}\label{lem:end-onabla}
If $\Hom(\onabla_\lambda, \onabla_\lambda[n]\la k\ra) \ne 0$, then one of the following holds:
\begin{enumerate}
\item $n = k = 0$, or
\item $n > 0$ and $k \le -2n$.
\end{enumerate}
\end{lem}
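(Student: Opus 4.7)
The plan is to reduce the self-Hom computation on $\cN$ to a calculation on $\tcN$ via Grothendieck duality for the crepant Springer resolution, and then to extract the vanishing by tracking the $\Gm$-grading.

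First, Lemma~\ref{lem:exotic-push} (applied at $w_0\lambda$, where $\lambda\in\bX^+$ so that $\delta_\lambda=0$ and $\dom(w_0\lambda)=\lambda$) gives $\onabla_\lambda\cong\pi_*\tnabla_{w_0\lambda}$. Equivalently, $\onabla_\lambda = \ocA_\lambda\la-\delta^*_\lambda\ra$ with $\ocA_\lambda = \pi_*\cO_{\tcN}(\lambda)$, and since the two Tate twists cancel in any self-Hom, it suffices to show the analogous bigraded vanishing for $\Ext^n_{\cN}\bigl(\pi_*\cO_{\tcN}(\lambda),\,\pi_*\cO_{\tcN}(\lambda)\la k\ra\bigr)$.

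Next, the Springer resolution is crepant: both $\omega_{\tcN}$ and $\omega_\cN$ are isomorphic to the structure sheaf with the same Tate twist (coming from the symplectic form of $\Gm$-weight $-2r$, where $r=\dim G/B$). Hence $\omega_\pi\cong\cO_{\tcN}$ and $\pi^!\cong L\pi^*$. Grothendieck duality and the projection formula then give
\[
R\Hom_{\cN}(\ocA_\lambda,\ocA_\lambda) \cong R\Gamma\bigl(\cN,\, \ocA_\lambda\otimes^{L} R\pi_*\cO_{\tcN}(-\lambda)\bigr),
\]
and since $\cN$ is affine, the $R\Gamma$ here reduces to taking plain global sections of the complex.

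Finally, the bigraded vanishing follows from analyzing the $\Gm$-weights on both tensor factors. Because $\Gm$ acts on $\fn$ with weight $-2$, the algebra $\Sym(\fn^*)$ is concentrated in non-negative even weights, with $\Sym^j(\fn^*)$ in weight $+2j$; and the higher direct image $R^i\pi_*\cO_{\tcN}(-\lambda)$ can be computed via the spectral sequence associated to the vector-bundle projection $\tcN\to G/B$ followed by $G/B\to\mathrm{pt}$, so that a Borel--Weil--Bott analysis on each $\Sym^j(\fn^*)\otimes\cO_{G/B}(-\lambda)$ shows that any nonzero contribution in cohomological degree $i>0$ must be accompanied by a $\Gm$-weight shift of at least $2i$. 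Combining this with the non-negative grading on $\ocA_\lambda$ via the hypercohomology spectral sequence, only the bigraded pieces with $(n,k)=(0,0)$ or with $n>0$ and $k\le -2n$ can contribute.

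The main obstacle is making precise the ``$\ge 2i$'' weight bound on $R^i\pi_*\cO(-\lambda)$: this requires a careful Borel--Weil--Bott analysis in positive characteristic, possibly reformulated via Koszul duality. A secondary subtlety is aligning the Tate twist sign conventions (which are opposite to those of earlier work, as Remark~\ref{rmk:twist-conventions} emphasizes) so that the final inequality reads $k\le -2n$, matching the statement of the lemma.
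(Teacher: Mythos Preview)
Your reduction via crepancy of $\pi$ to the identity
\[
R\Hom_{\cN}(\ocA_\lambda,\ocA_\lambda)\cong R\Gamma\bigl(\cN,\ \ocA_\lambda\otimes^L_{\cO_\cN} R\pi_*\cO_{\tcN}(-\lambda)\bigr)
\]
is correct, but the argument stalls exactly where you say it does, and the obstacle is not merely technical. The assertion that $R^i\pi_*\cO_{\tcN}(-\lambda)$ contributes only in $\Gm$-weight $\ge 2i$ amounts to the vanishing $H^i\bigl(G/B,\ \Sym^j(\fn^*)\otimes\cO(-\lambda)\bigr)=0$ for $i>j$. In characteristic~$0$ one can extract this from Borel--Weil--Bott (it is already a nontrivial combinatorial check), but in positive characteristic BWB fails, and no replacement is offered; your suggestion of ``Koszul duality'' does not point to a concrete argument. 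Even granting that bound, the second half of your sketch is incomplete: the derived tensor product is taken over $\cO_\cN$ with $\cN$ singular, so passing from the bigraded bounds on the two factors to a bound on $H^n$ of the tensor product requires controlling the $\Gm$-weights appearing in $\Tor_p^{\bk[\cN]}(\ocA_\lambda,-)$, and there is no reason these cannot drift by more than $2p$.

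The paper avoids both difficulties. Rather than computing on $\tcN$, it stays on $\cN$ and replaces $\onabla_\lambda$ in the first variable by the free module $\cO_\cN\otimes\weyl(\lambda)$, for which $\Hom$ to anything is a single (computable) graded piece; the key input is the filtration \cite[Lemma~5.4]{achar-perv} expressing $\cO_\cN\otimes\weyl(\lambda)$ as an iterated extension of $\ocA_\nu$'s with $\ocA_\lambda$ on top. Passing to the quotient $\fD_{\le\lambda}/\fD_{<\lambda}$ collapses the other $\ocA_\nu$'s to copies of $\onabla_\lambda\la-2r\ra$ with $r>0$, yielding a distinguished triangle $\cF'\to\cF\to\onabla_\lambda\to$ with $\Hom(\cF,\onabla_\lambda[n]\la k\ra)$ known exactly and $\cF'$ filtered by strictly negatively twisted copies of $\onabla_\lambda$. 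The inequality $k\le -2n$ then falls out of the long exact sequence by induction on~$n$. No cohomology vanishing on $G/B$ and no $\Tor$ over $\bk[\cN]$ is needed, which is why the argument works uniformly in good characteristic.
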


Before proving this lemma, we recall some facts about $\Db\Coh^{G \times \Gm}(\cN)$ from~\cite{achar-perv}.  As noted above,~\cite[Proposition~6.1]{achar-perv} says that the set $\{\onabla_\lambda\}_{\lambda \in \bXp}$ is a dualizable graded quasi-exceptional with respect to $\la 1\ra$.  As in~\S\ref{ss:pre-excep}, we can consider the following subcategories of $\Db\Coh^{G \times \Gm}(\cN)$, defined with respect to $\la 1 \ra$:
\[
\fD_{\le \lambda} = \lgen \onabla_\mu : \text{$\mu \in \bXp$, $\mu \le \lambda$} \rgen_\tri, \qquad
\fD_{< \lambda} = \lgen \onabla_\mu : \text{$\mu \in \bXp$, $\mu < \lambda$} \rgen_\tri.
\]
Apply Lemma~\ref{lem:recolle-exc} to obtain a recollement diagram.  We denote the functors involving the quotient $\fD_{\le \lambda}/\fD_{< \lambda}$ as follows:
\[
\begin{tikzcd}[column sep=large]
\fD_{\le \lambda} \ar[r, "\Pi_\lambda" description]  &
\fD_{\le \lambda}/\fD_{<\lambda}. \ar[l, bend right, "\pil_\lambda"'] \ar[l, bend left, "\pir_\lambda" ]
\end{tikzcd}
\]
It follows from~\cite[Lemma~5.3]{achar-perv} that for all $\nu \in \bX$, we have
\[
\ocA_\nu \in \fD_{\le \dom(\nu)}
\qquad\text{and}\qquad
\Pi_{\dom(\nu)}(\ocA_\nu) \cong \Pi_{\dom(\nu)}(\ocA_{\dom(\nu)}\la -2\delta_\nu\ra).
\]
For $\lambda \in \bXp$ and $\nu \in \bXp$, we thus have
\begin{equation}\label{eqn:pi-aj}
\Pi_\lambda(\ocA_\nu) \cong
\begin{cases}
\Pi_\lambda(\ocA_\lambda\la -2\delta_\nu \ra) & \text{if $\nu \in W\lambda$,} \\
0 & \text{if $\dom(\nu) \le \lambda$ and $\nu \notin W\lambda$.}
\end{cases}
\end{equation}
We are now ready to prove Lemma~\ref{lem:end-onabla}.

\begin{proof}[Proof of Lemma~\ref{lem:end-onabla}]
Let $\weyl(\lambda)$ be the Weyl module for $G$ with highest weight $\lambda$, and consider the coherent sheaf $\cO_\cN \otimes \weyl(\lambda) \in \Coh^{G \times \Gm}(\cN)$.  According to~\cite[Lemma~5.4]{achar-perv}, we have
\begin{equation}\label{eqn:free-aj}
\cO_\cN \otimes \weyl(\lambda) \in \ocA_{\nu_1} * \cdots * \ocA_{\nu_k} * \ocA_\lambda
\end{equation}
for some weights $\nu_1, \ldots, \nu_k \in \bX$ that satisfy $\dom(\nu_i) \le \lambda$ and $\nu_i \ne \lambda$ for all $i$.  In particular, this shows that $\cO_\cN \otimes \weyl(\lambda)$ lies in $\fD_{\le \lambda}$, so it makes sense to apply $\Pi_\lambda$ to it. Let
\[
\cG = \Pi_\lambda(\cO_\cN \otimes \weyl(\lambda))\la -\delta^*_\lambda\ra.
\]
Combining~\eqref{eqn:free-aj} with~\eqref{eqn:pi-aj}, we find that $\cG$ belongs to a category of the form $\Pi_\lambda(\onabla_\lambda)\la -2r_1\ra * \Pi_\lambda(\onabla_\lambda)\la -2r_2\ra * \cdots * \Pi_\lambda(\onabla_\lambda)\la -2r_k\ra * \Pi_\lambda(\onabla_\lambda)$, where $r_1, \ldots, r_k$ are some positive integers.  Thus, there is a distinguished triangle
\[
\cG' \to \cG \to \Pi_\lambda(\onabla_\lambda)
\]
with
\[
\cG' \in \Pi_\lambda(\onabla_\lambda)\la -2r_1\ra * \Pi_\lambda(\onabla_\lambda)\la -2r_2\ra * \cdots * \Pi_\lambda(\onabla_\lambda)\la -2r_k\ra.
\]

The proof of Lemma~\ref{lem:recolle-exc} shows that $\pir_\lambda(\Pi_\lambda(\onabla_\lambda)) \cong \onabla_\lambda$.  We therefore have
\begin{multline*}
\Hom(\cO_\cN \otimes \weyl(\lambda)\la -\delta^*_\lambda\ra, \onabla_\lambda[n]\la k\ra) \\
\cong \Hom(\cO_\cN \otimes \weyl(\lambda)\la -\delta^*_\lambda\ra, \pir_\lambda(\Pi_\lambda(\onabla_\lambda))[n]\la k\ra) 
\cong \Hom(\cG, \Pi_\lambda(\onabla_\lambda)[n]\la k\ra) \\
\cong \Hom(\pir_\lambda(\cG), \pir_\lambda\Pi_\lambda(\onabla_\lambda)[n]\la k\ra)
\cong \Hom(\pir_\lambda(\cG), \onabla_\lambda[n]\la k\ra),
\end{multline*}
where the penultimate step uses the fact that $\pir_\lambda$ is fully faithful (\cite[(1.4.3.5)]{bbd}).

Let $\cF = \pir_\lambda(\cG)$.  Combining the calculation above with~\cite[Lemma~5.5(2)]{achar-perv}, we have 
\begin{equation}\label{eqn:hom-f-nabla}
\Hom(\cF, \onabla_\lambda[n]\la k\ra)
\cong
\begin{cases}
\bk & \text{if $n = k = 0$,} \\
0 & \text{otherwise.}
\end{cases}
\end{equation}

Next, let $\cF' = \pir_\lambda(\cG')$, so that we have a distinguished triangle
\[
\cF' \to \cF \to \onabla_\lambda \to
\]
where
\begin{equation}\label{eqn:fprime-filt}
\cF' \in \onabla_\lambda\la -2r_1\ra * \onabla_\lambda\la -2r_2\ra * \cdots * \onabla_\lambda\la -2r_k\ra.
\end{equation}
This distinguished triangle gives rise to a long exact sequence
\begin{multline}\label{eqn:nabla-les}
\cdots \to
\Hom(\cF', \onabla_\lambda[n-1]\la k\ra) \to 
\Hom(\onabla_\lambda, \onabla_\lambda[n]\la k\ra) \\ \to 
\Hom(\cF, \onabla_\lambda[n]\la k\ra) \to \cdots
\end{multline}

Let us first take $n$ to be the smallest integer such that $\Hom(\onabla_\lambda, \onabla_\lambda[n]\la k\ra) \ne 0$ for some $k \in \Z$.  In view of~\eqref{eqn:fprime-filt}, the first term in~\eqref{eqn:nabla-les} vanishes.  The second term is assumed to be nonzero, so the third term must be as well.  But by~\eqref{eqn:hom-f-nabla}, this implies that $n = k = 0$.  In particular, we have $\Hom(\onabla_\lambda, \onabla_\lambda[n]\la k\ra) = 0$ if $n < 0$.

Assume henceforth that $n \ge 0$.  We will prove by induction on $n$ that if $\Hom(\onabla_\lambda, \onabla_\lambda[n]\la k\ra) \ne 0$, then $k \le -2n$.  The case $n = 0$ has already been covered by the previous paragraph.  Suppose now that $n > 0$, and let $k$ be the largest integer such that $\Hom(\onabla_\lambda, \onabla_\lambda[n]\la k\ra) \ne 0$.  (Such a $k$ exists by Lemma~\ref{lem:n-hom-finite}.) The third term of~\eqref{eqn:nabla-les} vanishes (by~\eqref{eqn:hom-f-nabla}), so in order for the second term to be nonzero, the first term must also by nonzero.  By induction and~\eqref{eqn:fprime-filt}, we must have
\[
k + 2r_i \le -2(n-1)
\]
for some $i$.  That is, $k \le -2n+2(1-r_i)$.  Since $1 - r_i \le 0$, we conclude that $k \le -2n$.
\end{proof}

We can now prove a complement to Lemma~\ref{lem:n-hom-finite}.

\begin{lem}\label{lem:n-hom-finite-2}
For any $\cF, \cG \in \Db\Coh^{G \times \Gm}(\tcN_I)$ or $\Db\Coh^{G \times \Gm}(\cN)$, the space
\[
\bigoplus_{k \in \Z} \Hom(\cF,\cG\{ k\}) 
\]
is finite-dimensional.
\end{lem}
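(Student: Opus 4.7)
\textit{Proof proposal.} Using $\{k\} = \la -k\ra[k]$, the sum in question runs along an ``antidiagonal line'' in the bigrading of $\Hom$ by cohomological shift and $\la 1\ra$-twist. My plan is to prove the apparently stronger statement that for every $m \in \Z$ the space
\[
\Phi_m(\cF, \cG) := \bigoplus_{k \in \Z} \Hom(\cF, \cG[m]\{k\})
\]
is finite-dimensional; the lemma is then the case $m = 0$. The advantage of this reformulation is that shifting $\cG$ by $[1]$ or $\la 1\ra$ only reindexes the family: a direct computation from $\{1\} = \la -1\ra[1]$ yields
\[
\Phi_m(\cF, \cG[1]) = \Phi_m(\cF, \cG\la 1\ra) = \Phi_{m+1}(\cF, \cG), \qquad \Phi_m(\cF, \cG\{1\}) = \Phi_m(\cF, \cG),
\]
with analogous identities in the $\cF$ variable. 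Combined with the long exact sequence of $\Hom$ applied termwise in $k$ to a distinguished triangle $\cG' \to \cG \to \cG''$ (which gives $\dim \Phi_m(\cF, \cG) \le \dim \Phi_m(\cF, \cG') + \dim \Phi_m(\cF, \cG'')$), the class of pairs $(\cF, \cG)$ for which all $\Phi_m$ are finite-dimensional forms a triangulated subcategory in either variable that is stable under both Tate twists. By Proposition~\ref{prop:cotangent-exc} and Lemma~\ref{lem:pre-exc-basic}\eqref{it:exc-delta-gen} (respectively by \cite[Proposition~6.1]{achar-perv} in the $\cN$ case), it therefore suffices to verify the claim when $\cF$ and $\cG$ are generators.

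For $\Db\Coh^{G\times\Gm}(\tcN_I)$, I take $\cF = \tDelta_{I,\lambda}$ and $\cG = \tnabla_{I,\mu}$. Since $\{\tnabla_{I,\lambda}\}$ is graded exceptional with respect to $\{1\}$ by Proposition~\ref{prop:cotangent-exc}, combining Lemma~\ref{lem:pre-exc-basic}\eqref{it:delta-nabla-orth} with~\eqref{it:exc-delta-end} shows that $\Hom(\tDelta_{I,\lambda}, \tnabla_{I,\mu}\{k\}[m])$ is nonzero only for $\lambda = \mu$ and $k = m = 0$, and then is one-dimensional; so each $\Phi_m$ is at most one-dimensional on these generators.

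For $\Db\Coh^{G\times\Gm}(\cN)$, I take $\cF = \oDelta_\lambda$ and $\cG = \onabla_\mu$. If $\lambda \ne \mu$, Lemma~\ref{lem:pre-exc-basic}\eqref{it:delta-nabla-orth} makes every summand vanish. If $\lambda = \mu$, Lemma~\ref{lem:pre-exc-basic}\eqref{it:exc-delta-end} identifies $\Hom(\oDelta_\lambda, \onabla_\lambda[m]\{k\})$ with $\Hom(\onabla_\lambda, \onabla_\lambda[k+m]\la -k\ra)$, and Lemma~\ref{lem:end-onabla} forces this to vanish unless either $k = m = 0$, or else $k + m > 0$ together with $-k \le -2(k+m)$, i.e., $-m < k \le -2m$ (which requires $m < 0$). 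For each fixed $m$ this confines $k$ to a finite (possibly empty) interval, and each individual Hom is finite-dimensional by Lemma~\ref{lem:n-hom-finite}; hence $\Phi_m(\oDelta_\lambda, \onabla_\lambda)$ is finite-dimensional.

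The main obstacle is the strengthening in the first paragraph: closure of the original single sum under $[1]$ or $\la 1\ra$ alone would move the summation to a different antidiagonal, so the single-$m$ statement is not self-closed under shifts; passing to the whole family indexed by $m$ is precisely what makes the dévissage go through, after which everything reduces to the vanishing statements already in hand.
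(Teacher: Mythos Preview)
Your proof is correct and follows essentially the same route as the paper: both reduce to (co)standard generators, handle $\tcN_I$ via the exceptional set, and for $\cN$ with $\lambda = \mu$ invoke Lemma~\ref{lem:pre-exc-basic}\eqref{it:exc-delta-end} and Lemma~\ref{lem:end-onabla} to confine $k$ to the finite interval $-m < k \le -2m$. Your explicit introduction of the family $\Phi_m$ is just a cleaner bookkeeping device for the reduction the paper performs by allowing arbitrary shifts $\cF = \oDelta_\lambda[m]$, $\cG = \onabla_\mu[n]$ from the outset.
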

\begin{proof}
For $\tcN_I$, the proof of Lemma~\ref{lem:n-hom-finite} can be repeated verbatim.  For $\cN$, we may assume that $\cF = \oDelta_\lambda[m]$ and $\cG = \onabla_\mu[n]$.  If $\lambda \ne \mu$, the reasoning in Lemma~\ref{lem:n-hom-finite} applies again.  If $\lambda = \mu$, then by Lemma~\ref{lem:pre-exc-basic}, we may replace $\cF$ by $\onabla_\lambda[m]$.  By Lemma~\ref{lem:end-onabla}, the space
\[
\Hom(\onabla_\lambda[m], \onabla_\lambda[n]\{k\}) = \Hom(\onabla_\lambda, \onabla_\lambda[n-m+k]\la -k\ra)
\]
vanishes unless
\[
n-m+k = k = 0
\qquad\text{or}\qquad
\begin{cases}
n-m+k > 0, \\
-k \le -2n+2m -2k.
\end{cases}
\]
The latter condition is equivalent to $m - n < k \le 2m - 2n$.  In particular, only finitely many integers $k$ satisfy this condition, so our sum is finite-dimensional.
\end{proof}

\begin{prop}\label{prop:nilpotent-exc}
The set of all $\onabla_\lambda$ with $\lambda \in \bXp$ forms a dualizable graded pre-exceptional set in $\Db\Coh^{G\times \Gm}(\cN)$ with respect to both $\la 1\ra$ and $\{1 \}$, and its dual set consists of all
 $\oDelta_\lambda$ with $\lambda \in \bXp$.  Moreover, this collection is:
\begin{enumerate}
\item graded quasi-exceptional with respect to $\la 1\ra$, and
\item graded co-quasi-exceptional with respect to $\{1\}$.
\end{enumerate}
\end{prop}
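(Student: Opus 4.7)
The plan is to deduce everything from the already-established case of $\la 1\ra$ together with Lemma~\ref{lem:end-onabla}. The $\la 1\ra$ part of the proposition, including statement~(1), is essentially~\cite[Proposition~6.1]{achar-perv}: it asserts that $\{\onabla_\lambda\}_{\lambda \in \bXp}$ is a dualizable graded quasi-exceptional set with respect to $\la 1\ra$, with dual set $\{\oDelta_\lambda\}_{\lambda \in \bXp}$. So only the $\{1\}$-versions of the axioms remain to be checked.

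For the pre-exceptional axioms with respect to $\{1\}$, note that $\{n\} = \la -n\ra [n]$, so for any objects $\cF, \cG$,
\[
\bigoplus_{n,i \in \Z} \Hom(\cF, \cG\{n\}[i]) = \bigoplus_{m,k \in \Z} \Hom(\cF, \cG\la k\ra[m]).
\]
Hence axiom~\eqref{it:exc-nabla-hom} of Definition~\ref{defn:excep} (orthogonality across strata) and axiom~\eqref{it:exc-nabla-gen} (generation under Tate twists and cohomological shifts) for $\{1\}$ are automatically equivalent to their $\la 1\ra$-versions. The same rewriting shows that the vanishing condition in Definition~\ref{defn:dualizable}\eqref{it:dual-hom} for $\oDelta_s$ with respect to $\{1\}$ is equivalent to that for $\la 1\ra$; the cone condition~\eqref{it:dual-cone} does not depend on the choice of Tate twist, so dualizability transfers to $\{1\}$ as well.

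It remains to verify axiom~\eqref{it:exc-nabla-end} with respect to $\{1\}$ and the co-quasi-exceptional condition~$(4^{-})$. Both are computations of
\[
\Hom(\onabla_\lambda, \onabla_\lambda\{n\}[i]) = \Hom\bigl(\onabla_\lambda, \onabla_\lambda[n+i]\la -n\ra\bigr),
\]
which we analyze via Lemma~\ref{lem:end-onabla}. For axiom~\eqref{it:exc-nabla-end} we take $i = 0$: the lemma forces either $(n+i, -n) = (0,0)$, i.e.\ $n = 0$ giving $\bk$, or $n+i > 0$ and $-n \le -2(n+i)$; the latter becomes $n \le -2n$, i.e.\ $n \le 0$, contradicting $n > 0 = -i$. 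For the co-quasi-exceptional axiom we take $i > 0$: the first alternative $(n+i = 0 = -n)$ forces $i = 0$, a contradiction, while the second alternative becomes $n+i > 0$ and $n + 2i \le 0$, so that $i > -n \ge 2i$, again a contradiction. Hence the Hom-space vanishes for $i > 0$, as required.

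The only real work in all of this lies in Lemma~\ref{lem:end-onabla}, which has already been established; after that the argument is a purely formal bookkeeping of how $\la 1\ra$ and $\{1\}$ interchange the Tate twist and cohomological shift. No further obstacle is expected.
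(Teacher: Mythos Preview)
Your proof is correct and follows essentially the same approach as the paper's: cite \cite[Proposition~6.1]{achar-perv} for the $\la 1\ra$ case, observe that the pre-exceptional and dualizability axioms transfer to $\{1\}$ because the relevant $\Hom$-vanishing and generation conditions range over all shifts and twists, and then use Lemma~\ref{lem:end-onabla} for the co-quasi-exceptional condition. You are in fact slightly more careful than the paper in one place: the paper asserts that pre-exceptionality for $\{1\}$ follows ``immediately'' from that for $\la 1\ra$, but axiom~\eqref{it:exc-nabla-end} for $\{1\}$ (the computation of $\Hom(\onabla_\lambda,\onabla_\lambda\{n\})$) does not literally reduce to the $\la 1\ra$ version and genuinely requires Lemma~\ref{lem:end-onabla}, as you explicitly verify.
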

\begin{proof}
As noted earlier, the fact that $\{\onabla_\lambda\}_{\lambda \in \bXp}$ is graded quasi-exceptional with respect to $\la 1\ra$ with dual set $\{\oDelta_\lambda\}_{\lambda \in \bXp}$ can be found in~\cite[Proposition~6.1]{achar-perv}.  Let us now show that this collection is graded co-quasi-exceptional with respect to $\{1\}$.  It is immediate (thanks to the $\la 1\ra$ case) that $\{\onabla_\lambda\}_{\lambda \in \bXp}$ satisfies conditions~\eqref{it:exc-nabla-hom} and~\eqref{it:exc-nabla-gen} of Definition~\ref{defn:excep}, and that $\End(\onabla_\lambda) \cong \bk$.  It remains to show that
\[
\Hom(\onabla_\lambda, \onabla_\lambda[n]\{k\}) = 0
\qquad\text{if $n = 0$ and $k \ne 0$, or if $n > 0$ and $k$ is arbitrary.}
\]
By Lemma~\ref{lem:end-onabla}, if the space $\Hom(\onabla_\lambda, \onabla_\lambda[n]\{k\}) = \Hom(\onabla_\lambda, \onabla_\lambda[n+k]\la -k\ra)$ is nonzero, we either have $n = k = 0$ or $n+k > 0$ and $-k \le -2n -2k$.  The latter condition can be rewritten as $n > -k$ and $2n \le -k$.  This implies that $2n < n$, and hence $n < 0$.
\end{proof}

In view of Proposition~\ref{prop:nilpotent-exc}, we may consider the following notions:
\begin{enumerate}
\item The \emph{exotic $t$-structure}, also called the \emph{perverse-coherent $t$-structure}, obtained by applying Proposition~\ref{prop:preexcep-struc}\eqref{it:preexcep-t} to $\{ \onabla_\lambda\}_{\lambda \in \bXp}$ with respect to $\la 1\ra$.
\item The \emph{supportive co-$t$-structure}, obtained by applying Proposition~\ref{prop:preexcep-struc}\eqref{it:preexcep-cot} to $\{ \onabla_\lambda\}_{\lambda \in \bXp}$ with respect to $\{ 1\}$.
\end{enumerate}
In fact, the objects $\fL_\lambda$ are (up to $\la 1\ra$) precisely the simple objects in the heart of the exotic $t$-structure: see~\cite[Proposition~2.6]{achar-vogan}.  The following statement is an immediate consequence of Lemma~\ref{lem:exotic-push}.

\begin{cor}\label{cor:cot-exact-push}
The functor $\pi_*: \Db\Coh^{G\times \Gm}(\tcN) \rightarrow \Db\Coh^{G\times \Gm}(\cN)$ is co-t-exact 
with respect to the supportive co-$t$-structures.
\end{cor}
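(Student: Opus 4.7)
The plan is very direct: since the supportive co-$t$-structures on both sides are built from graded (co-)quasi-exceptional sets, Proposition~\ref{prop:preexcep-struc}(\ref{it:preexcep-cot}) describes each of the two positive/negative parts explicitly in terms of generators, and $\pi_*$ is triangulated and commutes with the Tate twist $\{1\}$ (because $\pi$ is $G\times\Gm$-equivariant). So to prove co-$t$-exactness it suffices to check, on the level of the generators, that
\[
\pi_*\bigl(\tnabla_\lambda[i]\bigr) \in \fD^{\cN}_{\le 0} \text{ for } i \ge 0,
\qquad
\pi_*\bigl(\tDelta_\lambda[i]\bigr) \in \fD^{\cN}_{\ge 0} \text{ for } i \le 0,
\]
for every $\lambda \in \bX$; this will give $\pi_*(\fD^{\tcN}_{\le 0}) \subset \fD^{\cN}_{\le 0}$ and $\pi_*(\fD^{\tcN}_{\ge 0}) \subset \fD^{\cN}_{\ge 0}$, and intersecting yields the statement that $\pi_*$ preserves cohearts.

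To carry this out, I would rewrite the Tate twist appearing in Lemma~\ref{lem:exotic-push} using the identity $\la k \ra = \{-k\}[k]$, obtaining
\[
\pi_*\tnabla_\lambda \cong \onabla_{\dom\lambda}\{-\delta^*_\lambda\}[\delta^*_\lambda],
\qquad
\pi_*\tDelta_\lambda \cong \oDelta_{\dom\lambda}\{\delta^*_\lambda\}[-\delta^*_\lambda].
\]
Because $\dom\lambda \in \bXp$ and $\delta^*_\lambda \ge 0$, for any $i \ge 0$ the object $\pi_*\tnabla_\lambda[i]$ is a Tate twist (by $\{-\delta^*_\lambda\}$) of the nonnegative cohomological shift $\onabla_{\dom\lambda}[i+\delta^*_\lambda]$, hence manifestly belongs to $\lgen \onabla_\mu[j] : \mu \in \bXp,\ j \ge 0 \rgen_\exto = \fD^{\cN}_{\le 0}$. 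Symmetrically, $\pi_*\tDelta_\lambda[i]$ for $i \le 0$ is a Tate twist of $\oDelta_{\dom\lambda}[i - \delta^*_\lambda]$ with $i - \delta^*_\lambda \le 0$, so it lies in $\fD^{\cN}_{\ge 0}$.

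There is essentially no obstacle here: the entire content is already encoded in Lemma~\ref{lem:exotic-push}, and the only thing one has to verify is that the sign of $\delta^*_\lambda$ points the cohomological shift in the correct direction after translating between the $\la 1\rangle$ and $\{1\}$ conventions. Once the two inclusions on generators are established, triangulated-ness of $\pi_*$, together with its compatibility with the Tate twist $\{1\}$ and its preservation of direct summands, promotes the inclusions to the full $\rgen_\exto$-closures, so that $\pi_*(\fD^{\tcN}_{\ge 0} \cap \fD^{\tcN}_{\le 0}) \subset \fD^{\cN}_{\ge 0} \cap \fD^{\cN}_{\le 0}$, which is the definition of co-$t$-exactness.
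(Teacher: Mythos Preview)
Your argument is correct and is exactly the unpacking of what the paper means by ``an immediate consequence of Lemma~\ref{lem:exotic-push}'': you check that $\pi_*$ sends the generators of the two halves of the supportive co-$t$-structure on $\tcN$ into the corresponding halves on $\cN$, using the explicit formulas for $\pi_*\tnabla_\lambda$ and $\pi_*\tDelta_\lambda$ together with the translation $\la k\ra = \{-k\}[k]$ and the nonnegativity of $\delta^*_\lambda$. There is no meaningful difference between your route and the paper's.
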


We will now get to the first major application of the theory of co-$t$-structures in this setting. 
\begin{thm}\label{thm:full-push}
If $\cF \in \Db\Coh^{G\times \Gm}(\tcN)_{\geq 0}$ and $\cG \in \Db\Coh^{G\times \Gm}(\tcN)_{\leq 0}$, then 
the natural homomorphism
\[
\Hom(\cF, \cG) \rightarrow \Hom(\pi_*\cF, \pi_*\cG)
\]
is surjective. 
\end{thm}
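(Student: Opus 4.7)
The plan is to reduce surjectivity to a direct calculation on generators of the two co-$t$-structure subcategories via a bi-variate dévissage. The key observation is that, by axiom~\eqref{ax:hom} of the co-$t$-structure (Definition~\ref{defn:cotstruc}), we have $\Hom(\cF, \cG[1]) = 0$ for any $\cF \in \Db\Coh^{G\times \Gm}(\tcN)_{\geq 0}$ and $\cG \in \Db\Coh^{G\times \Gm}(\tcN)_{\leq 0}$. By Corollary~\ref{cor:cot-exact-push}, the same vanishing holds for $\pi_*\cF$ and $\pi_*\cG$ on $\cN$. Consequently, applying $\Hom(-, \cG)$ (or $\Hom(\cF, -)$) to any distinguished triangle in $\Db\Coh^{G\times \Gm}(\tcN)_{\geq 0}$ (resp.~$\Db\Coh^{G\times \Gm}(\tcN)_{\leq 0}$) yields a right-exact four-term sequence, and the analogous statement holds after applying $\pi_*$.

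Given a triangle $\cF_1 \to \cF \to \cF_2$ with $\cF_i \in \Db\Coh^{G\times \Gm}(\tcN)_{\geq 0}$ and $\cG$ fixed, one obtains a commutative diagram of right-exact sequences whose vertical maps are induced by $\pi_*$. A standard diagram chase shows that surjectivity at the outer columns implies surjectivity in the middle. Since surjectivity is trivially preserved under Tate twists and direct summands, the description $\Db\Coh^{G\times \Gm}(\tcN)_{\geq 0} = \lgen \tDelta_\lambda[i] : \lambda \in \bX,\ i \leq 0 \rgen_\exto$ from Proposition~\ref{prop:preexcep-struc} reduces us to $\cF = \tDelta_\lambda[i]$ with $i \leq 0$. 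A symmetric dévissage in $\cG$ then reduces to $\cG = \tnabla_\mu[j]$ with $j \geq 0$.

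In this base case, Proposition~\ref{prop:cotangent-exc} gives $\Hom(\tDelta_\lambda[i], \tnabla_\mu[j]) = \bk$ when $\lambda = \mu$ and $i = j$ (which, under our constraints, forces $i = j = 0$), and $0$ otherwise. On the pushforward side, Lemma~\ref{lem:exotic-push} rewrites the target as $\Hom(\oDelta_{\dom \lambda}, \onabla_{\dom \mu}\la \delta^*_\lambda + \delta^*_\mu\ra[j-i])$, which by Lemma~\ref{lem:pre-exc-basic} and Lemma~\ref{lem:end-onabla}, together with nonnegativity of the $\delta^*$'s, vanishes outside the case $\lambda = \mu$, $i = j = 0$, $\delta^*_\lambda = 0$ (that is, $\lambda$ antidominant). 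In every other configuration the target vanishes, so surjectivity is automatic.

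The remaining case is when $\lambda$ is antidominant and $i = j = 0$, where both sides are one-dimensional over $\bk$. Here one must verify that $\pi_*$ carries the canonical morphism $\iota_\lambda : \tDelta_\lambda \to \tnabla_\lambda$ to a nonzero scalar multiple of the canonical morphism $\iota_{w_0\lambda} : \oDelta_{w_0\lambda} \to \onabla_{w_0\lambda}$. This compatibility of the canonical maps under the identifications of Lemma~\ref{lem:exotic-push} is the principal technical hurdle of the proof; it should follow by tracing through the constructions of these standard and costandard objects in~\cite{achar-riche,achar-vogan,bez-tilting}, ultimately reducing to the trivial case $\lambda = 0$, where $\iota_0$ is the identity of $\cO_\tcN$ and the claim is immediate from the canonical isomorphism $\pi_*\cO_\tcN \cong \cO_\cN$ coming from rational singularities of $\cN$.
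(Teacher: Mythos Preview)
Your overall strategy coincides with the paper's: a double d\'evissage on the co-$t$-structure generators, using the vanishing $\Hom(\cF,\cG[1])=0$ on both $\tcN$ and $\cN$ to run a four-lemma argument, then an explicit computation in the base case $\cF=\tDelta_\lambda[i]$, $\cG=\tnabla_\mu[j]$ (up to a Tate twist, which you suppress but which is easily absorbed and forces $n=0$ anyway). Your identification of the unique nonvanishing case---$\lambda=\mu$ antidominant, $i=j=0$---is correct and matches the paper's.

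The genuine gap is the last paragraph. You correctly isolate the crux as showing that $\pi_*(\iota_\lambda)\ne 0$ for $\lambda\in -\bXp$, but you then only assert that this ``should follow by tracing through the constructions~\dots\ ultimately reducing to the trivial case $\lambda=0$.'' No such reduction is carried out, and there is no evident mechanism for passing from a general antidominant $\lambda$ to $\lambda=0$: the objects $\tDelta_\lambda,\tnabla_\lambda$ for different $\lambda$ are not related by any operation that $\pi_*$ intertwines in the required way. The paper closes this gap by an entirely different and much shorter route: it factors the canonical map $\iota_\lambda$ through the simple exotic object $\tfL_\lambda$, so that $\pi_*\iota_\lambda$ factors as $\oDelta_{\dom\lambda}\to\pi_*\tfL_\lambda\to\onabla_{\dom\lambda}$. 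The third formula of Lemma~\ref{lem:exotic-push} says $\pi_*\tfL_\lambda\cong\fL_{\dom\lambda}$ (nonzero, since $\lambda\in -\bXp$), and this is precisely the simple perverse-coherent sheaf through which the canonical map $\oDelta_{\dom\lambda}\to\onabla_{\dom\lambda}$ factors; hence $\pi_*\iota_\lambda$ is nonzero. In short, the missing ingredient in your argument is the third part of Lemma~\ref{lem:exotic-push} on the pushforward of simple exotic sheaves, which lets you avoid any ``unwinding of constructions.''
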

\begin{proof}
We begin by defining two sets of objects $\tDelta, \, \tnabla \subset \Db\Coh^{G\times \Gm}(\tcN)$ by
\[
\begin{aligned}
\tDelta &= \{\tDelta_\lambda\{k\}[i] \mid \lambda \in \bX, \, i \leq 0, \, k \in \Z\},\\
\tnabla &= \{ \tnabla_\lambda\{k\}[i] \mid \lambda \in \bX, \, i \geq 0, \, k \in \Z\}.
\end{aligned}
\]
We will now use the $\uast$ operation from \S\ref{ss:tate-twist} with respect to $\{1\}$.  The description of the co-$t$-structure in Proposition~\ref{prop:preexcep-struc}\eqref{it:preexcep-cot} shows that
\begin{align*}
\Db\Coh^{G\times\Gm}(\tcN)_{\geq 0} &= \text{direct summands of $\bigcup_{n \ge 1} \tDelta^{\uast n}$,} \\
\Db\Coh^{G\times\Gm}(\tcN)_{\leq 0} &= \text{direct summands of $\bigcup_{n \ge 1} \tnabla^{\uast n}$.}
\end{align*}
Thus, it suffices to verify the surjectivity of the morphism 
\begin{equation}\label{eqn:natural-morphism}
\Hom(\cF, \cG) \rightarrow \Hom(\pi_*\cF, \pi_*\cG)
\end{equation}
with $\cF \in \tDelta^{\uast n}$ and $\cG \in \tnabla^{\uast m}$, for some $n,m \ge 1$. We will proceed by double induction with respect to $n$ and $m$. 

First suppose that $n=m=1$ and assume
$\cF = \tDelta_\lambda\{k_1\}[i_1]$ and $\cG = \tnabla_\mu\{k_2\}[i_2]$ with $\lambda,\mu \in \bX$, $k_1, k_2 \in \Z$, $i_1 \leq 0$ and $i_2 \geq 0$. Set $k = k_2 - k_1$ and $i = i_2-i_1 \geq 0$. We have 
\[
\Hom(\cF, \cG) = \Hom(\tDelta_\lambda,  \tnabla_\mu\{k\}[i]) = \begin{cases} 
	\bk & \text{if $\lambda = \mu$, $i = k = 0$},\\
	0 & \text{otherwise.}
	\end{cases}
\]
Observe that if ${\dom \lambda} \neq {\dom \mu}$, then both sides of \eqref{eqn:natural-morphism} are zero, so the map is trivially surjective. 
Now suppose that ${\dom \lambda} = {\dom \mu}$, and let $\delta = \delta^*_\mu + \delta^*_\lambda$.  Using Lemma~\ref{lem:exotic-push} and the fact that $\{\onabla_\lambda\}_{\lambda \in \bXp}$ is co-quasi-exceptional, we have 
\begin{equation*}\label{eqn:del-nabla-map}
\begin{aligned}
\Hom(\pi_*\cF, \pi_*\cG) &= 
   \Hom(\oDelta_{\dom \lambda}, \onabla_{\dom \lambda}\la \delta\ra\{k\}[i])\\
  &= \Hom(\oDelta_{\dom \lambda}, \onabla_{\dom \lambda}\{k-\delta\}[i+\delta])\\
  &= \begin{cases}
  	\bk & \text{if $k=\delta$, $i = -\delta$},\\
	0 & \text{otherwise (since $i + \delta \ge 0$)}.
	\end{cases}
  \end{aligned}
\end{equation*}
So it suffices to assume that $k= \delta = -i$ because the natural map will again be trivially surjective otherwise. 
But we have that both $i\geq 0$ and $\delta = \delta^*_\lambda + \delta^*_\mu\geq 0$, and thus,
$\delta = i = 0$. In particular, 
$\delta^*_\lambda = \delta^*_\mu = 0$ which further implies that $\lambda = \mu = w_0({\dom \lambda}) \in -\bX^+$. Hence,
\eqref{eqn:natural-morphism} is given by
\[
\Hom(\tDelta_\lambda,\tnabla_\lambda) \rightarrow \Hom(\oDelta_{\dom \lambda},\onabla_{\dom \lambda}).
\]
Any nonzero map $\tDelta_\lambda \to \tnabla_\lambda$ factors through $\tfL_\lambda = \tfL_{w_0(\dom \lambda)}$, and any nonzero map $\oDelta_{\dom \lambda} \to \onabla_{\dom \lambda}$ factors through $\fL_{\dom \lambda}$.  Lemma~\ref{lem:exotic-push} then implies that this map of $\Hom$-groups is nonzero, and hence an isomorphism
since both sides are isomorphic to $\bk$. We have established surjectivity of~\eqref{eqn:natural-morphism} in the case where $n = m = 1$.


Now suppose \eqref{eqn:natural-morphism} is surjective for $n =1$ and some $m\geq 1$. Let  
$\cF \in \tDelta$ and $\cG \in \tnabla^{\uast m+1}$ be arbitrary. Then
 $\cG$ fits into a distinguished triangle of the form
\[
\cG' \rightarrow \cG \rightarrow \cG'' \rightarrow,
\]
where $\cG' \in \tnabla^{\uast m}$ and $\cG'' \in \tnabla$. Applying $\Hom(\cF,-)$ and $\Hom(\pi_*\cF,-)$ we obtain the commutative diagram
\[
\hbox{\small$\begin{tikzcd}[column sep=small]
 \Hom(\cF, \cG') \ar[d, "f_1"] \ar[r] & \Hom(\cF, \cG) \ar[r] \ar[d, "f_2"] & \Hom(\cF, \cG'') \ar[r]\ar[d, "f_3"] & \Hom(\cF, \cG'[1]) \ar[d, "f_4"] \\
 \Hom(\pi_*\cF, \pi_*\cG') \ar[r] & \Hom(\pi_*\cF, \pi_*\cG) \ar[r] & \Hom(\pi_*\cF, \pi_*\cG'') \ar[r] & \Hom(\pi_*\cF, \pi_*\cG'[1]).
\end{tikzcd}$}
\]
Our inductive hypothesis implies that $f_1$ and $f_3$ are surjective, while Corollary~\ref{cor:cot-exact-push} and 
Definition~\ref{defn:cotstruc}(\ref{ax:hom}) imply 
\[
0 = \Hom(\cF, \cG'[1])  = \Hom(\pi_*\cF, \pi_*\cG'[1]).
\]
It then follows from the four lemma that $f_2$ is surjective, and so we are done with this case. 

Finally, fix $n \geq 1$ and assume that \eqref{eqn:natural-morphism} is surjective for any $m \geq 1$ with 
$\cF \in \tDelta^{\uast n}$ and $\cG \in \tnabla^{\uast m}$. Let $\cF \in \tDelta^{\uast n+1}$ and $\cG \in \tnabla^{\uast m}$ be arbitrary. 
Again, there is a distinguished triangle
\[
\cF' \rightarrow \cF \rightarrow \cF'' \rightarrow
\]
where $\cF' \in \tDelta^{\uast n}$ and $\cF'' \in \tDelta$. By applying $\Hom(-,\cG)$, $\Hom(-,\pi_*\cG)$ and then proceeding exactly as above, 
we can again
deduce the surjectivity of  \eqref{eqn:natural-morphism}. We are finished by induction. 
\end{proof}

\begin{thm}\label{thm:indec-silt}
Let $\cS^{G \times \Gm}(\cN)$ denote the silting subcategory of $\Db\Coh^{G\times\Gm}(\cN)$.
\begin{enumerate}
\item For any $\mu \in \bX$, the object $\pi_*\tfE_\mu$ is either $0$ or an indecomposable object of $\cS^{G \times \Gm}(\cN)$.  It is nonzero if and only if $\mu \in -\bX^+$.\label{it:indec-vanish}
\item For $\lambda \in \bX^+$, set $\fE_\lambda := \pi_*\tfE_{w_0\lambda}$.  Every indecomposable object of $\cS^{G \times \Gm}(\cN)$ is isomorphic to an object of the form 
$\fE_\lambda\{r\}$ for some $\lambda \in \bX^+$ and $r \in \Z$.\label{it:indec-class}
\end{enumerate} 
\end{thm}
The vanishing of $\pi_*\tfE_\mu$ for $\mu \notin -\bX^+$ was previously known: see, for instance, the comments in~\cite[\S4.2]{achar-hardesty-riche}.
\begin{proof}
For any $\mu \in \bX$, Corollary~\ref{cor:cot-exact-push} tells us that $\pi_*\tfE_\mu$ lies in $\cS^{G \times \Gm}(\cN)$, and by Theorem~\ref{thm:full-push}, $\pi_*$ induces a surjective ring homomorphism
\[
\End(\tfE_\mu) \twoheadrightarrow \End(\pi_*\tfE_\mu).
\]
Here, $\End(\tfE_\mu)$ is a local ring, since $\tfE_\mu$ is an indecomopsable object in a Krull--Schmidt category.  Its quotient ring $\End(\pi_*\tfE_\mu)$ is therefore either $0$ or a local ring.  Since $\cS^{G \times \Gm}(\cN)$ is also Krull--Schmidt, we conclude that $\pi_*\tfE_\mu$ is either $0$ or indecomposable.

Suppose now that $\mu \in -\bX^+$, and let $\lambda = w_0\mu \in \bX^+$. According to Proposition~\ref{prop:preexcep-struc}, the two compositions 
\[
\tDelta_{w_0\lambda} \to \tfL_{w_0\lambda} \to \tnabla_{w_0\lambda}
\qquad\text{and}\qquad
\tDelta_{w_0\lambda} \to \tfE_{w_0\lambda} \to \tnabla_{w_0\lambda}
\]
are equal (and nonzero).  Now apply $\pi_*$ to these maps.  Using Lemma~\ref{lem:exotic-push}, we obtain
\begin{equation}\label{eqn:fe-classify}
\oDelta_\lambda \to \fL_\lambda \to \onabla_\lambda
\qquad\text{and}\qquad
\oDelta_\lambda \to \fE_\lambda \to \onabla_\lambda.
\end{equation}
These compositions are again equal, and the first one is nonzero, so the second one is as well.  In particular, $\fE_\lambda \ne 0$.  


Part~\eqref{it:indec-class} of the theorem now follows from the ``abstract'' classification of indecomposable silting objects given in Proposition~\ref{prop:preexcep-struc}, together with the equality of the two compositions in~\eqref{eqn:fe-classify}.

To finish the proof of part~\eqref{it:indec-vanish}, we must show that if $\pi_*\tfE_\mu \ne 0$, then $\mu \in -\bX^+$.  By part~\eqref{it:indec-class}, if $\pi_*\tfE_\mu$ is nonzero, it is isomorphic to $\fE_\lambda\{r\}$ for some $\lambda \in \bX^+$ and some $r \in \Z$, so there are nonzero maps
\[
\oDelta_\lambda\{r\} \to \pi_*\tfE_\mu \to \onabla_\lambda\{r\}
\]
whose composition is also nonzero.  By Theorem~\ref{thm:full-push}, these maps arise by applying $\pi_*$ to some nonzero maps
\[
\tDelta_{w_0\lambda}\{r\} \to \tfE_\mu \to \tnabla_{w_0\lambda}\{r\}
\]
whose composition is again nonzero.  By Proposition~\ref{prop:preexcep-struc}, we conclude that $\mu = w_0\lambda$ and $r = 0$, as desired.
\end{proof}

\section{Push-forward functors and dg-coherent sheaves}
\label{sec:pushforward}

In this section, we prove some additional lemmas on complexes of coherent sheaves on $\tcN_I$ and $\cN$.  We also introduce ``dg versions'' of the main geometric categories.

\subsection{More functors}
\label{ss:more-functors}

Recall that $\tcN_I = G \times^{P_I} \fn_I$.  We also set $\tcN^I := G \times^B \fn_I$, and we define maps
\[
\pi_I : \tcN_I \to \cN, \qquad
\mu_I : \tcN^I \to \tcN_I, \qquad
e_I : \tcN^I \hookrightarrow \tcN
\]
as follows: $\pi_I(g,x) = \mathrm{Ad}(g)(x)$, $\mu_I$ is the quotient map, and $e_I$ is induced by the inclusion map $\fn_I \hookrightarrow \fn$.  Note that $\pi_\varnothing$ is the Springer resolution $\pi: \tcN \to \cN$, while $\mu_\varnothing$ and $e_\varnothing$ are identity maps.  The square
\begin{equation}\label{eqn:allmaps}
\begin{tikzcd}
\tcN^I \ar[r, "e_I"] \ar[d, "\mu_I"'] & \tcN \ar[d, "\pi"] \\
\tcN_I \ar[r, "\pi_I"] & \cN
\end{tikzcd}
\end{equation}
commutes (but it is not cartesian).  All four maps are proper, so the derived functors $\pi_{I*}$, $\mu_{I*}$, etc., take bounded complexes of (equivariant) coherent sheaves to bounded complexes of (equivariant) coherent sheaves.  The map $\mu_I$ is smooth, and the map $e_I$ is the inclusion of one smooth variety in another, so the same comments apply to $\mu_I^!$ and $e_I^*$ (see also \cite[\S 9.2]{achar-riche}). We define functors
\begin{align*}
\Xi_I := \mu_{I*} e_I^*&: \Db\Coh^{G \times \Gm}(\tcN) \to \Db\Coh^{G \times \Gm}(\tcN_I), \\
\Xi^I := e_{I*}\mu_I^!&: \Db\Coh^{G \times \Gm}(\tcN_I) \to \Db\Coh^{G \times \Gm}(\tcN).
\end{align*}

Finally, let
\[
s_I: \fn_I \hookrightarrow \cN
\]
be the inclusion map.  Since the $P_I$-action on $\cN$ extends to an action of $G$, there is a well-defined functor
\[
R\Ind_{P_I}^G: \Db\Coh^{P_I \times \Gm}(\cN) \to \Db\Coh^{G \times \Gm}(\cN).
\]
By construction, the following diagram commutes:
\[
\begin{tikzcd}[column sep=0pt]
\Db\Coh^{G \times \Gm}(\tcN_I) \ar[dr, "\pi_{I*}"'] \ar[rr, "\sim"] &&
\Db\Coh^{P_I \times \Gm}(\fn_I) \ar[dl, "R\Ind_{P_I}^G \circ s_{I*}"] \\
& \Db\Coh^{G \times \Gm}(\cN)
\end{tikzcd}
\]

\begin{lem}\label{lem:xi-commute}
The following diagram commutes:
\[
\begin{tikzcd}[column sep=0pt]
\Db\Coh^{G \times \Gm}(\tcN_I) \ar[dr, "\pi_{I*}"'] \ar[rr, "\Xi^I"] &&
\Db\Coh^{G \times \Gm}(\tcN) \ar[dl, "\pi_*"] \\
& \Db\Coh^{G \times \Gm}(\cN)
\end{tikzcd}
\]
\end{lem}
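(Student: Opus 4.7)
The plan is to reduce the identity $\pi_*\Xi^I\cong\pi_{I*}$ to the counit isomorphism $\mu_{I*}\mu_I^!\cong\id$, and then to deduce the latter from Grothendieck duality together with the fact that $\mu_I$ is a partial flag bundle. The commutativity $\pi\circ e_I = \pi_I\circ\mu_I$ and properness of all four maps in~\eqref{eqn:allmaps} give a natural isomorphism $\pi_*\circ e_{I*}\cong\pi_{I*}\circ\mu_{I*}$ of derived pushforwards. Feeding this into the definition $\Xi^I = e_{I*}\mu_I^!$, I obtain
\[
\pi_*\Xi^I \;=\; \pi_*e_{I*}\mu_I^! \;\cong\; \pi_{I*}\bigl(\mu_{I*}\mu_I^!\bigr),
\]
so it suffices to produce a natural isomorphism $\mu_{I*}\mu_I^!\cong\id$ realizing the adjunction counit.

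For this step, $\mu_I: G\times^B\fn_I\to G\times^{P_I}\fn_I$ is a locally trivial bundle with fiber the partial flag variety $P_I/B$; in particular it is smooth and proper. Standard vanishing for flag varieties gives $H^0(P_I/B,\cO)=\bk$ and $H^i(P_I/B,\cO)=0$ for $i>0$, and flat base change then yields $R\mu_{I*}\cO_{\tcN^I}\cong\cO_{\tcN_I}$. Grothendieck duality, applied to the proper morphism $\mu_I$ with test object $\cO_{\tcN^I}$, gives
\[
R\mu_{I*}\bigl(\mu_I^!\mathcal{G}\bigr)\;\cong\;R\sHom_{\tcN_I}\!\bigl(R\mu_{I*}\cO_{\tcN^I},\mathcal{G}\bigr)\;\cong\;R\sHom_{\tcN_I}\!\bigl(\cO_{\tcN_I},\mathcal{G}\bigr)\;\cong\;\mathcal{G},
\]
naturally in $\mathcal{G}$, which is the required identification.

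The main foreseeable obstacle is bookkeeping in the $G\times\Gm$-equivariant coherent derived category: one must verify that the abstract isomorphism supplied by Grothendieck duality coincides (up to canonical identification) with the adjunction counit of $(\mu_{I*},\mu_I^!)$ and is $G\times\Gm$-equivariant, so that composing with $\pi_{I*}$ yields a \emph{natural} isomorphism of functors $\pi_*\Xi^I\cong\pi_{I*}$. Both facts are standard in the equivariant six-functor formalism and should only require a brief justification.
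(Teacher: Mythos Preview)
Your argument is correct and follows essentially the same strategy as the paper: both use the commutative square~\eqref{eqn:allmaps} to reduce to $\mu_{I*}\mu_I^!\cong\id$, and both establish this via the cohomology of the fiber $P_I/B$. The only cosmetic difference is that the paper unpacks $\mu_I^!$ as $\mu_I^*(-)\otimes\cO_{\tcN^I}(-2\rho_I)[r_I]$ and applies the projection formula together with the Serre-dual computation $\mu_{I*}\cO_{\tcN^I}(-2\rho_I)[r_I]\cong\cO_{\tcN_I}$, whereas you invoke sheafified Grothendieck duality directly and compute $R\mu_{I*}\cO_{\tcN^I}\cong\cO_{\tcN_I}$.
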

\begin{proof}
Let
\[
\rho_I := \frac{1}{2}\sum_{\alpha \in \Phi_I^+}\alpha \qquad\text{and}\qquad 
r_I := \dim P_I/B.
\]
Let $\cO_{\tcN^I}(-2\rho_I)$ denote the pullback along $\tcN^I \to G/B$ of the line bundle of weight $-2\rho_I$.  By smooth base change for the square
\[
\begin{tikzcd}
\tcN^I \ar[r] \ar[d, "\mu_I"'] & G/B = G \times^{P_I} (P_I/B) \ar[d]  \\
\tcN_I \ar[r] & G/P_I
\end{tikzcd}
\]
we see that $\mu_{I*}\cO_{\tcN^I}(-2\rho_I)$ is the pullback of the object in $\Db\Coh^{G \times \Gm}(G/P_I)$ corresponding to $R\Ind_B^{P_I}(-2\rho_I)$.  Using the form of Serre duality explained in~\cite[\S II.4.2(8)]{jantzen}, we have $R\Ind_B^{P_I}(-2\rho_I) \cong R\Ind_B^{P_I}(\bk)[-r_I] \cong \bk[-r_I]$, so
\[
\mu_{I*}\cO_{\tcN^I}(-2\rho_I)[r_I] \cong \cO_{\tcN_I}.
\]
Finally, we recall (cf. \cite[Lemma~9.4]{achar-riche}) that
\[
\mu_I^!(-) \cong \mu_I^*(-)\lotimes \cO_{\tcN^I}(-2\rho_I)[r_I].
\]

Using these observations, the commutativity of~\eqref{eqn:allmaps}, and the projection formula, for $M \in \Db\Coh^{G \times \Gm}(\tcN_I)$, we have
\begin{multline*}
\pi_*\Xi^I(M) = \pi_* e_{I*} \mu_I^!(M)
\cong \pi_{I*}\mu_{I*}(\mu_I^*(M) \lotimes_{\tcN^I}\cO_{\tcN^I}(-2\rho_I)[r_I]) \\
\cong \pi_{I*}(M \lotimes \mu_{I*}\cO_{\tcN^I}(-2\rho_I)[r_I]) \cong \pi_{I*}(M),
\end{multline*}
as desired.
\end{proof}

\subsection{dg coherent sheaves}

One goal of this section and the next is to show that the functor $\pi_{I*}: \Db\Coh^{G \times \Gm}(\tcN_I) \to \Db\Coh^{G \times \Gm}(\cN)$ is co-$t$-exact (see Corollary~\ref{cor:para-co-t}).  However, the proof involves a construction that takes place not in the setting of ordinary coherent sheaves, but in the setting of \emph{dg coherent sheaves} (see the comments in~\S\ref{ss:steinberg}).  In this subsection, we introduce notation related to dg coherent sheaves on some of the varieties we have discussed above.

See Appendix~\ref{app:dg} for generalities on (equivariant) dg coherent sheaves on an affine scheme.  In the framework of that appendix, one may consider the categories
\[
D^{P_I}_\Coh(\fn_I),\qquad
D^B_\Coh(\fn_I),\qquad
D^G_\Coh(\cN).
\]
It will be convenient to also be able to speak of ``dg coherent sheaves on $\tcN_I$ or $\tcN^I$.'' Since these varieties are not affine, they are not covered by the theory of Appendix~\ref{app:dg}.  Instead, we just take analogues of~\eqref{eqn:restric-equiv} as definitions: we set
\[
D^G_\Coh(\tcN_I) := D^{P_I}_\Coh(\fn_I)
\qquad\text{and}\qquad
D^G_\Coh(\tcN^I) := D^B_\Coh(\fn_I)
\]
We have ``degrading functors'' (see Proposition~\ref{prop:degrade})
\begin{align*}
\xi_I &: \Db\Coh^{G \times \Gm}(\tcN_I) \to D^G_\Coh(\tcN_I), \\
\xi_\cN &: \Db\Coh^{G \times \Gm}(\cN) \to D^G_\Coh(\cN).
\end{align*}
(Such a functor exists for $\tcN^I$ as well, of course, but we will not need a separate notation for it.)

Using Lemmas~\ref{lem:dg-pushpull} and~\ref{lem:dg-indres}, one can define the functors
\begin{align*}
\bar\pi_{I*} = R\Ind_{P_I}^G \bar s_{I*}&: D^G_\Coh(\tcN_I) \to D^G_\Coh(\cN), \\
\bar \mu_{I*} = R\Ind_B^{P_I}&: D^G_\Coh(\tcN^I) \to D^G_\Coh(\tcN_I), \\
\bar \mu_I^! = \Res_B^{P_I}({-})[n_I] \lotimes \textstyle\bigwedge^{n_I} (\fp_I/\fb)^*&: D^G_\Coh(\tcN_I) \to D^G_\Coh(\tcN^I), \\
\bar e_{I*}&: D^G_\Coh(\tcN^I) \to D^G_\Coh(\tcN), \\
\bar e_I^*&: D^G_\Coh(\tcN) \to D^G_\Coh(\tcN^I),
\end{align*}
where $n_I = \dim P_I - \dim B$.  Each of these functors is compatible with its ordinary (non-dg) analogue via the appropriate degrading functors.  That is, the diagram
\[
\begin{tikzcd}
\Db\Coh^{G \times \Gm}(\tcN_I) \ar[r, "\xi_I"] \ar[d, "\pi_{I*}"'] &
  D^G_\Coh(\tcN_I) \ar[d, "\ol{\pi}_{I*}"] \\
\Db\Coh^{G \times \Gm}(\cN) \ar[r, "\xi_\cN"] &
  D^G_\Coh(\cN)
\end{tikzcd}
\]
commutes, and likewise for each of the other functors defined above.

Lastly, we define
\begin{align*}
\bar\Xi_I := \bar\mu_{I*} \bar e_I^*&: D^G_\Coh(\tcN) \to D^G_\Coh(\tcN_I), \\
\bar\Xi^I := \bar e_{I*}\bar\mu_I^!&: D^G_\Coh(\tcN_I) \to D^G_\Coh(\tcN).
\end{align*}
By the techniques of Appendix~\ref{app:dg}, the proof of Lemma~\ref{lem:xi-commute} can be adapted to show that the diagram
\begin{equation}\label{eqn:xi-commute-dg}
\begin{tikzcd}[column sep=0pt]
D^G_\Coh(\tcN_I) \ar[dr, "\bar\pi_{I*}"'] \ar[rr, "\bar\Xi^I"] &&
D^G_\Coh(\tcN) \ar[dl, "\bar\pi_*"] \\
& D^G_\Coh(\cN)
\end{tikzcd}
\end{equation}
commutes.

\begin{lem}\label{lem:co-t-structure-N}
Let $I \subset S$ be arbitrary.
\begin{enumerate}
\item The set $\{\xi_I(\tfE_{I,\lambda})\}_{\lambda \in \bXpp_I}$ generates a silting subcategory of 
$D^G_\Coh(\tcN_I)$, so that $\xi_I$ becomes co-$t$-exact.  Moreover, an object $\cF \in \Db\Coh^{G \times \Gm}(\tcN_I)$ is silting if and only if $\xi_I(\cF) \in D^G_\Coh(\tcN_I)$ is silting.\label{it:xiI-cot}
\item The set $\{\xi_\cN(\fE_{\lambda})\}_{\lambda \in \bX^+}$ generates a silting subcategory of 
$D^G_\Coh(\cN)$, so that both $\xi_{\cN}$ and 
$\ol{\pi}_*: D^G_\Coh(\tcN) \rightarrow D^G_\Coh(\cN)$ become co-$t$-exact functors. Moreover, an object $\cF \in \Db\Coh^{G \times \Gm}(\cN)$ is silting if and only if $\xi_{\cN}(\cF) \in D^G_\Coh(\cN)$ is silting.\label{it:xiN-cot}
\end{enumerate}
\end{lem}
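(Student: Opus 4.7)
The plan is to transfer the silting structures from the graded triangulated categories to their dg counterparts by means of the degrading functors. The essential technical input is the standard ``forget-the-grading'' property of these functors, spelled out in Appendix~\ref{app:dg}: for any $A, B$ and any $k \in \Z$, one should have a natural $\Hom$-comparison isomorphism
\[
\Hom_{D^G_\Coh(\tcN_I)}\!\bigl(\xi_I(A),\xi_I(B)[k]\bigr) \;\cong\; \bigoplus_{n \in \Z} \Hom_{\Db\Coh^{G\times\Gm}(\tcN_I)}\!\bigl(A, B\la n\ra[k]\bigr),
\]
together with the identification $\xi_I(B\la 1\ra) \simeq \xi_I(B)$ expressing that $\xi_I$ absorbs the Tate twist (and analogously for $\xi_\cN$). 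Nailing down these two compatibilities is the one real technical step; everything else is a formal propagation of structure across $\xi$.

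For part~\eqref{it:xiI-cot}, I would take $\fS \subset D^G_\Coh(\tcN_I)$ to be the full additive subcategory generated by $\{\xi_I(\tfE_{I,\lambda})\}_{\lambda \in \bXpp_I}$ under finite direct sums and direct summands, and verify the three axioms of Definition~\ref{defn:silting-cat}. Summand-closure is automatic; the positive-degree $\Hom$-vanishing follows at once from the $\Hom$-comparison formula together with the graded silting property of $\tfE_{I,\lambda}$ (which, via Lemma~\ref{lem:tcn-silt-tilt} and standard graded tilting theory, supplies vanishing of $\Hom(\tfE_{I,\lambda}, \tfE_{I,\mu}\la n\ra[k])$ for all $n \in \Z$ and all $k > 0$); and generation follows from Remark~\ref{rmk:silting-gen}, since a triangulated generating family in the graded category consisting of the $\tfE_{I,\lambda}\la n\ra$ descends under $\xi_I$ to a triangulated generating family in the dg category (the $\la 1\ra$-twist being absorbed). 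Proposition~\ref{prop:silt_co-t-structure} then yields the advertised co-$t$-structure, and co-$t$-exactness of $\xi_I$ is automatic because it sends silting generators to silting generators.

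For the ``moreover'' clause I would invoke the intrinsic characterization of the coheart of a bounded co-$t$-structure with silting subcategory $\fS$: an object $X$ lies in the coheart if and only if $\Hom(X, S[k]) = 0 = \Hom(S, X[k])$ for every $S \in \fS$ and every $k \ge 1$. (This is a quick consequence of the truncation triangle of Definition~\ref{defn:cotstruc}\eqref{ax:dt} together with the splitting trick used in the proof of Proposition~\ref{prop:gr-single-gen}.) Applied to $X = \xi_I(\cF)$ and $S = \xi_I(\tfE_{I,\lambda})$, the $\Hom$-comparison formula converts the dg silting condition on $\xi_I(\cF)$ term-by-term into the (equivalent) graded silting condition on $\cF$.

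Part~\eqref{it:xiN-cot} is then proved along exactly the same lines, now drawing the indecomposable silting objects from Theorem~\ref{thm:indec-silt}. For co-$t$-exactness of $\bar\pi_*$, it suffices to check that $\bar\pi_*$ sends the silting generators $\xi_\tcN(\tfE_\lambda)$ of the dg coheart on $\tcN$ into the dg coheart on $\cN$; but using the compatibility $\bar\pi_* \circ \xi_\tcN = \xi_\cN \circ \pi_*$ established in Section~\ref{sec:pushforward} together with the definition $\fE_{w_0\lambda} = \pi_*\tfE_\lambda$, this reduces to the silting of $\xi_\cN(\fE_{w_0\lambda})$, which is immediate from the silting-subcategory claim proved in the first part of~\eqref{it:xiN-cot}. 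The one genuine technical hurdle throughout is the $\Hom$-comparison formula for the degrading functors; everything else is formal bookkeeping.
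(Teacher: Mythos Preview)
Your overall strategy matches the paper's: transfer the silting structure via the degrading functor using the $\Hom$-comparison isomorphism from Proposition~\ref{prop:degrade}. However, you have misidentified which Tate twist $\xi$ degrades. Proposition~\ref{prop:degrade} states that $\xi$ is a degrading functor with respect to $\{1\} = \la -1\ra[1]$, not with respect to $\la 1\ra$. The correct comparison is
\[
\Hom\bigl(\xi_I(A),\xi_I(B)[k]\bigr) \;\cong\; \bigoplus_{n \in \Z} \Hom\bigl(A, B\{n\}[k]\bigr),
\]
and the absorption identity reads $\xi_I(B\{1\}) \cong \xi_I(B)$. This matters, because the supportive co-$t$-structure on $\Db\Coh^{G\times\Gm}(\tcN_I)$ is built from the $\{1\}$-graded co-quasi-exceptional set, so the silting property supplies exactly $\Hom(\tfE_{I,\lambda},\tfE_{I,\mu}\{n\}[k]) = 0$ for $k>0$. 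Your asserted vanishing with $\la n\ra$ in place of $\{n\}$ is false in general: for instance $\Hom(\tfE_{I,\lambda},\tfE_{I,\lambda}\la -1\ra[1]) = \Hom(\tfE_{I,\lambda},\tfE_{I,\lambda}\{1\})$ need not vanish. Once you use the correct twist throughout, your argument goes through.

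Two smaller remarks. First, in your treatment of $\bar\pi_*$, the identity $\pi_*\tfE_\lambda = \fE_{w_0\lambda}$ holds only for $\lambda \in -\bX^+$; for general $\lambda$ the pushforward is merely some silting object (possibly zero), by Corollary~\ref{cor:cot-exact-push}. This suffices, and is how the paper argues. Second, the paper handles closure under summands differently: rather than building it into the definition of $\fS$, it proves each $\xi_I(\tfE_{I,\lambda})$ is indecomposable by combining Lemma~\ref{lem:n-hom-finite-2} (finite-dimensionality of $\bigoplus_n\Hom(\tfE_{I,\lambda},\tfE_{I,\lambda}\{n\})$) with a graded-local-ring argument from~\cite{gordon-green}. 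For the ``moreover'' clause, the paper argues by contrapositive using faithfulness of $\xi_I$ rather than via your orthogonality characterization of the coheart; both routes are valid.
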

\begin{proof}
\eqref{it:xiI-cot}~Let $\cS^G(\tcN_I)$ denote the full additive subcategory of $D^G_\Coh(\tcN_I)$ consisting of (finite) direct sums of objects of the form $\xi_I(\tfE_{I,\lambda})$.  We first claim that each $\xi_I(\tfE_{I,\lambda})$ is indecomposable.  By Proposition~\ref{prop:degrade}\eqref{it:graded-ff}, we have
\[
\End(\xi_I(\tfE_{I,\lambda})) \cong \bigoplus_{n \in \Z} \Hom(\tfE_{I,\lambda}, \tfE_{I,\lambda}\{n\}).
\]
These spaces are finite-dimensional by Lemma~\ref{lem:n-hom-finite-2}.  The right-hand side can be thought of as a graded artinian ring.  Its degree-$0$ component $\End(\tfE_{I,\lambda})$ is a local ring, so by~\cite[Theorem~3.1]{gordon-green}, $\End(\xi_I(\tfE_{I,\lambda}))$ is also local.  We conclude that $\xi_I(\tfE_{I,\lambda})$ is indecomposable, and hence that $\cS^G(\tcN_I)$ is closed under direct summands.

Next, we claim that part~\eqref{it:silting-ext} of Definition~\ref{defn:silting-cat} holds for $\cS^G(\tcN_I)$.  This follows from the corresponding property for $\cS^{G \times \Gm}(\tcN_I)$, together with Proposition~\ref{prop:degrade}\eqref{it:graded-ff} again.

Since $\Db\Coh^{G \times \Gm}(\tcN_I) = \lgen \cS^{G \times \Gm}(\tcN_I) \rgen_\tri$ (see Remark~\ref{rmk:silting-gen}), using Proposition~\ref{prop:degrade}\eqref{it:graded-surj}, we see that $D^G_\Coh(\tcN_I) = \lgen \cS^G(\tcN_I) \rgen_\tri$.  Thus $\cS^G(\tcN_I)$ is a silting subcategory of $D^G_\Coh(\tcN_I)$, and $\xi_I$ is co-$t$-exact.

Finally, it remains to prove that if $\xi_I(\cF)$ is silting, then $\cF$ is silting.  Suppose $\cF$ is \emph{not} silting.  Using the description of the co-$t$-structure given in Proposition~\ref{prop:silt_co-t-structure}, we see there must either be a nonzero morphism $\cF \to \tfE_{I,\lambda}[n]\la k\ra$ for some $\lambda$ and some $n > 0$, or a morphism $\tfE_{I,\lambda}[-n]\la k\ra \to \cF$, again with $n > 0$.  Since $\xi_I$ is faithful (Proposition~\ref{prop:degrade}\eqref{it:graded-ff}), applying $\xi_I$ to this morphism yields a nonzero morphism in $D^G_\Coh(\tcN_I)$ that shows that $\xi_I(\cF)$ is not silting either.

\eqref{it:xiN-cot}~Most of this statement holds by the same reasoning as in part~\eqref{it:xiI-cot}; the only issue to address is the claim that $\ol{\pi}_*$ is co-$t$-exact.  This follows from the corresponding claim for $\pi_*$ (Corollary~\ref{cor:cot-exact-push}).
\end{proof}

\section{Representations of \texorpdfstring{$G$}{G}}
\label{sec:repG}

From now on, we assume that the characteristic $p$ of $\bk$ is larger than the Coxeter number $h$ for $G$, and that $G$ has a simply connected derived subgroup.  Let $\mbf{G}$ be such that $G = \mbf{G}/\mbf{G}_{1}$, where $\mbf{G}_{1}$ is the Frobenius kernel 
(i.e. so that $G$ is the Frobenius twist of $\mbf{G}$). Let $\mbf{B} \subset \mbf{G}$ be the Borel subgroup corresponding to $B \subset G$.  Let 
$\Rep(\mbf{G})$ denote the category of finite-dimensional rational representations of $\mbf{G}$.  For $\lambda \in \bX^+$, let $\weyl(\lambda)$, resp.~$\coweyl(\lambda)$, resp.~$\tilt(\lambda)$, denote the Weyl module, resp.~dual Weyl module, resp.~indecomposable tilting module, of highest weight $\lambda$.

For each $I \subset S$, fix a choice of weight $\varsigma_I \in \bX^+$ whose pairing with simple coroots is given by
\[
\alpha_s^\vee(\varsigma_I) =
\begin{cases}
1 & \text{if $s \in I$,} \\
0 & \text{if $s \notin I$.}
\end{cases}
\]
(The existence of such a weight is guaranteed by the assumption that the derived subgroup of $G$ is simply connected.)  The
\emph{extended block} $\Rep_I(\mbf{G}) \subset \Rep(\mbf{G})$ is defined as the Serre subcategory generated by all of
the simple modules whose highest weight is contained in $\bX^+ \cap \Waff\cdot_p (-\varsigma_I)$, where ``$\cdot_p$'' denotes the $p$-dilated dot action (see, for instance, \cite[\S 1.2]{achar-hardesty-riche} for a discussion).  This subcategory is  a direct summand of $\Rep(\mbf{G})$, and there is a projection functor
\[
\pr_I: \Rep(\mbf{G}) \to \Rep_I(\mbf{G}).
\]
One of the main results of~\cite{achar-riche} can be restated as follows.

\begin{thm}[\cite{achar-riche}]\label{thm:ar}
There is an equivalence of categories
\[
F_I: D^G_\Coh(\tcN_I) \xrightarrow{\sim} \Db\Rep_I(\mbf{G}). 
\]
Moreover, this functor satisfies
\[
F_I(\xi_I(\tnabla_{I,\lambda})) \cong \coweyl(w_\lambda \cdot_p (-\varsigma_I))
\qquad\text{and}\qquad
F_I(\xi_I(\tDelta_{I,\lambda})) \cong \weyl(w_\lambda \cdot_p (-\varsigma_I)),
\]
\end{thm}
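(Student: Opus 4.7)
The plan is to deduce this theorem directly as a restatement of the main equivalence established in~\cite{achar-riche}. Since the statement is essentially a repackaging of results already proved there, the proof amounts to unwinding definitions and verifying that the identifications of standard and costandard objects agree with the conventions in that paper.

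First, I would recall the relevant result from~\cite{achar-riche}: an equivalence of triangulated categories is constructed there between a derived category of dg-modules over a suitable dg-algebra, which under the dictionary of Appendix~\ref{app:dg} identifies with $D^G_\Coh(\tcN_I) = D^{P_I}_\Coh(\fn_I)$, and the bounded derived category $\Db\Rep_I(\mbf{G})$ of the extended block. The construction in~\cite{achar-riche} rests on the formality of the relevant $\Ext$-algebra and a derived Koszul-type duality, and is set up functorially with respect to the parabolic $P_I$, so the parabolic case follows from the same machinery used in the Borel case.

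Second, I would verify the behavior on the exceptional objects $\tnabla_{I,\lambda}$ and $\tDelta_{I,\lambda}$. These were introduced in~\cite[\S9.5]{achar-riche} precisely so that, under a graded enhancement of this equivalence, they correspond to graded lifts of $\coweyl(w_\lambda \cdot_p (-\varsigma_I))$ and $\weyl(w_\lambda \cdot_p (-\varsigma_I))$ respectively. Applying the degrading functor $\xi_I: \Db\Coh^{G \times \Gm}(\tcN_I) \to D^G_\Coh(\tcN_I)$ from Proposition~\ref{prop:degrade} collapses the $\Gm$-grading, producing the ungraded images claimed in the theorem. The parametrization of dominant weights in $\Rep_I(\mbf{G})$ by $\bXpp_I$ via $\lambda \mapsto w_\lambda \cdot_p (-\varsigma_I)$ follows from the standard alcove geometry once $\varsigma_I$ is fixed as a weight regular on the simple coroots outside $I$ and having value $1$ on those in $I$.

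The main point requiring care is a careful matching of conventions between~\cite{achar-riche} and the present paper, in particular the Tate twist convention (see Remark~\ref{rmk:twist-conventions}), which here is opposite to the one in~\cite[\S9.1]{achar-riche}. This convention difference affects only the internal grading shifts; after degrading via $\xi_I$ it disappears entirely, so the underlying identifications are unaffected. No additional geometric or representation-theoretic input is needed beyond what is already in~\cite{achar-riche}.
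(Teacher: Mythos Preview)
Your proposal is essentially correct and follows the same approach as the paper: both treat the theorem as a repackaging of the main equivalence from~\cite{achar-riche}, with the identification of (co)standard objects following from~\cite[\S9.5]{achar-riche}.

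There is one point where you are less precise than the paper, and it is worth flagging. You assert that~\cite{achar-riche} already constructs an equivalence between a dg-module category identifiable with $D^G_\Coh(\tcN_I)$ and $\Db\Rep_I(\mbf{G})$. The paper is more careful here: what~\cite[Propositions~10.3 and~10.6]{achar-riche} actually provides is a \emph{degrading functor} $\tilde F_I: \Db\Coh^{G \times \Gm}(\tcN_I) \to \Db\Rep_I(\mbf{G})$, built using a \emph{bigraded} Koszul duality functor $\kappa_I$ from~\cite[\S4]{achar-riche}. To obtain the equivalence $F_I$ out of $D^G_\Coh(\tcN_I)$ making $\tilde F_I = F_I \circ \xi_I$, one must replace $\kappa_I$ by a \emph{singly graded} Koszul duality following~\cite[\S8]{gkm}. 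Your account elides this step; it is not difficult, but it is the actual content needed to pass from what is literally in~\cite{achar-riche} to the statement as formulated here. Your remark about the Tate twist convention is apt but orthogonal to this issue.
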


This statement does not appear in quite this form in~\cite{achar-riche}, but it is easily deduced from~\cite[Propositions~10.3 and~10.6]{achar-riche}, which state that there is a degrading functor $\tilde F_I: \Db\Coh^{G \times \Gm}(\tcN_I) \to \Db\Rep_I(\mbf{G})$ with the desired behavior on $\tnabla_{I,\lambda}$ and $\tDelta_{I,\lambda}$.  More precisely, examining the construction of $\tilde F_I$, one sees that a crucial role is played by a certain ``bigraded Koszul duality functor'' $\kappa_I$, described in~\cite[\S 4]{achar-riche}.  After replacing this by a singly graded Koszul duality functor following~\cite[\S 8]{gkm}, one can check that there is an equivalence of categories $F_I$ that makes the following diagram commute:
\[
\begin{tikzcd}[column sep=0pt]
 \Db\Coh^{G \times \Gm}(\tcN_I) \ar[dr, "\tilde F_I"'] \ar[rr, "\xi_I"] && D^G_\Coh(\tcN_I) \ar[dl, "F_I"] \\
 & \Db\Rep_I(\mbf{G})
 \end{tikzcd}
 \]

\begin{rmk}\label{rmk:F-cot-exact}
If we equip $D^G_\Coh(\tcN_I)$ with the co-$t$-structure from Lemma~\ref{lem:co-t-structure-N} and $\Db\Rep_I(\mbf{G})$ with the co-$t$-structure whose coheart consists of tilting $\mbf{G}$-modules in $\Rep_I(\mbf{G})$, then the functor $F_I$ is co-$t$-exact.  To see this, use Lemma~\ref{lem:tcn-silt-tilt} and observe that the functor $\tilde F_I$ sends tilting objects in the heart of the representation-theoretic $t$-structure to tilting $\mbf{G}$-modules (cf.~\cite[Remark~3.7]{acr}).
\end{rmk}

One can transfer various representation-theoretic or geometric constructions across the equivalence of Theorem~\ref{thm:ar}.  In this section and the next one, we exploit this idea to obtain results on $\mbf{G}_{1}$-cohomology.

\subsection{Steinberg translation functors}
\label{ss:steinberg}

For each subset $I \subset S$, we define
\[
\St_I^{\mbf{G}} := \Ind_{\mbf{B}}^{\mbf{G}}((p-1)\varsigma_I) \in \Rep_I(\mbf{G}).
\]
It is easy to verify that $(p-1)\varsigma_I \in \bX^+ \cap \Waff\cdot_p (-\varsigma_I)$ is a minimal element, 
so by the linkage principle (see \cite[II.7]{jantzen}), we have 
\[
\St_I^{\mbf{G}} \cong \tilt((p-1)\varsigma_I) \in \Rep_I(\mbf{G}).
\]
(In particular, $\St_{\varnothing}^{\mbf{G}} \cong \bk$ if we set $\varsigma_\varnothing = 0$.)  We define the \emph{Steinberg translation functors} to be the functors given by
\[
\begin{aligned}
\bS^I := \pr_{\varnothing}\circ - \otimes (\St_I^{\mbf{G}})^*: \Db\Rep_I(\mbf{G}) &\longrightarrow \Db\Rep_{\varnothing}(\mbf{G}) \\
\bS_I := \pr_{I}\circ - \otimes \St_I^{\mbf{G}}: \Db\Rep_I(\mbf{G}) &\longleftarrow \Db\Rep_{\varnothing}(\mbf{G}).
\end{aligned}
\]
Observe that $\bS^I$ and $\bS_I$ are both left and right adjoints to each other. We can consider the transports of these functors across the equivalence of Theorem~\ref{thm:ar}:
\[
\begin{aligned}
\Sigma^I&:= {F_\varnothing}^{-1}\circ \pmb{\Sigma}^I\circ {F_I}: D^G_\Coh(\tcN_I) \rightarrow D^G_\Coh(\tcN), \\
\Sigma_I&:= {F_I}^{-1}\circ \pmb{\Sigma}_I\circ {F_\varnothing}: D^G_\Coh(\tcN) \rightarrow D^G_\Coh(\tcN_I).
\end{aligned}
\]
Unfortunately, we do not know how to give concrete geometric descriptions of $\Sigma^I$ or $\Sigma_I$ in the language of dg coherent sheaves, nor how to ``lift'' them to $\Db\Coh^{G \times \Gm}(\tcN_I)$.  (If such lifts were available, the main results of this paper could be proved using only the language of coherent sheaves, and avoiding the technical difficulties of dg coherent sheaves.)

On the other hand, we can also consider the transports 
\[
\begin{aligned}
\pmb{\Xi}^I &:= {F_\varnothing} \circ \bar\Xi^I \circ {F_I}^{-1}: \Db\Rep_I(\mbf{G})\rightarrow \Db\Rep_\varnothing(\mbf{G}),\\
\pmb{\Xi}_I &:= {F_I} \circ \bar\Xi_I \circ {F_\varnothing}^{-1} :\Db\Rep_I(\mbf{G}) \leftarrow \Db\Rep_\varnothing(\mbf{G}).
\end{aligned}
\]
We do not know how to give an explicit representation-theoretic interpretation of $\pmb{\Xi}^I$, but we will see some representation-theoretic information about $\pmb{\Xi}_I$ in the proof of Lemma~\ref{lem:hom-nat-iso} below.

\subsection{A dg enhancement of \texorpdfstring{$\mbf{G}_{1}$}{G1}-cohomology}

For each $I \subset S$, define a functor 
\[
\mbf{H}_I: \Db\Rep(\mbf{G}) \rightarrow D^G_\Coh(\cN)
\qquad\text{by}\qquad
\mbf{H}_I := \ol{\pi}_{I*}\circ F_I^{-1} \circ \pr_I.
\]
The following lemma explains the relationship between these functors and $\mbf{G}_{1}$-cohomology.

\begin{lem}
\phantomsection
\label{lem:varphi-coh}
\begin{enumerate}
\item There is a $G$-equivariant isomorphism of graded rings $\bk[\cN] \cong \Ext^\bullet_{\mbf{G}_{1}}(\bk,\bk)$.\label{it:varphi-coh-aj}
\item For any $I \subset S$ and $M \in \Db\Rep_{I}(\mbf{G})$, there is a natural isomorphism of graded $G$-equivariant $\bk[\cN]$-modules\label{it:varphi-coh-gen}
\[
H^\bullet\mbf{H}_I(M) \cong \bigoplus_{k\in \Z} \Hom_{\mbf{G}_{1}}(\St_I^{\mbf{G}}, M[k]).
\]
\end{enumerate}
\end{lem}
\begin{proof}
Part~\eqref{it:varphi-coh-aj} is identical to~\cite[Lemma~8.1]{achar-hardesty-riche}. For part~\eqref{it:varphi-coh-gen}, copy the proofs of~\cite[Lemma~8.1 and Proposition~9.1]{achar-hardesty-riche}, using the observation that
 \[
{F_I}(\cO_{\tcN_I}) = \St_I^{\mbf{G}}. \qedhere
 \]
\end{proof}

\begin{lem}\label{lem:cohom-silting}
For any $\mu \in \bX^+$, we have
\[
\mbf{H}_\varnothing(\tilt(\mu)) \cong \begin{cases}
	\xi_\cN(\fE_{w_0\lambda}) & \text{if $\mu = w_\lambda \cdot_p 0$ for some $\lambda \in -\bX^+$,}\\
	0 	& \text{otherwise}. 
	\end{cases}
\]
\end{lem}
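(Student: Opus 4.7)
The plan proceeds by case analysis on $\mu$, using the dictionary of Theorem~\ref{thm:ar} to transport $\tilt(\mu)$ to the geometric side and then pushing forward along $\pi$.

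First, if $\mu \notin \Waff \cdot_p 0$ then by the linkage principle $\tilt(\mu) \notin \Rep_\varnothing(\mbf{G})$, so $\pr_\varnothing(\tilt(\mu)) = 0$ and hence $\mbf{H}_\varnothing(\tilt(\mu)) = 0$. Since every weight of the form $w_\lambda \cdot_p 0$ lies in $\Waff \cdot_p 0$, this is consistent with the ``otherwise'' clause. We may therefore assume $\mu \in \bX^+ \cap \Waff \cdot_p 0$ and write $\mu = w_\lambda \cdot_p 0$ for the unique $\lambda \in \bX$.

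Next, Theorem~\ref{thm:ar} identifies $F_\varnothing(\xi_\varnothing(\tnabla_\lambda)) \cong \coweyl(\mu)$ and $F_\varnothing(\xi_\varnothing(\tDelta_\lambda)) \cong \weyl(\mu)$. Combined with the co-$t$-exactness of $F_\varnothing$ (Remark~\ref{rmk:F-cot-exact}), and the intrinsic characterization of indecomposable silting (respectively, indecomposable tilting) objects as the unique object in the coheart fitting into the canonical diagram factoring $\iota_\lambda$ (resp.~the natural map $\weyl(\mu) \to \coweyl(\mu)$) from Proposition~\ref{prop:preexcep-struc}\eqref{it:preexcep-cot}, this forces $F_\varnothing^{-1}(\tilt(\mu)) \cong \xi_\varnothing(\tfE_\lambda)$. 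Using the compatibility of $\bar\pi_*$ with the degrading functor $\xi$, we get
\[
\mbf{H}_\varnothing(\tilt(\mu)) \cong \bar\pi_*\xi_\varnothing(\tfE_\lambda) \cong \xi_\cN(\pi_*\tfE_\lambda).
\]
If $\lambda \in -\bX^+$, then setting $\nu = w_0\lambda \in \bX^+$, Theorem~\ref{thm:indec-silt}\eqref{it:indec-ex} directly gives $\pi_*\tfE_\lambda = \fE_{w_0\lambda}$, completing this sub-case.

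The remaining difficulty, which I expect to be the main obstacle, is to show that $\pi_*\tfE_\lambda = 0$ when $\lambda \in \bX \setminus (-\bX^+)$. My strategy is as follows: by Lemma~\ref{lem:exotic-push}, $\pi_*\tDelta_\lambda \cong \oDelta_{\dom\lambda}\la -\delta^*_\lambda\ra$ and $\pi_*\tnabla_\lambda \cong \onabla_{\dom\lambda}\la \delta^*_\lambda\ra$ with $\delta^*_\lambda > 0$. The co-quasi-exceptional structure on $\cN$ (Proposition~\ref{prop:nilpotent-exc} together with Lemma~\ref{lem:pre-exc-basic}) then forces $\Hom(\pi_*\tDelta_\lambda, \pi_*\tnabla_\lambda) = 0$, and so $\pi_*\iota_\lambda = 0$. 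On the other hand, by Theorem~\ref{thm:indec-silt} each indecomposable silting summand $\fE_\nu\{r\}$ of the silting object $\pi_*\tfE_\lambda$ admits a nonzero canonical composition $\oDelta_\nu\{r\} \to \onabla_\nu\{r\}$, which should obstruct the vanishing of $\pi_*\iota_\lambda$ unless $\pi_*\tfE_\lambda$ is actually zero. Rigorously ruling out silting decompositions compatible with the vanishing composition---in particular, excluding the possibility of ``mixed'' summands whose canonical compositions cancel in the pushforward triangle---is where the bulk of the technical work will lie.
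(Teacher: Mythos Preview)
Your overall approach matches the paper's: case-split on $\mu$, transport $\tilt(\mu)$ to $\xi_\varnothing(\tfE_\lambda)$ via $F_\varnothing$, then compute $\mbf{H}_\varnothing(\tilt(\mu)) \cong \xi_\cN(\pi_*\tfE_\lambda)$.  The paper cites~\cite[Proposition~9.1]{achar-hardesty-riche} for the identification $F_\varnothing(\xi_\varnothing(\tfE_\lambda)) \cong \tilt(\mu)$, whereas you rederive it from co-$t$-exactness and the classification of indecomposables; either is fine.

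The issue is in the case $\lambda \notin -\bX^+$, and your proposed strategy is more involved than necessary.  Your worry about ``mixed summands'' of $\pi_*\tfE_\lambda$ never materializes: this is precisely the content of ``the discussion in the proof of Theorem~\ref{thm:indec-silt}'' that the paper invokes.  Theorem~\ref{thm:full-push} (applied with $\cF = \cG = \tfE_\lambda$) yields a surjective ring homomorphism $\End(\tfE_\lambda) \twoheadrightarrow \End(\pi_*\tfE_\lambda)$.  Since $\End(\tfE_\lambda)$ is a finite-dimensional local $\bk$-algebra, its quotient is either zero or again local, so $\pi_*\tfE_\lambda$ is either $0$ or indecomposable.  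This single observation dissolves the entire ``cancellation among summands'' difficulty you anticipate.

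To finish, suppose for contradiction that $\pi_*\tfE_\lambda$ is indecomposable; by Theorem~\ref{thm:indec-silt}\eqref{it:indec-class} it is then $\fE_\nu\{r\} = \pi_*(\tfE_{w_0\nu})\{r\}$ for some $\nu \in \bX^+$ and $r \in \Z$.  Apply Theorem~\ref{thm:full-push} once more to the pairs $(\tfE_\lambda, \tfE_{w_0\nu}\{r\})$ and $(\tfE_{w_0\nu}\{r\}, \tfE_\lambda)$ to lift the mutually inverse isomorphisms on $\cN$ to maps $f$, $g$ on $\tcN$.  Then $gf \in \End(\tfE_\lambda)$ has $\pi_*(gf) = \id$, which is not nilpotent; since the maximal ideal of the finite-dimensional local ring $\End(\tfE_\lambda)$ is nilpotent, $gf$ is a unit, and likewise $fg$.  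Hence $\tfE_\lambda \cong \tfE_{w_0\nu}\{r\}$ already on $\tcN$, forcing $\lambda = w_0\nu \in -\bX^+$ and $r = 0$, a contradiction.  Your route via $\pi_*\iota_\lambda = 0$ and summand-by-summand obstruction can be discarded.
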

\begin{proof}
If $\mu$ is not of the form $w_\lambda \cdot_p 0$, then $\pr_\varnothing(\tilt(\mu)) = 0$, and the claim is obvious.  Assume now that $\mu = w_\lambda \cdot_p 0$ for some $\lambda \in \bX$.  As explained in the proof of~\cite[Proposition~9.1]{achar-hardesty-riche}, We have $F_\varnothing(\xi_\varnothing(\tfE_\lambda)) \cong \tilt(\mu)$, and hence
\[
\mbf{H}_\varnothing(\tilt(\mu)) \cong \ol{\pi}_* \xi_\varnothing(\tfE_\lambda) \cong \xi_\cN(\pi_* \tfE_\lambda) \cong
\begin{cases}
\xi_\cN( \fE_{w_0\lambda}) & \text{if $\lambda \in -\bX^+$,} \\
0 & \text{otherwise,}
\end{cases}
\]
where the last equality holds by Theorem~\ref{thm:indec-silt}.
\end{proof}

\begin{rmk}
Combining the preceding lemma with Theorem~\ref{thm:indec-silt} yields the following remarkable observation: the cohomology of an \emph{indecomposable tilting module} coincides with the cohomology of a uniquely
determined \emph{indecomposable silting object} of 
$D^G_\Coh(\cN)$.
\end{rmk}

\subsection{Main results}

The following lemma describes the relationship between the various functors introduced so far in this section.

\begin{lem}\label{lem:hom-nat-iso}
There exists a natural transformation 
\[
\pmb{\eta}: \pmb{\Xi}^I  \rightarrow \pmb{\Sigma}^I
\]
of functors $\Db\Rep_I(\mbf{G}) \to \Db\Rep_\varnothing(\mbf{G})$ such that the induced transformation
\[
\mbf{H}_\varnothing\pmb{\eta}:  
\mbf{H}_\varnothing\pmb{\Xi}^I \rightarrow \mbf{H}_\varnothing\pmb{\Sigma}^I,
\]
of functors $\Db\Rep_I(\mbf{G}) \to D^G_\Coh(\cN)$ is a natural isomorphism. 
\end{lem}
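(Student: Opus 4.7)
The strategy is to first observe that $\mbf{H}_\varnothing \circ \pmb{\Xi}^I$ simplifies via the commutative diagram~\eqref{eqn:xi-commute-dg}, then to construct $\pmb{\eta}$ using the adjunctions $(\pmb{\Sigma}_I, \pmb{\Sigma}^I)$ and $(\pmb{\Xi}_I, \pmb{\Xi}^I)$, and finally to verify the isomorphism property by comparing cohomology via Lemma~\ref{prop:varphi-coh}. The key initial observation is that, since $F_\varnothing$ lands in $\Db\Rep_\varnothing(\mbf{G})$, we have $\pr_\varnothing \circ F_\varnothing = F_\varnothing$, so
\[
\mbf{H}_\varnothing \circ \pmb{\Xi}^I = \bar\pi_* F_\varnothing^{-1} \pr_\varnothing F_\varnothing \bar\Xi^I F_I^{-1} = \bar\pi_* \bar\Xi^I F_I^{-1} = \bar\pi_{I*} F_I^{-1} = \mbf{H}_I.
\]

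To construct $\pmb{\eta}\colon \pmb{\Xi}^I \to \pmb{\Sigma}^I$, the plan is to first build a natural transformation $\pmb{\alpha}\colon \pmb{\Sigma}_I \to \pmb{\Xi}_I$, and then define $\pmb{\eta}$ as the morphism corresponding, under the adjunction $(\pmb{\Sigma}_I, \pmb{\Sigma}^I)$, to the composition
\[
\pmb{\Sigma}_I \pmb{\Xi}^I \xrightarrow{\pmb{\alpha} \pmb{\Xi}^I} \pmb{\Xi}_I \pmb{\Xi}^I \xrightarrow{\epsilon} \id,
\]
where $\epsilon$ is the counit of $(\pmb{\Xi}_I, \pmb{\Xi}^I)$.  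The starting point for $\pmb{\alpha}$ is the isomorphism $\pmb{\Xi}_I(\bk) \cong \St_I^\mbf{G}$, which follows from $\bar\mu_{I*} \bar e_I^* \cO_{\tcN} \cong \cO_{\tcN_I}$ (a consequence of the computation inside the proof of Lemma~\ref{lem:xi-commute}, specialized to the trivial line bundle) together with $F_I(\cO_{\tcN_I}) \cong \St_I^\mbf{G}$.  Since $\bar\Xi_I = \bar\mu_{I*} \bar e_I^*$ commutes naturally with tensoring by pullbacks of $\mbf{G}$-modules, this yields a natural morphism $N \otimes \St_I^\mbf{G} \to \pmb{\Xi}_I(N)$ in $\Db\Rep(\mbf{G})$, and precomposing with $\pr_I$ gives the desired $\pmb{\alpha}_N\colon \pmb{\Sigma}_I(N) \to \pmb{\Xi}_I(N)$.

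To verify that $\mbf{H}_\varnothing \pmb{\eta}$ is a natural isomorphism, I would pass to cohomology. On the left, the reduction above and Lemma~\ref{prop:varphi-coh}\eqref{it:varphi-coh-gen} applied to $I$ give $H^\bullet \mbf{H}_\varnothing \pmb{\Xi}^I(M) \cong \bigoplus_k \Hom_{\mbf{G}_{1}}(\St_I^\mbf{G}, M[k])$; on the right, Lemma~\ref{prop:varphi-coh}\eqref{it:varphi-coh-gen} applied to $\varnothing$ together with the tensor-hom adjunction yields the same graded module.  It then remains to check that the induced map is precisely the tensor-hom isomorphism, by unwinding the construction of $\pmb{\eta}$ and $\pmb{\alpha}$.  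Since $D^G_\Coh(\cN)$ is the derived category of a dg algebra, an isomorphism on cohomology is automatically an isomorphism there.

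The main obstacle I anticipate is this final identification: showing that $\mbf{H}_\varnothing \pmb{\eta}$ really induces the tensor-hom adjunction isomorphism on cohomology, which requires carefully tracking $\pmb{\alpha}$ (and hence the tensor-compatibility of $\bar\Xi_I$) through the Koszul-duality equivalence $F_I$ and the push-forward $\bar\pi_*$.  A secondary subtlety is justifying the identification $\Hom_{\mbf{G}_{1}}(\bk, \pr_\varnothing(X)) \cong \Hom_{\mbf{G}_{1}}(\bk, X)$, needed when unwinding the right-hand cohomology; this should follow from a linkage-type argument using that $\mbf{G}_{1}$-fixed subspaces have weights in $p\bX$.
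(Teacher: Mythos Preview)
Your overall architecture matches the paper's: build a natural transformation $\pmb{\Sigma}_I \to \pmb{\Xi}_I$ on the left adjoints, transport it by adjunction to $\pmb{\eta}\colon \pmb{\Xi}^I \to \pmb{\Sigma}^I$, and then verify the isomorphism after applying $\mbf{H}_\varnothing$ by passing to cohomology via Lemma~\ref{prop:varphi-coh}.  Your reduction $\mbf{H}_\varnothing \pmb{\Xi}^I \cong \mbf{H}_I$ is also correct and useful.

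The gap is in your construction of $\pmb{\alpha}$.  You write that the geometric projection-formula identity ``$\bar\Xi_I$ commutes with tensoring by pullbacks of $\mbf{G}$-modules'' yields a natural map $N \otimes \St_I^{\mbf{G}} \to \pmb{\Xi}_I(N)$, but this does not follow.  The tensor compatibility lives on the coherent side, while $\pmb{\Xi}_I = F_I\,\bar\Xi_I\,F_\varnothing^{-1}$ lives on the representation side; to transport it you would need $F_\varnothing$ and $F_I$ to intertwine the two tensor actions, and no such compatibility is available (indeed, the paper explicitly notes that no geometric description of $\Sigma_I$ is known).  For a general $N \in \Db\Rep_\varnothing(\mbf{G})$, the object $F_\varnothing^{-1}(N)$ is not of the form $\cO_{\tcN} \otimes V$, so knowing $\bar\Xi_I(\cO_{\tcN}) \cong \cO_{\tcN_I}$ and projection-formula behavior of $\bar\Xi_I$ does not produce a map out of $N \otimes \St_I^{\mbf{G}}$.

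The paper circumvents this by opening up the definition of $F_I$: it factors as $R\Ind_{\mbf{P}_I}^{\mbf{G}} \circ \psi_I$, and a commutative square from \cite{achar-riche} gives the concrete formula
\[
\pmb{\Xi}_I \circ R\Ind_{\mbf{B}}^{\mbf{G}} \;\cong\; \pr_I \circ R\Ind_{\mbf{B}}^{\mbf{G}}\bigl(({-}) \otimes (p-1)\varsigma_I\bigr)
\]
on $\Db_{\mathrm{Stein}}\Rep(\mbf{B})$, while the tensor identity gives $\pmb{\Sigma}_I \circ R\Ind_{\mbf{B}}^{\mbf{G}} \cong \pr_I \circ R\Ind_{\mbf{B}}^{\mbf{G}}(({-}) \otimes \St_I^{\mbf{G}})$.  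The Frobenius-reciprocity map $p_I\colon \St_I^{\mbf{G}} \to (p-1)\varsigma_I$ of $\mbf{B}$-modules then induces $\pmb{\gamma}\colon \pmb{\Sigma}_I \to \pmb{\Xi}_I$ (using that $R\Ind_{\mbf{B}}^{\mbf{G}}$ is an equivalence onto $\Db\Rep_\varnothing(\mbf{G})$).  This replaces your unproved tensor-compatibility step with an explicit representation-theoretic identity.

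This also makes the final verification much cleaner than you anticipate: one checks directly that $\pmb{\gamma}_{\bk}$ is an isomorphism (both sides are $\St_I^{\mbf{G}}$ and the map is the identity), and then the adjunction square
\[
\begin{tikzcd}
\Hom_{\mbf{G}_{1}}(\bk,\pmb{\Xi}^I(-)) \ar[r, "\pmb{\eta}_*"] \ar[d, "\wr"'] & \Hom_{\mbf{G}_{1}}(\bk,\pmb{\Sigma}^I(-)) \ar[d, "\wr"] \\
\Hom_{\mbf{G}_{1}}(\pmb{\Xi}_I(\bk),-) \ar[r, "\pmb{\gamma}_{\bk}^*", "\sim"'] & \Hom_{\mbf{G}_{1}}(\pmb{\Sigma}_I(\bk),-)
\end{tikzcd}
\]
immediately gives that $\pmb{\eta}_*$ is an isomorphism on $\mbf{G}_1$-cohomology, hence $\mbf{H}_\varnothing\pmb{\eta}$ is an isomorphism by Lemma~\ref{prop:varphi-coh}.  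No ``tracking through Koszul duality and push-forward'' is needed.
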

\begin{proof}
For this proof, we need slightly more information about $F_I$.  Let $\mbf{P}_I$ be the standard parabolic subgroup of $\mbf{G}$ corresponding to $I$. Then $F_I$ is the composition of two equivalences
\begin{equation}\label{eqn:induc-factorization}
D^G_\Coh(\tcN_I) \xrightarrow[\sim]{\psi_I} \Db_{\mathrm{Stein}}\Rep(\mbf{P}_I) \xrightarrow[\sim]{R\Ind_{\mbf{P}_I}^\mbf{G}} \Db\Rep_I(\mbf{G}).
\end{equation}
See~\cite[\S 5]{achar-riche} for the definition of $\Db_{\mathrm{Stein}}\Rep(\mbf{P}_I)$.  We will not need the details, except to note that this category is a full triangulated subcategory of $\Db\Rep(\mbf{P}_I)$.  According to~\cite[Propositions~7.5 and 9.25]{achar-riche} and their proofs (see especially~\cite[Figure~9]{achar-riche}), there is a commutative diagram
\[
\begin{tikzcd}
D^G_\Coh(\tcN) \ar[d, "\ol{\Xi}_I"'] \ar[r, "\psi_\varnothing"] & \Db_{\mathrm{Stein}}\Rep(\mbf{B}) \ar[d, "R\Ind_{\mbf{B}}^{\mbf{P}_I}(({-}) \otimes (p-1)\varsigma_I)"] \\
D^G_\Coh(\tcN_I) \ar[r, "\psi_I"] & \Db_{\mathrm{Stein}}\Rep(\mbf{P}_I)
\end{tikzcd}
\]
Using the fact that $F_\varnothing^{-1} \circ R\Ind_{\mbf{B}}^{\mbf{G}} \cong \psi_\varnothing^{-1}$, the commutativity of the diagram above can be expressed as the existence of a natural isomorphism
\[
\pmb{\Xi_I}\circ R\Ind_{\mbf{B}}^{\mbf{G}} \cong \pr_I\circ R\Ind_{\mbf{B}}^{\mbf{G}}( - \otimes (p-1)\varsigma_I): \Db_{\mathrm{Stein}}(\mbf{B}) \to \Db\Rep_I(\mbf{G}).
\]
On the other hand, it can be deduced from the tensor identity (see \cite[I.3.6]{jantzen}) that 
\[
\pmb{\Sigma}_I \circ R\Ind_{\mbf{B}}^{\mbf{G}} \cong \pr_I\circ R\Ind_{\mbf{B}}^{\mbf{G}}( - \otimes \St_I^{\mbf{G}}).
\]
(This isomorphism holds for all $\Db\Rep(\mbf{B})$, not just $\Db_{\mathrm{Stein}}\Rep(\mbf{B})$.)

Since $\St_I^{\mbf{G}} = \Ind_{\mbf{B}}^{\mbf{G}}((p-1)\varsigma_I)$,
 there exists a canonical non-zero map 
\[
p_I: \St_I^{\mbf{G}} \rightarrow (p-1)\varsigma_I,
\]
which corresponds to $\id_{\St_I^{\mbf{G}}}$ under Frobenius Reciprocity (see \cite[I.3.4]{jantzen}). 
From the formulas above, this map induces a natural transformation $\pmb{\Sigma}_I \circ R\Ind_{\mbf{B}}^{\mbf{G}} \to 
\pmb{\Xi_I}\circ R\Ind_{\mbf{B}}^{\mbf{G}}$ of functors $\Db_{\mathrm{Stein}}\Rep(\mbf{B}) \to \Db\Rep_I(\mbf{G})$.  Since $R\Ind_{\mbf{B}}^{\mbf{G}}: \Db_{\mathrm{Stein}}\Rep(\mbf{B}) \to \Db\Rep_\varnothing(\mbf{G})$ is an equivalence of categories (cf.~\eqref{eqn:induc-factorization}), we obtain a natural transformation
\[
\pmb{\gamma}:
	\pmb{\Sigma}_I \rightarrow \pmb{\Xi}_I. 
\]

Thus, we can define 
\[
\pmb{\eta}: \pmb{\Xi}^I  \longrightarrow \pmb{\Sigma}^I
\]
to be the natural transformation given by applying Lemma~\ref{lem:adjoint-trans} below to $\pmb{\gamma}$. 
To show that this induces
 the desired natural isomorphism, observe that 
 \[
{F_I}\,\Xi_I(\cO_{\tcN}) = {F_I}(\cO_{\tcN_I}) = \St_I^{\mbf{G}}.
 \]
 And since 
 $F_\varnothing^{-1}(\bk) = \cO_{\tcN}$, we get
 \[
 \mbf{\Xi}_I(\bk) = \St_I^{\mbf{G}}.
 \]
 Moreover, 
 \[
 \pmb{\gamma}_{\bk}: \pmb{\Sigma}_I(\bk) \xrightarrow{} \pmb{\Xi}_I(\bk)
 \]
 is an isomorphism which coincides with the identity map on $\St_I^{\mbf{G}}$. 
 
This leads to a commutative diagram of natural transformations
 \begin{equation*}
\begin{tikzcd}
\Hom_{\mbf{G}_{1}}(\bk,\pmb{\Xi}^I(-)) \ar[r, "\pmb{\eta}_*"] \ar[d, "\wr"']
    & \Hom_{\mbf{G}_{1}}(\bk,\pmb{\Sigma}^I(-)) \ar[d, "\wr"] \\
\Hom_{\mbf{G}_{1}}(\pmb{\Xi}_I(\bk),-) \ar[r, "\pmb{\gamma}^*", "\sim"'] &  \Hom_{\mbf{G}_{1}}(\pmb{\Sigma}_I(\bk),-),
\end{tikzcd}
\end{equation*}
where the vertical maps arise from the adjunction isomorphism.  
Notice that the transformation $\pmb{\eta}_*$ is a natural isomorphism since every other map in the diagram 
is an isomorphism. 

To see why  $\mbf{H}_\varnothing\pmb{\eta}$ is an isomorphism, notice that if we
 let  $M \in \Db\Rep_I(\mbf{G})$ be an arbitrary object, then it follows from Lemma~\ref{lem:varphi-coh}
  that for any $k \in \Z$,
 the morphism
\[
H^k\mbf{H}_\varnothing\pmb{\eta}: H^k\mbf{H}_\varnothing(\pmb{\Xi}^I(M)) \rightarrow 
 H^k\mbf{H}_\varnothing(\pmb{\Sigma}^I(M)),
\]
identifies with the isomorphism 
\[
\pmb{\eta}_*: \Hom_{\mbf{G}_{1}}(\bk, \pmb{\Xi}^I(M)[k]) \xrightarrow{\sim} \Hom_{\mbf{G}_{1}}(\bk, \pmb{\Sigma}^I(M)[k])
\]
given above. 
\end{proof}

\begin{lem}\label{lem:adjoint-trans}
Let $\fC$ and $\fD$ be categories with functors
\[
\xymatrix{
\fC \ar@<1ex>[r]^{F_1,\, F_2} 
	& \fD \ar@<1ex>[l]^{G_1,\, G_2} 
}
\]
such that $F_i$ is left adjoint to $G_i$ (for $i = 1,2$). Then the natural isomorphisms
\[
\Hom_\fD(F_i(-), -) \xrightarrow{\sim} \Hom_\fC(-,G_i(-)), \quad i=1,2
\]
induce a bijection
\[
\{\text{natural transformations $F_1 \to F_2$}\} \simto
\{\text{natural transformations $G_2 \to G_1$}\}.
\]
\end{lem}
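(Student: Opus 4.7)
The plan is to invoke the classical theory of \emph{mates under adjunctions}. For $i=1,2$, let $\eta_i : \id_\fC \to G_iF_i$ and $\epsilon_i : F_iG_i \to \id_\fD$ denote the unit and counit of the adjunction $(F_i, G_i)$. Given a natural transformation $\alpha : F_1 \to F_2$, I define $\alpha^\flat : G_2 \to G_1$ as the composition
\[
G_2 \xrightarrow{\eta_1 G_2} G_1 F_1 G_2 \xrightarrow{G_1 \alpha G_2} G_1 F_2 G_2 \xrightarrow{G_1 \epsilon_2} G_1.
\]
Conversely, given $\beta : G_2 \to G_1$, I define $\beta^\sharp : F_1 \to F_2$ as the composition
\[
F_1 \xrightarrow{F_1 \eta_2} F_1 G_2 F_2 \xrightarrow{F_1 \beta F_2} F_1 G_1 F_2 \xrightarrow{\epsilon_1 F_2} F_2.
\]
Both assignments are clearly functorial in the domains/codomains and respect naturality, so the first step is just to record that they produce well-defined natural transformations.

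The main step is to check that these two operations are mutually inverse. Unwinding $(\alpha^\flat)^\sharp$ at an object $c \in \fC$ gives a composition of five arrows, which I would simplify by applying the triangle identity $\epsilon_2 F_2 \circ F_2 \eta_2 = \id_{F_2}$ in the middle to collapse two of them, followed by using naturality of $\alpha$ (to commute $\alpha$ past an $F_2\eta_2$), and finally applying the other triangle identity $\epsilon_1 F_1 \circ F_1 \eta_1 = \id_{F_1}$ at the front. This leaves $\alpha_c$ itself. The verification that $(\beta^\sharp)^\flat = \beta$ is entirely symmetric, swapping the roles of the units/counits and of $F_i,G_i$.

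Alternatively — and perhaps more cleanly — one can invoke the Yoneda lemma in two variables. The adjunction isomorphisms yield natural isomorphisms of bifunctors
\[
\Phi_i : \Hom_\fD(F_i(-),-) \xrightarrow{\sim} \Hom_\fC(-, G_i(-)) : \fC^{\mathrm{op}} \times \fD \to \mathsf{Set}.
\]
A natural transformation $\alpha : F_1 \to F_2$ induces a natural transformation $\alpha^* : \Hom_\fD(F_2(-),-) \to \Hom_\fD(F_1(-),-)$, and conjugating by $\Phi_1, \Phi_2$ gives a natural transformation $\Hom_\fC(-,G_2(-)) \to \Hom_\fC(-,G_1(-))$, which by the (parametrized) Yoneda lemma is the same data as a natural transformation $G_2 \to G_1$. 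The assignment is a bijection because $\alpha \mapsto \alpha^*$ is, and $\Phi_1, \Phi_2$ are isomorphisms.

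The only real obstacle is bookkeeping with whiskered composites; there is no genuinely hard content, since everything reduces to the triangle identities. I would favor the Yoneda-style argument in the write-up, since it makes the bijectivity automatic without a triangle-identity chase, while recording the explicit formulas $\alpha \mapsto \alpha^\flat$ and $\beta \mapsto \beta^\sharp$ for later use in applying the lemma to $\pmb{\gamma} : \pmb{\Sigma}_I \to \pmb{\Xi}_I$ in the proof of Lemma~\ref{lem:hom-nat-iso}.
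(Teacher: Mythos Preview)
Your proposal is correct, and your Yoneda-style argument is precisely what the paper does: its entire proof reads ``This is a routine application of Yoneda's lemma.'' Your write-up is considerably more detailed than the paper's, and the explicit mate formulas $\alpha^\flat$, $\beta^\sharp$ you record are indeed what is implicitly used when the lemma is applied to $\pmb{\gamma}$ in Lemma~\ref{lem:hom-nat-iso}.
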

\begin{proof}
This is a routine application of Yoneda's lemma. 
\end{proof}

The following statement, which is the main result of this section, is now essentially just a reformulation of the preceding lemma.

\begin{thm}\label{thm:pushforward-stein-commute}
For any $I\subset S$, there are natural isomorphisms 
\[
\ol{\pi}_*\circ \bar\Xi^I  \cong \ol{\pi}_*\circ \Sigma^I: D^G_\Coh(\tcN_I) \rightarrow D^G_\Coh(\cN).
\]
\end{thm}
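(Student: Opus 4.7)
The plan is to deduce the theorem almost immediately from Lemma~\ref{lem:hom-nat-iso} by unwinding the definitions of the various transport functors. Indeed, by construction we have $\pmb{\Xi}^I = F_\varnothing \circ \bar\Xi^I \circ F_I^{-1}$ and $\pmb{\Sigma}^I = F_\varnothing \circ \Sigma^I \circ F_I^{-1}$. Since $\Sigma^I$ and $\bar\Xi^I$ take values in $D^G_\Coh(\tcN)$, which corresponds under $F_\varnothing$ to the extended principal block $\Db\Rep_\varnothing(\mbf{G})$, applying $\mbf{H}_\varnothing = \ol{\pi}_* \circ F_\varnothing^{-1} \circ \pr_\varnothing$ to these two composites yields
\[
\mbf{H}_\varnothing \circ \pmb{\Xi}^I \cong \ol{\pi}_* \circ \bar\Xi^I \circ F_I^{-1},
\qquad
\mbf{H}_\varnothing \circ \pmb{\Sigma}^I \cong \ol{\pi}_* \circ \Sigma^I \circ F_I^{-1},
\]
since the projection $\pr_\varnothing$ acts as the identity on the image of $F_\varnothing$ restricted to $\Db\Rep_\varnothing(\mbf{G})$.

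First, I would apply Lemma~\ref{lem:hom-nat-iso} to obtain the natural isomorphism
\[
\mbf{H}_\varnothing \pmb{\eta} : \mbf{H}_\varnothing \pmb{\Xi}^I \simto \mbf{H}_\varnothing \pmb{\Sigma}^I
\]
of functors $\Db\Rep_I(\mbf{G}) \to D^G_\Coh(\cN)$. Substituting the identifications above, this becomes a natural isomorphism
\[
\ol{\pi}_* \circ \bar\Xi^I \circ F_I^{-1} \simto \ol{\pi}_* \circ \Sigma^I \circ F_I^{-1}.
\]

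Next, since $F_I : D^G_\Coh(\tcN_I) \simto \Db\Rep_I(\mbf{G})$ is an equivalence of categories by Theorem~\ref{thm:ar}, precomposing both sides with $F_I$ yields the desired natural isomorphism
\[
\ol{\pi}_* \circ \bar\Xi^I \simto \ol{\pi}_* \circ \Sigma^I
\]
of functors $D^G_\Coh(\tcN_I) \to D^G_\Coh(\cN)$.

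There is essentially no obstacle here, as all the real work has already been done in Lemma~\ref{lem:hom-nat-iso}: constructing the natural transformation $\pmb{\gamma} : \pmb{\Sigma}_I \to \pmb{\Xi}_I$ from the canonical map $p_I : \St_I^{\mbf{G}} \to (p-1)\varsigma_I$, and verifying via the adjunction diagram that the induced transformation on $\Hom_{\mbf{G}_1}(\bk, -)$ is an isomorphism. The theorem itself is simply the statement obtained by transporting $\mbf{H}_\varnothing \pmb{\eta}$ back through the Achar--Riche equivalence $F_I$.
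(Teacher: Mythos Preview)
Your proposal is correct and follows essentially the same argument as the paper: unwind the definitions to identify $\mbf{H}_\varnothing \pmb{\Xi}^I$ and $\mbf{H}_\varnothing \pmb{\Sigma}^I$ with $\ol{\pi}_* \bar\Xi^I$ and $\ol{\pi}_* \Sigma^I$ (up to composition with the equivalence $F_I$), then invoke Lemma~\ref{lem:hom-nat-iso}. The paper's proof is just a one-line chain of identifications doing exactly this.
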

\begin{proof}
By the definitions and Lemma~\ref{lem:hom-nat-iso}, we have
\[
\ol{\pi}_* \bar\Xi^I = \ol{\pi}_* F_\varnothing^{-1} \pmb{\Xi}^I F_I 
= \mbf{H}_\varnothing \pmb{\Xi}^I F_I \cong \mbf{H}_\varnothing \pmb{\Sigma}^I F_I \cong \ol{\pi}_* F_\varnothing^{-1}  \pmb{\Sigma}^I F_I = \ol{\pi}_*\Sigma^I.\qedhere
\]
\end{proof}


\begin{cor}\label{cor:para-co-t}
The functors $\pi_{I*}$ and $\ol{\pi}_{I*}$ are co-t-exact for any $I \subset S$. 
\end{cor}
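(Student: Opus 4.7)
The plan is to deduce this corollary from Theorem~\ref{thm:pushforward-stein-commute} combined with the co-$t$-exactness of $\ol{\pi}_*$ and $\Sigma^I$, and then transfer the result from the dg setting back to the graded setting.

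First, I would observe that the commutative triangle~\eqref{eqn:xi-commute-dg} can be rewritten as $\ol{\pi}_{I*} \cong \ol{\pi}_* \circ \bar\Xi^I$. Combining this with Theorem~\ref{thm:pushforward-stein-commute} gives a natural isomorphism
\[
\ol{\pi}_{I*} \cong \ol{\pi}_* \circ \Sigma^I : D^G_\Coh(\tcN_I) \to D^G_\Coh(\cN).
\]
Hence, to prove co-$t$-exactness of $\ol{\pi}_{I*}$, it suffices to show that each of the two factors on the right is co-$t$-exact. For $\ol{\pi}_*$, this is Lemma~\ref{lem:co-t-structure-N}\eqref{it:xiN-cot}.

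For $\Sigma^I$, I would argue on the representation-theoretic side. By Remark~\ref{rmk:F-cot-exact}, each $F_J$ (for $J = I$ or $J = \varnothing$) is an equivalence of co-$t$-structures, where $\Db\Rep_J(\mbf{G})$ is equipped with the co-$t$-structure whose coheart is $\Tilt(\Rep_J(\mbf{G}))$. Since $\Sigma^I = F_\varnothing^{-1} \circ \pmb{\Sigma}^I \circ F_I$, it is enough to check that $\pmb{\Sigma}^I = \pr_\varnothing \circ ({-} \otimes (\St_I^{\mbf{G}})^*)$ preserves tilting modules. This is standard: the Steinberg-type module $\St_I^{\mbf{G}} = \tilt((p-1)\varsigma_I)$ is itself a tilting module, its dual is tilting, the tensor product of a tilting module with a tilting module is tilting (as the characteristic is good), and the block projection $\pr_\varnothing$ preserves tiltings since it is a direct summand of the identity. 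Thus $\Sigma^I$ is co-$t$-exact, proving co-$t$-exactness of $\ol{\pi}_{I*}$.

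To conclude for $\pi_{I*}$, I would use the compatibility $\xi_\cN \circ \pi_{I*} \cong \ol{\pi}_{I*} \circ \xi_I$ together with the ``silting detection'' criterion in Lemma~\ref{lem:co-t-structure-N}\eqref{it:xiI-cot}: an object of $\Db\Coh^{G \times \Gm}(\tcN_I)$ is silting if and only if its image under $\xi_I$ is silting in $D^G_\Coh(\tcN_I)$, and similarly for $\cN$. Given $\cF \in \cS^{G \times \Gm}(\tcN_I)$, the object $\xi_I(\cF)$ is silting, hence $\ol{\pi}_{I*}(\xi_I(\cF)) \cong \xi_\cN(\pi_{I*}\cF)$ is silting, which by the same criterion forces $\pi_{I*}\cF$ to lie in $\cS^{G \times \Gm}(\cN)$. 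The main obstacle is really step two, the verification that $\Sigma^I$ is co-$t$-exact; all else is formal bookkeeping once Theorem~\ref{thm:pushforward-stein-commute} and Lemma~\ref{lem:co-t-structure-N} are in hand.
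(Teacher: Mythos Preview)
Your proposal is correct and follows essentially the same route as the paper: factor $\ol{\pi}_{I*}$ as $\ol{\pi}_* \circ \Sigma^I$ via~\eqref{eqn:xi-commute-dg} and Theorem~\ref{thm:pushforward-stein-commute}, check co-$t$-exactness of each factor (using Lemma~\ref{lem:co-t-structure-N} for $\ol{\pi}_*$ and the tilting-preservation argument for $\pmb{\Sigma}^I$), and then descend to $\pi_{I*}$ via the silting-detection criterion in Lemma~\ref{lem:co-t-structure-N}. The only cosmetic difference is the order of presentation; the content is the same.
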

\begin{proof}

We begin by showing that $\Sigma^I: D^G_\Coh(\tcN_I) \to D^G_\Coh(\tcN)$ is co-$t$-exact.  Since the equivalences $F_I$ and $F_\varnothing$ are co-$t$-exact (see Remark~\ref{rmk:F-cot-exact}), this claim is equivalent to showing that $\bS^I: \Db\Rep_I(\mbf{G}) \to \Db\Rep_\varnothing(\mbf{G})$ is co-$t$-exact.  Here, these derived categories are equipped with the co-$t$-structures whose cohearts consist of tilting $\mbf{G}$-modules.  In other words, we must show that $\bS^I$ sends tilting $\mbf{G}$-modules to tilting $\mbf{G}$-modules.  This claim is immediate from the definition and the fact that $\St_I^{\mbf{G}}$ is a tilting $\mbf{G}$-module.  Thus, $\Sigma^I$ is co-$t$-exact.

By~\eqref{eqn:xi-commute-dg}, we have $\ol{\pi}_{I*} \cong \ol{\pi}_* \bar \Xi^I$, so Theorem~\ref{thm:pushforward-stein-commute} gives $\ol{\pi}_{I*} \cong \ol{\pi}_*\circ \Sigma^I$.  Since $\ol{\pi}_*$ is co-$t$-exact (see Lemma~\ref{lem:co-t-structure-N}), we conclude that $\ol{\pi}_{I*}$ is co-$t$-exact.

It remains to prove the co-$t$-exactness of $\pi_{I*}$.  Lemma~\ref{lem:co-t-structure-N} implies that this is co-$t$-exact if and only if $\xi_\cN\pi_{I*}$ is co-$t$-exact.  The latter functor is isomorphic to $\ol{\pi}_{I*}\xi_I$, which we already know to be co-$t$-exact.
\end{proof}

\section{The scheme-theoretic Humphreys conjecture}
\label{sec:scheme-hum}

In this section, we continue to assume that $p>h$. Recall that for $\lambda \in \bX$, we write $w_\lambda$ for the element of minimal length in the coset $Wt_\lambda \subset \Waff$.  To state the Humphreys conjecture, we will need some facts about right Kazhdan--Lusztig cells in $\Waff$.\footnote{Kazhdan--Lusztig cells are usually considered in the context of the \emph{nonextended} affine Weyl group $W \ltimes \Z\Phi$, rather than in $\Waff$.  However, as explained in~\cite[Remark~2.1]{ctmap} it is straightforward to define Kazhdan--Lusztig cells in $\Waff$, and they are in bijection with those in $W \ltimes \Z\Phi$.} A right Kazhdan--Lusztig cell is called \emph{antispherical} if it contains some element of the form $w_\lambda$.  By results of Lusztig~\cite{lus:cawg4} and Lusztig--Xi~\cite{lx:clc}, the antispherical right cells are in bijection with the set of $G$-orbits in $\cN$.  Given a $G$-orbit $C \subset \cN$, let
\[
\bX_C = \{ \lambda \in \bX \mid \text{$w_\lambda$ lies in the antispherical right cell corresponding to $C$} \}.
\]
We thus obtain a partition of $\bX$ indexed by nilpotent orbits:
\[
\bX = \bigsqcup_{\substack{C \subset \cN \\ \text{a $G$-orbit}}} \bX_C.
\]
We also set $\bX^+_C := w_0(\bX_C) \cap \bX^+$. It follows from \cite[Theorem~6.4]{modular-LV} and~\cite[Remark~6]{bez-psaf}\footnote{In more detail,~\cite[Theorem~6.4]{modular-LV} says that the support of $\fL_\lambda$ is ``independent of $\bk$'' in an appropriate sense, and~\cite[Remark~6]{bez-psaf} says that the support of the analogue of $\fL_\lambda$ over $\C$ is the closure of the nilpotent orbit assigned to the antispherical cell containing $w_{w_0\lambda}$.  (Note that the definition of the notation ``$w_\lambda$'' in~\cite{bez-psaf} is different from ours; the $w_0$ does not appear in the statement there.)} 
that 
\[
\bX^+_C = \{ \lambda \in \bX^+ \mid \supp\fL_\lambda = \ol{C} \} \subset \bX^+,
\]
as long as $p$ is good. 
We will call $\bX^+_C$ the \emph{canonical cell} corresponding to $C$. 
If $Z \subseteq \cN$ is a $G$-stable subspace, we set 
\[
\bX_{Z} = \bigcup_{C \subset Z} \bX_{C}, \quad \bX^+_{Z} = \bigcup_{C \subset Z} \bX^+_{C}.
\]
\subsection{Scheme-theoretic Humphreys conjecture}
We begin by giving a scheme-theoretic analogue of the classical Humphreys conjecture.  Let
\[
\cC_p = \{ \lambda \in \bX \mid \text{$0 < \la \lambda+\rho,\alpha^\vee \ra < p$ for all positive roots $\alpha \in \Phi^+$} \}
\]
be the \emph{fundamental alcove}.  For any $w \in \Waff$, one can consider the set $w \cdot_p \cC_p$.  Any such set is called an \emph{alcove}.

For a nilpotent orbit $C \subset \cN$, let $\cI_{\ol{C}} \subset \bk[\cN]$ denote the defining ideal of the reduced subscheme $\ol{C} \subset \cN$.  We propose the following refinement of the conjecture proposed by Humphreys~\cite{hum:cmr} (see also~\cite[Conjecture~8.5]{achar-hardesty-riche} and the discussion preceding it):


\begin{conj}[Scheme-Theoretic Humphreys conjecture]\label{conj:classical-scheme-conj}
Suppose that $\mu \in \bX^+$ belongs to the lower closure of $w_{\lambda} \cdot_p \cC_p$ for some $\lambda \in \bX_C$. Then the annihilator of the $\bk[\cN]$-module $\Ext_{\mbf{G}_{1}}^*(\tilt(\mu), \tilt(\mu))$ is   $\mathcal{I}_{\ol{C}}$. 
\end{conj}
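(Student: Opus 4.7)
My plan is to establish the conjecture in type $A$ for $p > n$, using the machinery developed in this paper; the general statement will then remain conditional on analogous inputs. The containment $\mathrm{Ann} \subseteq \mathcal{I}_{\ol{C}}$ is automatic from the set-theoretic relative Humphreys conjecture (see \cite{achar-hardesty-riche} and \cite{ah2}), since the support of a module over a commutative ring is the vanishing locus of the annihilator. The substantive content is therefore the opposite containment: the canonical ring map $\bk[\cN] \to \Ext^*_{\mbf{G}_{1}}(\tilt(\mu),\tilt(\mu))$ must factor through $\bk[\ol{C}]$ and, moreover, remain \emph{faithful} on $\bk[\ol{C}]$.

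The first major step will be to exploit the fact that in type $A$ every nilpotent orbit is Richardson: choose $I \subset S$ so that $C$ is the Richardson orbit of $P_I$. Then $\pi_I\colon \tcN_I \to \ol{C}$ is a birational resolution of singularities, and since $\mathrm{GL}_n$-nilpotent orbit closures are normal with rational singularities, this forces
\[
\pi_{I*}\cO_{\tcN_I} \cong \cO_{\ol{C}}.
\]
The second major step will be to use the identity $\Ext^*_{\mbf{G}_{1}}(\tilt(\mu),\tilt(\mu)) \cong H^*(\mbf{G}_{1},\tilt(\mu)\otimes\tilt(\mu)^*)$ together with a block decomposition and Steinberg translation (via $\bS^J,\bS_J$) to reduce to computing $\mbf{H}_\varnothing(\tilt(\nu))$ for $\nu = w_\lambda \cdot_p 0$ in the extended principal block, with $\lambda \in \bX_C$. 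Theorem~\ref{thm:pushforward-stein-commute}, combined with Theorem~\ref{thm:indec-silt} and Lemma~\ref{lem:co-t-structure-N}, rewrites this as $\ol{\pi}_{I*}$ of a silting object $\xi_I(\tfE_{I,\nu'})$ on $\tcN_I$. Since the $\bk[\cN]$-module structure on this pushforward factors through $\ol{\pi}_{I*}\cO_{\tcN_I} \cong \cO_{\ol{C}}$, the factorization through $\bk[\ol{C}]$ is immediate.

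The main obstacle will be proving the \emph{faithfulness} of the resulting $\bk[\ol{C}]$-action, i.e., that no nonzero element of $\bk[\ol{C}]$ annihilates the entire silting object $\fE_\lambda$. My strategy will be to restrict to the open dense orbit $C \subset \ol{C}$ and show that $\fE_\lambda|_C$ is a nonzero locally free $\cO_C$-module. Combining Lemma~\ref{lem:silting-parity} with the parity analysis of \cite{achar-hardesty-riche}, $\tfE_{I,\nu'}$ is a parity exotic sheaf whose generic rank along the Richardson resolution is positive; the normality of $\ol{C}$ in type $A$ then propagates this generic non-vanishing to a faithful $\bk[\ol{C}]$-action on the pushforward, via a standard argument using that $\ol{\pi}_{I*}$ of an $\cO_{\tcN_I}$-module whose generic fiber is nonzero cannot be annihilated by any function vanishing on all of $\ol{C}$. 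Outside type $A$, both the failure of the Richardson property for some orbits and the possible non-normality of orbit closures obstruct this final step, which is precisely why the scheme-theoretic conjecture remains open in general.
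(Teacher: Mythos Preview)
Your proposal has a genuine gap at the step where you invoke Theorem~\ref{thm:pushforward-stein-commute}. That theorem says $\ol{\pi}_* \circ \bar\Xi^I \cong \ol{\pi}_* \circ \Sigma^I$, which (together with~\eqref{eqn:xi-commute-dg}) gives $\ol{\pi}_{I*} \cong \ol{\pi}_* \circ \Sigma^I$. This lets you compute $\ol{\pi}_{I*}$ of an object on $\tcN_I$ in terms of data on $\tcN$; it does \emph{not} let you start with an arbitrary silting object $\xi_\cN(\fE_\lambda) = \ol{\pi}_*\xi_\varnothing(\tfE_{w_0\lambda})$ on $\cN$ and exhibit it as $\ol{\pi}_{I*}$ of some $\xi_I(\tfE_{I,\nu'})$. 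For that you would need $\tfE_{w_0\lambda}$ to lie in the essential image of $\Sigma^I$ (equivalently, $\tilt(w_\lambda \cdot_p 0)$ to be a summand of $\bS^I$ applied to a tilting module in $\Rep_I(\mbf{G})$), and you have not argued this. This is precisely the hard step.

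The paper's route is different and avoids this issue. For the relative version (Theorem~\ref{thm:humphreys-richardson}), one does not try to realize $\fE_\lambda$ as a specific parabolic push-forward. Instead, Lemma~\ref{lem:pushforward-surjective} shows that $\Db_{\ol{C_I}}\Coh^{G\times\Gm}(\cN)$ is generated by objects of the form $\pi_{J*}\tfE_{J,\mu}\{n\}$ with $C_J \subset \ol{C_I}$; these are silting by Corollary~\ref{cor:para-co-t}, and a Grothendieck-group argument using the upper-triangularity of $[\fL_\lambda]$ versus $[\fE_\lambda]$ forces every $\fE_\lambda$ with $\lambda \in \bX^+_{\ol{C_I}}$ to occur as a \emph{summand} of such a push-forward. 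That is enough to put the scheme-theoretic support inside $\ol{C_I}$. The classical version (Corollary~\ref{cor:classical-hr}) then follows by decomposing $\tilt(\mu)^* \otimes \tilt(\mu)$ into indecomposable tiltings: each nonzero summand $\Ext^*_{\mbf{G}_{1}}(\bk,\tilt(\nu_i))$ has reduced scheme-theoretic support by the relative result, so the annihilator of the direct sum is an intersection of radical ideals, hence radical, hence equal to $\mathcal{I}_{\ol{C}}$ by the set-theoretic result from~\cite{hardesty}.

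A secondary issue: your ``faithfulness'' paragraph is redundant. Once you know the annihilator is contained in $\mathcal{I}_{\ol{C}}$ (your first paragraph) and contains $\mathcal{I}_{\ol{C}}$ (the factorization through $\bk[\ol{C}]$), you are done; there is nothing further to prove about generic ranks or restrictions to the open orbit. The normality and rational-singularities facts you cite, while true in type $A$, are not used in the paper's argument.
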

Following \cite[\S 8.3]{achar-hardesty-riche}, we also formulate a ``relative" version of this conjecture. 

\begin{conj}\label{conj:scheme-conj}
If $\lambda \in -\bX^+ \cap \bX_C$, then the annihilator of the $\bk[\cN]$-module $\Ext_{\mbf{G}_{1}}^*(\bk, \tilt(w_{\lambda}\cdot_p 0))$ is $\mathcal{I}_{\ol{C}}$.
\end{conj}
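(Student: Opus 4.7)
The plan is to prove Conjecture~\ref{conj:scheme-conj} in the special case $\mbf{G}=\mathrm{GL}_n$, by combining Hardesty's (set-theoretic) relative Humphreys conjecture with the co-$t$-structure machinery of \S\ref{sec:pushforward}--\S\ref{sec:repG}. First, Lemma~\ref{prop:varphi-coh} and Lemma~\ref{lem:cohom-silting} identify the $\bk[\cN]$-module $\Ext^\bullet_{\mbf{G}_1}(\bk,\tilt(w_\lambda\cdot_p 0))$ with the total cohomology of $\xi_\cN(\fE_{w_0\lambda})$, so in particular the two $\bk[\cN]$-modules share the same annihilator. Hardesty's result~\cite{hardesty} gives $\supp\fE_{w_0\lambda}=\ol{C}$, whence $\operatorname{Ann}(\fE_{w_0\lambda})\subseteq \cI_{\ol{C}}$ is automatic; everything reduces to the reverse containment $\cI_{\ol{C}}\cdot \fE_{w_0\lambda}=0$.

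The crucial type $A$-specific ingredient is that every nilpotent orbit in $\mathrm{GL}_n$ is Richardson: there exists $I\subset S$ with $\pi_I(\tcN_I)=\ol{C}$. Since $\tcN_I$ is smooth and hence reduced, the scheme-theoretic image of the proper map $\pi_I$ is the reduced closed subscheme $\ol{C}\subset\cN$, so $\pi_I$ factors as $\tcN_I\twoheadrightarrow \ol{C}\hookrightarrow\cN$. Consequently every object in the essential image of $\bar\pi_{I*}\colon D^G_\Coh(\tcN_I)\to D^G_\Coh(\cN)$ is annihilated by $\cI_{\ol{C}}$. It therefore suffices to exhibit $\fE_{w_0\lambda}$ as a direct summand of $\bar\pi_{I*}(\cG)$ for some $\cG\in D^G_\Coh(\tcN_I)$.

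To produce such a $\cG$, I would invoke Theorem~\ref{thm:pushforward-stein-commute}, which rewrites $\bar\pi_{I*}\cong \bar\pi_*\circ\Sigma^I$, and transport the problem across the equivalence $F_\varnothing$ of Theorem~\ref{thm:ar}. Using Lemma~\ref{lem:cohom-silting} again, the geometric question becomes a representation-theoretic one: produce an indecomposable tilting module $T\in\Rep_I(\mbf{G})$ such that $\tilt(w_\lambda\cdot_p 0)$ is a direct summand of the Steinberg translation $\bS^I(T)$. The natural candidate is to choose $T$ as an indecomposable summand of $\bS_I(\tilt(w_\lambda\cdot_p 0))$: by the biadjointness of $\bS^I$ and $\bS_I$ this furnishes canonical maps $\tilt(w_\lambda\cdot_p 0)\rightleftarrows \bS^I(T)$, and since $\End(\tilt(w_\lambda\cdot_p 0))$ is local, a unit composite splits the inclusion by a standard Krull--Schmidt argument.

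The main obstacle is therefore twofold: (a) that $\bS_I(\tilt(w_\lambda\cdot_p 0))\ne 0$ whenever $\lambda\in -\bX^+\cap\bX_C$ with $C$ Richardson for $I$, and (b) that the resulting adjunction composite is a unit in $\End(\tilt(w_\lambda\cdot_p 0))$. Claim (a) should follow by a support argument: a vanishing would, via Theorem~\ref{thm:ar} and the diagrams relating $\Sigma_I$ to $\bar\Xi_I=\bar\mu_{I*}\bar e_I^*$, force $\fE_{w_0\lambda}$ to have set-theoretic support strictly inside $\ol{C}=\pi_I(\tcN_I)$, contradicting Hardesty's theorem. Claim (b) is the genuinely new representation-theoretic input; I expect it to reduce either to an explicit ``bubble''-type scalar computation built from the canonical pairing $\St_I^{\mbf{G}}\otimes (\St_I^{\mbf{G}})^*\to\bk$ and the map $p_I$ of Lemma~\ref{lem:hom-nat-iso}, or via translation-principle and alcove combinatorics to the statement that $w_\lambda\cdot_p 0$ is suitably positioned relative to the $I$-wall when $C$ is Richardson for $I$.
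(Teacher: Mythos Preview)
Your reduction to showing that $\fE_{w_0\lambda}$ has scheme-theoretic support in $\ol{C}$ is the same as the paper's, but your strategy for the upper bound is genuinely different from the one in Theorem~\ref{thm:humphreys-richardson}, and the gap you label (b) is real and unresolved.

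The paper does \emph{not} try to exhibit each $\fE_{w_0\lambda}$ individually as a summand of a specific $\bar\pi_{I*}(\cG)$.  Instead it argues as follows.  First, Lemma~\ref{lem:pushforward-surjective} (whose proof uses both that every orbit is Richardson and that $\pi_I^{-1}(C_I)\to C_I$ is an isomorphism) shows that $\Db_{\ol{C}}\Coh^{G\times\Gm}(\cN)$ is generated by the pushforwards $\pi_{J*}\tfE_{J,\mu}\{n\}$ for all $J$ with $C_J\subset\ol{C}$.  By Corollary~\ref{cor:para-co-t} these pushforwards are silting, so their indecomposable summands are of the form $\fE_\nu\{k\}$.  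Let $\bX'$ be the set of $\nu$ that actually occur; then every $\fE_\nu$ with $\nu\in\bX'$ has scheme-theoretic support in $\ol{C}$.  The crucial step is a \emph{Grothendieck group} argument: the classes $[\fE_\nu]_{\nu\in\bX'}$ span the $K$-theory of $\Db_{\ol{C}}\Coh^{G\times\Gm}(\cN)$, the simple perverse-coherent sheaf $\fL_\lambda$ lies there for $\lambda\in\bX^+_{\ol{C}}$, and the change-of-basis matrix from $\{[\fE_\nu]\}$ to $\{[\fL_\nu]\}$ is unitriangular, forcing $\lambda\in\bX'$.  This bypasses your obstacles (a) and (b) entirely: one never needs to know \emph{which} pushforward contains a given $\fE_\lambda$, only that collectively they account for everything.

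Your proposed route through Steinberg translation does not close.  Given $T$ a summand of $\bS_I(M)$ with inclusion $i$ and projection $p$, the adjoints of $i$ and $p$ under the two different adjunctions $(\bS^I\dashv\bS_I)$ and $(\bS_I\dashv\bS^I)$ give maps $\bS^I(T)\to M$ and $M\to\bS^I(T)$, but the biadjoint triangle identities do \emph{not} relate these two compositions; there is no reason the resulting endomorphism of $M$ should be a unit.  What you really need is that $\tilt(w_\lambda\cdot_p 0)$ lies in the thick subcategory generated by the image of $\bS^I$, i.e.\ that translation through the $I$-wall and back contains $\tilt(w_\lambda\cdot_p 0)$ as a summand.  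This is a nontrivial cell-theoretic compatibility statement, and your sketch for (a) has a similar problem: the paper only proves $\bar\pi_*\Sigma^I\cong\bar\pi_*\bar\Xi^I$, not any geometric description of $\Sigma_I$ itself, so vanishing of $\bS_I(M)$ does not translate into a support statement in the way you suggest.
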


We emphasize that these conjectures are stronger than the conjectures on set-theoretic support that were proved in \cite{hardesty} and \cite{achar-hardesty-riche}.  Nevertheless, the set-theoretic ``lower bound'' proved in \cite[Theorem~9.3(1)]{achar-hardesty-riche} immediately implies a scheme-theoretic lower bound as well: namely, the annihilator in each of Conjectures~\ref{conj:classical-scheme-conj} and~\ref{conj:scheme-conj} is known to be contained in $\cI_{\ol{C}}$.

\begin{lem}\label{lem:relative-classical}
For $p> h$, Conjecture~\ref{conj:classical-scheme-conj}  implies Conjecture~\ref{conj:scheme-conj}.  
\end{lem}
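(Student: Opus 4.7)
The plan is to specialize Conjecture~\ref{conj:classical-scheme-conj} at $\mu := w_\lambda\cdot_p 0$, and then use a purely formal observation about unital $\bk[\cN]$-algebras to transfer the annihilator computation from $\Ext^*_{\mbf{G}_{1}}(\tilt(\mu),\tilt(\mu))$ to $\Ext^*_{\mbf{G}_{1}}(\bk,\tilt(\mu))$.

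First I would verify that $\mu=w_\lambda\cdot_p 0$ lies in the lower closure of $w_\lambda\cdot_p\cC_p$. Since $p>h$, for every simple coroot $\alpha_s^\vee$ one has $\la\rho,\alpha_s^\vee\ra=1>0$, and for the highest short coroot $\la\rho,\alpha_0^\vee\ra=h-1<p$; hence $0$ lies in the interior of the fundamental alcove $\cC_p$, and so $\mu$ lies in the interior of $w_\lambda\cdot_p\cC_p$ (and in particular in its lower closure). The hypothesis $\lambda\in\bX_C$ required by Conjecture~\ref{conj:classical-scheme-conj} is assumed. Applying the conjecture yields
\[
\operatorname{ann}_{\bk[\cN]}\bigl(\Ext^*_{\mbf{G}_{1}}(\tilt(\mu),\tilt(\mu))\bigr)=\cI_{\ol{C}}.
\]

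Next I would invoke the following general fact: for any nonzero finite-dimensional $\mbf{G}$-module $M$, the unital algebra map
\[
\varphi_M\colon \bk[\cN]=\Ext^*_{\mbf{G}_{1}}(\bk,\bk)\longrightarrow \Ext^*_{\mbf{G}_{1}}(M,M),\qquad f\mapsto f\otimes\id_M,
\]
factors the $\bk[\cN]$-action on $\Ext^*_{\mbf{G}_{1}}(\bk,M)$: for $f\in\bk[\cN]$ and $y\in\Ext^*_{\mbf{G}_{1}}(\bk,M)$, the cup product $f\cdot y$ coincides with the Yoneda composition $\varphi_M(f)\circ y$. Because $\Ext^*_{\mbf{G}_{1}}(M,M)$ is unital, its annihilator in $\bk[\cN]$ equals $\ker\varphi_M$, so every element of $\bk[\cN]$ that annihilates $\Ext^*_{\mbf{G}_{1}}(M,M)$ automatically annihilates $\Ext^*_{\mbf{G}_{1}}(\bk,M)$. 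Specializing to $M=\tilt(\mu)$ and combining with the previous paragraph yields $\cI_{\ol{C}}\subseteq\operatorname{ann}_{\bk[\cN]}\!\bigl(\Ext^*_{\mbf{G}_{1}}(\bk,\tilt(\mu))\bigr)$.

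The reverse containment $\operatorname{ann}_{\bk[\cN]}(\Ext^*_{\mbf{G}_{1}}(\bk,\tilt(\mu)))\subseteq\cI_{\ol{C}}$ is precisely the scheme-theoretic lower bound recorded in the paragraph preceding the lemma, derived from the set-theoretic inclusion $\ol{C}\subseteq\operatorname{supp}\bigl(\Ext^*_{\mbf{G}_{1}}(\bk,\tilt(\mu))\bigr)$ of \cite[Theorem~9.3(1)]{achar-hardesty-riche} together with the radicality of $\cI_{\ol{C}}$. Combining both containments gives the equality asserted by Conjecture~\ref{conj:scheme-conj}. I do not anticipate any real obstacle: the argument hinges only on making the correct choice of $\mu$, together with the standard Hopf-algebraic fact that cup product with a class in $H^*(\mbf{G}_{1},\bk)$ factors through the unit $\bk\to\End(M)$; the most delicate point is checking this last compatibility (equivalently, that the two natural $\bk[\cN]$-actions on $\Ext^*_{\mbf{G}_{1}}(\bk,M)$ agree), but this is routine.
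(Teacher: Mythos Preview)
Your proposal is correct and follows essentially the same approach as the paper: the paper's proof simply invokes the lower bound from \cite[Theorem~9.3(1)]{achar-hardesty-riche} and refers to \cite[Remark~9.4]{achar-hardesty-riche} for the remaining containment, and your argument---factoring the $\bk[\cN]$-action on $\Ext^*_{\mbf{G}_{1}}(\bk,\tilt(\mu))$ through the unital map $\varphi_M$ into $\Ext^*_{\mbf{G}_{1}}(\tilt(\mu),\tilt(\mu))$---is exactly the standard support-variety computation that such a reference unpacks. One cosmetic point: you should note that $\mu=w_\lambda\cdot_p 0\in\bX^+$ (required to invoke Conjecture~\ref{conj:classical-scheme-conj}), but this is already implicit in the formulation of Conjecture~\ref{conj:scheme-conj}, since $\tilt(w_\lambda\cdot_p 0)$ only makes sense for dominant $w_\lambda\cdot_p 0$.
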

\begin{proof}
In view of the ``lower bound'' discussed above, this claim follows by the same argument as in~\cite[Remark~9.4]{achar-hardesty-riche}. 
\end{proof}

\begin{rmk}\label{rmk:rel-classic-A}
Below, for $\mbf{G} = GL_N(\bk)$ with $p > N$, we will give an argument relating these conjectures in the opposite direction: see Corollary~\ref{cor:classical-hr}.
\end{rmk}


\subsection{The type \texorpdfstring{$A$}{A} case}
For the remainder of this section we will assume that $\mbf{G}=GL_N(\bk)$ with $p > N$. Our goal will be to verify 
Conjectures~\ref{conj:classical-scheme-conj} and \ref{conj:scheme-conj} in this setting.
For any $I \subset S$, let $C_I$ denote the Richardson orbit satisfying $\ol{C_I} = G\cdot \fn_I$, and
 recall the well-known fact 
that every nilpotent orbit for $GL_N(\bk)$ is of this form. 

For any $G$-stable closed subset
$Z \subset \cN$, we take 
\[
\Coh^{G \times \Gm}_Z(\cN) \subset \Coh^{G \times \Gm}(\cN),
\quad\text{resp.}\quad
\Db_Z\Coh^{G\times \Gm}(\cN) \subset \Db\Coh^{G\times \Gm}(\cN)
\]
to be the full abelian, resp.~triangulated, subcategory consisting of all objects set-theoretically supported on $Z$. 


\begin{lem}\label{lem:pushforward-surjective}
For any $G$-stable closed subset $Z \subset \cN$, the category $\Db_Z\Coh^{G\times \Gm}(\cN)$ is generated by objects of the form $\pi_{J*} \cF$ with $\cF \in \Db\Coh^{G\times \Gm}(\tcN_J)$,
for various $J\subset S$ such that $C_J \subset Z$. 
\end{lem}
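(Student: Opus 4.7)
The plan is to induct on the number of $G$-orbits contained in $Z$; the base case $Z=\varnothing$ is trivial. For the inductive step, I will choose a maximal $G$-orbit $C$ in $Z$ (with respect to the orbit-closure order) and set $Z' := Z\setminus C$. Then $Z'$ is closed, $G$-stable, and contains strictly fewer orbits than $Z$, so by induction $\Db_{Z'}\Coh^{G\times\Gm}(\cN)$ is already generated by the desired pushforwards. Since $\mbf{G}=GL_N(\bk)$, the orbit $C$ is Richardson, so $C = C_J$ for some $J\subset S$; moreover I will choose $J$ so that $\pi_J$ is birational onto $\ol{C_J}$, which is standard in type $A$; in particular $\pi_J$ then restricts to an isomorphism $\pi_J^{-1}(C_J)\simto C_J$.

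It therefore suffices to show that every object of the Verdier quotient $\Db_Z\Coh^{G\times\Gm}(\cN)/\Db_{Z'}\Coh^{G\times\Gm}(\cN)$ lies in the image of $\pi_{J*}\Db\Coh^{G\times\Gm}(\tcN_J)$. Setting $U := \cN\setminus Z'$, an open $G$-stable subvariety with $Z\cap U = C_J$, the restriction-to-$U$ functor should induce an equivalence of Verdier quotients
\[
\Db_Z\Coh^{G\times\Gm}(\cN)/\Db_{Z'}\Coh^{G\times\Gm}(\cN) \;\xrightarrow{\sim}\; \Db_{C_J}\Coh^{G\times\Gm}(U),
\]
by the standard open-closed recollement for coherent sheaves. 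By a d\'evissage using powers of the ideal of $C_J$ in $U$, the right-hand side is generated (as a triangulated category) by the objects $\imath_*\mathcal{H}$, where $\imath: C_J\hookrightarrow U$ is the closed immersion and $\mathcal{H}\in\Coh^{G\times\Gm}(C_J)$.

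To finish, I will use the fact that $\pi_J^{-1}(U) = \pi_J^{-1}(C_J) \cong C_J$, so that $\pi_J|_{\pi_J^{-1}(U)}$ is identified with the closed immersion $\imath$. Consequently $(\pi_{J*}\cG)|_U \cong \imath_*(\cG|_{\pi_J^{-1}(U)})$ for any $\cG\in\Db\Coh^{G\times\Gm}(\tcN_J)$. It then remains to observe that every $\mathcal{H}\in\Coh^{G\times\Gm}(C_J)$ is the restriction to $\pi_J^{-1}(U)$ of some $\cG\in\Coh^{G\times\Gm}(\tcN_J)$; this follows from the standard extension property of equivariant coherent sheaves on open subvarieties of Noetherian equivariant schemes. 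Combining these steps, the image of $\pi_{J*}\Db\Coh^{G\times\Gm}(\tcN_J)$ together with $\Db_{Z'}\Coh^{G\times\Gm}(\cN)$ generates all of $\Db_Z\Coh^{G\times\Gm}(\cN)$, closing the induction.

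The hard part will be justifying the displayed Verdier quotient equivalence in the $(G\times\Gm)$-equivariant coherent setting: one must check that restriction-to-$U$ is essentially surjective onto $\Db_{C_J}\Coh^{G\times\Gm}(U)$ (any equivariant coherent sheaf on $U$ supported in $C_J$ extends to one on $\cN$ supported in $Z$) and that the induced functor on the quotient is fully faithful. Both are well known in the non-equivariant, quasi-projective setting, and the equivariant version reduces to them via standard averaging arguments; granted this, the plan above produces the required generation statement.
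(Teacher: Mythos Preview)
Your proposal is correct and shares the same inductive skeleton as the paper's proof: induct on the number of orbits in $Z$, peel off an open orbit $C_J$, and use that in type $A$ the map $\pi_J$ restricts to an isomorphism over $C_J$. The difference lies in how the inductive step is carried out. You pass to the Verdier quotient $\Db_Z/\Db_{Z'}$, identify it with $\Db_{C_J}\Coh^{G\times\Gm}(U)$ via open restriction, and then argue that the generators $\imath_*\mathcal{H}$ of the latter lie in the image of $\pi_{J*}$ by extending $\mathcal{H}$ over $\tcN_J$. The paper instead works directly inside $\Db_Z$: given a coherent sheaf $\cG$ with reduced support in $Z$, it forms the underived pullback $\cF=\pi_I^\circ\cG$ and the adjunction-type map $\phi\colon\cG\to\pi_{I*}\cF$, then checks that $\phi$ becomes an isomorphism after restriction to $C_I$ (again using that $\pi_I^{-1}(C_I)\to C_I$ is an isomorphism), so the cone of $\phi$ is supported on $Z'$. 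Your route is cleaner conceptually but leans on the equivariant localization equivalence you flag as the ``hard part''; the paper's route is more elementary and self-contained, producing the required cone by hand without invoking that equivalence. Either way the geometric input is the same birationality over $C_J$, so the two arguments are really different packagings of the same idea.
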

\begin{proof}
Let $D'_Z$ be the full triangulated subcategory of $\Db_Z\Coh^{G \times \Gm}(\cN)$ generated by objects of the form $\pi_{J*}\cF$.  We wish to show that $D'_Z = \Db_Z\Coh^{G \times \Gm}(\cN)$.  We proceed by induction on the number of orbits in $Z$.

It is enough to show that any object $\cG \in \Coh_Z^{G \times \Gm}(\cN)$ lies in $D'_Z$.  Moreover, any such $\cG$ admits a finite filtration whose subquotients are supported on the \emph{reduced} subscheme corresponding to $Z$.  We may therefore assume that $\cG$ itself has reduced scheme-theoretic support.

Choose a subset $I$ such that the $G$-orbit $C_I$ is open in $Z$.  Assume by induction that the lemma is already known for $Z' := Z \smallsetminus C_I$.  We will exhibit an object $\cF \in \Db\Coh^{G \times \Gm}(\tcN_I)$ together with a morphism $\phi: \cG \to \pi_{I*}\cF$ whose cone is supported (set-theoretically) on $Z'$.  This will prove the lemma.

Let $\tilde{C_I} = \pi_I^{-1}(C_I)$, and consider the Cartesian diagram
\begin{equation}\label{eqn:nice-diagram}
\begin{tikzcd}
 \tilde{C_I} \ar[r, hook] \ar[d, "p"'] & \tcN_I \ar[d, "\pi_I"] \\
 C_I \ar[r, hook] & \cN 
\end{tikzcd}
\end{equation}
In this proof, let us write $\pi_{I\circ}$ and $\pi_I^\circ$ for the \emph{underived} push-forward and pullback functors along $\pi_I$.  Set $\cF = \pi_I^\circ \cG \in \Coh^{G \times \Gm}(\tcN_I)$, and then define $\phi: \cG \to \pi_{I*}\cF$ to be the composition of the maps
\[
\cG \to \pi_{I\circ}\pi_I^\circ \cG = \pi_{I\circ} \cF \to \pi_{I*}\cF,
\]
where the first map is the unit of the adjunction, and the last map is the truncation map.  All objects above have scheme-theoretic support contained in the reduced subscheme of $Z$.  To prove the claim, it is enough to show that the restriction of $\phi$ to $C_I$ is an isomorphism.  This restriction is given by
\[
\cG|_{C_I} \to p_\circ p^\circ(\cG|_{C_I}) \to p_* p^\circ(\cG|_{C_I}).
\]
Each of these maps is an isomorphism because the map $p$ in \eqref{eqn:nice-diagram} is an isomorphism: see, for instance,~\cite[Remark~8.8]{jantzen-nilp}).  
\end{proof}

\begin{thm}\label{thm:humphreys-richardson}
Conjecture \ref{conj:scheme-conj} holds for $\mbf{G}=GL_N(\bk)$ and $p > N$. 
\end{thm}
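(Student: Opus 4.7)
The plan is to bootstrap from the set-theoretic support result of \cite[Theorem~9.3]{achar-hardesty-riche} to the scheme-theoretic statement, by exhibiting the cohomology in question as a direct summand of an object which manifestly has the required scheme-theoretic support. Via Lemma~\ref{prop:varphi-coh} and Lemma~\ref{lem:cohom-silting}, I identify $\Ext^*_{\mbf{G}_{1}}(\bk,\tilt(w_\lambda\cdot_p 0))$ with the cohomology of $\xi_\cN(\fE_{w_0\lambda}) \cong \mbf{H}_\varnothing(\tilt(w_\lambda\cdot_p 0))$ in $D^G_\Coh(\cN)$. The known set-theoretic support (with reduced target $\ol{C_I}$) gives the inclusion $\mathrm{Ann} \subseteq \cI_{\ol{C_I}}$, so what remains is the reverse inclusion $\cI_{\ol{C_I}}\subseteq\mathrm{Ann}$.

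The heart of the argument is to show that $\xi_\cN(\fE_{w_0\lambda})$ is a direct summand of $\bar\pi_{I*}(\cH)$ for some $\cH\in D^G_\Coh(\tcN_I)$. This would suffice: since $\tcN_I$ is reduced and $\pi_I(\tcN_I)\subseteq\ol{C_I}$ set-theoretically, the pullback $\pi_I^*(f)$ of any $f\in\cI_{\ol{C_I}}$ vanishes on $\cO_{\tcN_I}$ and hence acts as zero on any complex in the essential image of $\bar\pi_{I*}$, a property inherited by direct summands.

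To produce such an $\cH$, I translate the problem across the equivalences $F_\varnothing, F_I$ of Theorem~\ref{thm:ar}. Under these, the identity $\bar\pi_{I*}\cong\ol{\pi}_*\circ\Sigma^I$ from Theorem~\ref{thm:pushforward-stein-commute} reduces the problem to the representation-theoretic question: find a tilting module $T_I\in\Rep_I(\mbf{G})$ such that $\tilt(w_\lambda\cdot_p 0)$ is a direct summand of $\bS^I(T_I)$. The natural candidate is $T_I:=\bS_I(\tilt(w_\lambda\cdot_p 0))$, reducing the claim to showing that the unit map $\tilt(w_\lambda\cdot_p 0)\to\bS^I\bS_I(\tilt(w_\lambda\cdot_p 0))$ admits a splitting.

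The main obstacle is establishing this last splitting. Since $\tilt(w_\lambda\cdot_p 0)$ is tautologically a direct summand of $\tilt(w_\lambda\cdot_p 0)\otimes\St_I^{\mbf{G}}\otimes(\St_I^{\mbf{G}})^*$ via the coevaluation $\bk\hookrightarrow\St_I^{\mbf{G}}\otimes(\St_I^{\mbf{G}})^*$, the question reduces to whether this summand survives the intermediate block projection $\pr_I$ appearing in $\bS^I\bS_I$. I expect this to follow from the antispherical cell hypothesis $\lambda\in\bX_{C_I}$, which encodes the combinatorial compatibility between the weight $w_\lambda\cdot_p 0$ and the singular facet $\Waff\cdot_p(-\varsigma_I)$ needed for translation functors to behave favorably on $\tilt(w_\lambda\cdot_p 0)$; in particular, it should imply both $\bS_I(\tilt(w_\lambda\cdot_p 0))\neq 0$ and, via a Krull--Schmidt argument on the tilting decomposition of $\bS^I\bS_I(\tilt(w_\lambda\cdot_p 0))$, the required splitting.
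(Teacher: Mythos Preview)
Your overall strategy---realize $\xi_\cN(\fE_{w_0\lambda})$ as a direct summand of some $\bar\pi_{I*}(\cH)$---is exactly the right shape, and matches the paper's. But the argument has a genuine gap at the final step: you do not prove that $\tilt(w_\lambda\cdot_p 0)$ is a summand of $\bS^I\bS_I(\tilt(w_\lambda\cdot_p 0))$. This is not a technicality. The assertion ``$\lambda$ lies in the cell for $C_I$, therefore the tilting module survives translation through the $\varsigma_I$-wall'' is, in positive characteristic, essentially of the same order of difficulty as the support statement you are trying to prove; it encodes exactly the compatibility between cell combinatorics and tilting tensor ideals that the Humphreys conjecture is about. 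Your sketch (``coevaluation gives a tautological summand, and one must check it survives the intermediate $\pr_I$'') correctly isolates the obstruction but gives no mechanism to overcome it, and there is no soft argument here: for weights outside the cell the summand genuinely does \emph{not} survive, so any proof must use the cell hypothesis in a substantive way.

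The paper avoids this problem entirely by replacing the direct summand question with a $K$-theoretic one. First, Lemma~\ref{lem:pushforward-surjective} (which uses that every orbit in $GL_N$ is Richardson) shows that $\Db_{\ol{C}}\Coh^{G\times\Gm}(\cN)$ is generated by the images of \emph{all} the parabolic pushforwards $\pi_{J*}$ with $C_J\subset\ol{C}$, not just $\pi_{I*}$. By Corollary~\ref{cor:para-co-t} these pushforwards are silting, so their indecomposable summands are various $\fE_\mu$'s; let $\bX'$ be the set of $\mu$ that occur. The classes $[\fE_\mu]_{\mu\in\bX'}$ then span the Grothendieck group of $\Db_{\ol{C}}\Coh^{G\times\Gm}(\cN)$. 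Now the cell hypothesis enters in a different guise: by the very definition of $\bX^+_{\ol{C}}$ (via supports of the simple perverse-coherent sheaves $\fL_\lambda$), each $\fL_\lambda$ with $\lambda\in\bX^+_{\ol{C}}$ lies in this subcategory, so $[\fL_\lambda]$ is a $\Z[t,t^{-1}]$-combination of the $[\fE_\mu]_{\mu\in\bX'}$. Since the transition matrix between the $\fL$'s and the $\fE$'s is unitriangular, $[\fE_\lambda]$ appears with nonzero coefficient, forcing $\lambda\in\bX'$. This gives the needed summand relation without ever analyzing $\bS^I\bS_I$ on a specific tilting module.
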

\begin{proof}
By \cite[Proposition~9.1]{achar-hardesty-riche} (combined with Lemma~\ref{lem:silting-parity}), this problem is equivalent to showing that for any orbit $C$ and any $\lambda \in \bX^+_C$, the scheme-theoretic support of the object $\fE_\lambda = \pi_*\tfE_{w_0\lambda}$ is equal to $\ol{C}$.  We already know from~\cite[Theorem~9.3(1)]{achar-hardesty-riche} that the set-theoretic support at least contains $\ol{C}$, so it is enough to show that the scheme-theoretic support is contained in $\ol{C}$.  In fact, we will prove a slightly stronger statement, replacing $\bX^+_C$ by $\bX^+_{\ol{C}}$: we will show that
\begin{equation}\label{eqn:conj-rephrase}
\text{if $\lambda \in \bX^+_{\ol{C}}$, then the scheme-theoretic support of $\fE_\lambda$ is contained in $\ol{C}$.}
\end{equation}

Suppose $C = C_I$ for some $I \subset S$.
 By Lemma~\ref{lem:pushforward-surjective}, $\Db_{\ol{C_I}}\Coh^{G\times \Gm}(\cN)$ is generated by the objects
\[
\pi_{J*}\tfE_{J, \mu}\{n\},
\]
for $n \in \Z$,  $\mu \in \bXpp_J$ where $J$ satisfies $C_J \subset \ol{C_I}$. 
These are silting objects by Corollary~\ref{cor:para-co-t}. Let us set
\[
\bX' := \left\{\lambda \in \bX^+ \mid \text{$\fE_\lambda$ is a summand of some $\pi_{J*}\tfE_{J,\mu}\{n\}$}\right\}.
\]
Since $\pi_J$ factors through the inclusion map $\ol{C_I} \hookrightarrow \cN$, all objects in the collection $\{ \fE_\lambda\}_{\lambda \in \bX'}$ have scheme-theoretic support contained in $\ol{C_I}$.


It follows from \cite[Theorem~9.3(1)]{achar-hardesty-riche} that $\bX' \subseteq \bX^+_{\ol{C}}$. To prove~\eqref{eqn:conj-rephrase}, we must show that
$\bX' = \bX^+_{\ol{C}}$. To accomplish this, we will proceed with a $K$-theoretic argument. The set
\begin{equation}\label{eqn:groth-basis}
\{[\fE_\lambda]\}_{\lambda \in \bXp}
\end{equation}
is a $\Z[t,t^{-1}]$-basis for the Grothendieck group $K\left(\Db\Coh^{G\times \Gm}(\cN)\right)$ (where the action of $t$ is induced by twisting with $\{1\}$).  Moreover, by Lemma~\ref{lem:pushforward-surjective}, for any object $\cF \in \Db_{\ol{C_I}}\Coh^{G \times \Gm}(\cN)$, the class $[\cF]$ lies in the span of the subset
\begin{equation}\label{eqn:groth-c}
\{[\fE_\lambda]\}_{\lambda \in \bX'}.
\end{equation}

Two other bases for $K\left(\Db\Coh^{G\times \Gm}(\cN)\right)$ are
\begin{equation}\label{eqn:groth-basis-2}
\{[\fL_\lambda]\}_{\lambda \in \bXp}
\qquad\text{and}\qquad
\{[\onabla_\lambda]\}_{\lambda \in \bXp}.
\end{equation}
Because the $\fL_\lambda$'s are obtained from the the pre-exceptional set $\{\onabla_\lambda\}_{\lambda \in \bX^+}$ by a recollement construction, the transition matrix between these bases is ``upper-triangular'': we have
\[
[\fL_\lambda] = [\onabla_\lambda] + \sum_{\substack{\mu \in \bX^+ \\ \mu < \lambda}} a_{\lambda,\mu}(t) [\onabla_\mu].
\]
By the same reasoning, the transition matrix between~\eqref{eqn:groth-basis} and the second basis in~\eqref{eqn:groth-basis-2} is also upper-triangular.  It follows that the transition matrix between~\eqref{eqn:groth-basis} and the first basis in~\eqref{eqn:groth-basis-2} is upper-triangular: we have
\begin{equation}\label{eqn:groth-tri}
[\fL_\lambda] = [\fE_\lambda] + \sum_{\substack{\mu \in \bX^+ \\ \mu < \lambda}} b_{\lambda,\mu}(t) [\fE_\mu].
\end{equation}

Now suppose that $\lambda \in \bX^+_{\ol{C_I}}$. Then the object $\fL_\lambda$ is supported on $\ol{C_I}$, so it lies in the span of~\eqref{eqn:groth-c}.  Since $[\fE_\lambda]$ occurs with nonzero coefficient in~\eqref{eqn:groth-tri}, we conclude that $\lambda \in \bX'$, and hence that $\bX' = \bX^+_{\ol{C}}$.
\end{proof}

\begin{cor}\label{cor:classical-hr}
Conjecture \ref{conj:classical-scheme-conj} holds for $\mbf{G}=GL_N(\bk)$ and $p > N$. 
\end{cor}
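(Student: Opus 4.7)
The plan is to reduce Conjecture~\ref{conj:classical-scheme-conj} to the relative version (Theorem~\ref{thm:humphreys-richardson}) by decomposing the tilting module $\tilt(\mu)\otimes\tilt(\mu)^{*}$ into indecomposable summands and treating each one separately.

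First, the adjunction together with the vanishing of $\Ext^{\bullet}_{\mbf{G}_{1}}(\bk,-)$ on any direct summand lying outside the extended principal block yields
\[
\Ext^{\bullet}_{\mbf{G}_{1}}(\tilt(\mu),\tilt(\mu)) \;\cong\; \Ext^{\bullet}_{\mbf{G}_{1}}\!\bigl(\bk,\ \pr_{\varnothing}(\tilt(\mu)\otimes\tilt(\mu)^{*})\bigr).
\]
Since tensor products of tilting modules are tilting, $\pr_{\varnothing}(\tilt(\mu)\otimes\tilt(\mu)^{*})$ is a tilting module in $\Rep_{\varnothing}(\mbf{G})$; decompose it as $\bigoplus_{\rho\in -\bX^{+}} \tilt(w_{\rho}\cdot_{p}0)^{\oplus m_{\rho}}$ with $m_{\rho}\in\Z_{\geq 0}$. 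Applying Theorem~\ref{thm:humphreys-richardson} to each indecomposable summand, the annihilator of $\Ext^{\bullet}_{\mbf{G}_{1}}(\bk,\tilt(w_{\rho}\cdot_{p}0))$ is $\mathcal{I}_{\ol{C_{\rho}}}$, where $C_{\rho}$ is the nilpotent orbit characterized by $\rho\in \bX_{C_{\rho}}$.

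Intersecting these annihilators over the summands, the annihilator of $\Ext^{\bullet}_{\mbf{G}_{1}}(\tilt(\mu),\tilt(\mu))$ equals $\mathcal{I}_{Z}$, where $Z = \bigcup_{m_{\rho}>0}\ol{C_{\rho}}$; it then remains to show that $Z = \ol{C}$. The inclusion $Z \supseteq \ol{C}$ (equivalently, $\mathcal{I}_{Z}\subseteq \mathcal{I}_{\ol{C}}$) is exactly the scheme-theoretic lower bound recalled in the paragraph preceding Lemma~\ref{lem:relative-classical}.

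The main obstacle — and where the hypothesis $\mbf{G}=GL_{N}(\bk)$ becomes essential — is the reverse inclusion $Z \subseteq \ol{C}$: every indecomposable tilting summand $\tilt(w_{\rho}\cdot_{p}0)$ appearing in $\pr_{\varnothing}(\tilt(\mu)\otimes\tilt(\mu)^{*})$ must satisfy $\ol{C_{\rho}}\subseteq \ol{C}$. This is a ``cells descend under tensor products of tilting modules'' statement, which in type $A$ can be established via the Lusztig--Xi bijection between antispherical cells and nilpotent orbits, the Robinson--Schensted correspondence, and the fact that every nilpotent orbit in $GL_{N}(\bk)$ is Richardson. Alternatively, and more in the spirit of the present paper, one could try a purely geometric argument using Theorem~\ref{thm:pushforward-stein-commute} together with (the dg analogue of) Lemma~\ref{lem:pushforward-surjective}, aiming to show that $\bar\pi_{*}F_{\varnothing}^{-1}(\pr_{\varnothing}(\tilt(\mu)\otimes\tilt(\mu)^{*}))$ lies in the triangulated subcategory of $D^{G}_{\Coh}(\cN)$ generated by $\bar\pi_{J*}\bigl(D^{G}_{\Coh}(\tcN_{J})\bigr)$ for $J\subset S$ with $C_{J}\subseteq \ol{C}$, thereby bypassing cell theory entirely.
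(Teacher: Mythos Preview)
Your decomposition of $\tilt(\mu)^{*}\otimes\tilt(\mu)$ into indecomposable tilting summands and application of Theorem~\ref{thm:humphreys-richardson} to each summand is exactly what the paper does. Where you diverge is in how to finish: you correctly identify that the annihilator is $\mathcal{I}_{Z}$ with $Z=\bigcup_{m_{\rho}>0}\ol{C_{\rho}}$, and then you try to prove $Z=\ol{C}$ by establishing $Z\subseteq\ol{C}$ via a ``cells descend under tensor products'' statement that you do not actually prove.

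This is the gap, and it is self-inflicted. The set-theoretic support of $\Ext^{\bullet}_{\mbf{G}_{1}}(\tilt(\mu),\tilt(\mu))$ is precisely $Z$, and the main result of~\cite{hardesty} already says this set-theoretic support equals $\ol{C}$. So $Z=\ol{C}$ is known; you do not need any cell-theoretic or geometric argument for the inclusion $Z\subseteq\ol{C}$. The paper's proof invokes~\cite{hardesty} for exactly this purpose: once the set-theoretic support is known to be $\ol{C}$, the only thing left is that the annihilator is a radical ideal, and that is immediate because it is an intersection of the ideals $\mathcal{I}_{\ol{C_{\rho}}}$, each of which is radical by Theorem~\ref{thm:humphreys-richardson}.

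In short, your outline is the paper's outline, but you overlooked that the ``main obstacle'' you describe was already handled in~\cite{hardesty}, and the suggested workarounds (Robinson--Schensted, or a geometric argument via Theorem~\ref{thm:pushforward-stein-commute}) are unnecessary detours.
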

\begin{proof}
Suppose $\mu \in \bXp$ belongs to the lower closure of $w_\lambda \cdot_p \cC_p$ for some $\lambda \in \bX_C$.  The main result of~\cite{hardesty} already tells us that the $\bk[\cN]$-module $\Ext_{\mbf{G}_{1}}^*(\tilt(\mu), \tilt(\mu))$ has set-theoretic support equal to $\ol{C}$, so we need only prove that its scheme-theoretic support is reduced.  Write $\tilt(\mu)^* \otimes \tilt(\mu)$ as a sum of indecomposable tilting modules, say
\[
\tilt(\mu)^* \otimes \tilt(\mu) = \tilt(\nu_1) \oplus \cdots \oplus \tilt(\nu_k).
\]
We have
\[
\Ext_{\mbf{G}_{1}}^*(\tilt(\mu), \tilt(\mu))
\cong \Ext_{\mbf{G}_{1}}^*(\bk, \tilt(\mu)^* \otimes \tilt(\mu))
\cong \bigoplus_{i = 1}^k \Ext_{\mbf{G}_{1}}^*(\bk, \tilt(\nu_i)).
\]
By Theorem~\ref{thm:humphreys-richardson}, every nonzero term in the last direct sum above has reduced scheme-theoretic support.
\end{proof}

\appendix

\section{Equivariant dg modules}
\label{app:dg}

Let $H$ be an algebraic group over $\bk$, and let $X = \Spec(\mbf{A})$ be an affine $H \times \Gm$-variety over $\bk$.  In other words, $\mbf{A}$ is a graded commutative finitely generated reduced $\bk$-algebra equipped with a rational $H$-action.  We assume throughout that the grading on the ring $\mbf{A}$ is concentrated in nonnegative degrees.

Let $\mbf{A}\lmod_{H \times \Gm}$ be the abelian category of $H \times \Gm$-equivariant $\mbf{A}$-modules, or, equivalently, of graded $H$-equivariant $\mbf{A}$-modules.  Each object $M \in \mbf{A}\lmod_{H \times \Gm}$ comes equipped with an ``internal grading'' $M = \bigoplus_{j \in \Z} M_j$, where each $M_j$ is a rational $H$-module.  For $M \in \mbf{A}\lmod_{H \times \Gm}$, we set
\[
M \la 1 \ra = M \otimes \bk_{-1},
\]
so that the internal grading of $M\la 1\ra$ is given by $(M\la 1\ra)_j = M_{j+1}$.

Given a (possibly infinite) collection $(M^i)_{i \in I}$ of rational $H$-modules, the product vector space $\prod_{i \in I} M^i$ carries a (not necessarily rational) action of the abstract group $H(\bk)$.  A vector $v \in \prod_{i \in I} M^i$ is said to be \emph{rational} if it is contained in a finite-dimensional $H(\bk)$-stable subspace on which $H$ acts algebraically.  The subspace consisting of rational vectors is denoted by
\[
\prodrat_{i \in I} M^i \subset \prod_{i \in I} M^i.
\]
This is a rational $H$-module.  The same notion makes sense for $H \times \Gm$-modules.  If the $M^i$ are objects of $\mbf{A}\lmod_{H \times \Gm}$, then because the $H \times \Gm$-action on $\mbf{A}$ is rational, it is easy to see that $\smallprodrat_{i \in I} M^i$ is an $\mbf{A}$-submodule of $\prod_{i \in I} M^i$.  Thus, in this context, $\smallprodrat_{i \in I} M^i$ is again an object of $\mbf{A}\lmod_{H \times \Gm}$.

Let $\mbf{A}\lmod_{H \times \Gm}^{\fgen}$ be the subcategory consisting of those modules that are finitely generated over $\mbf{A}$. Of course, we may identify
\[
\mbf{A}\lmod_{H \times \Gm} = \QCoh^{H \times \Gm}(X)
\qquad\text{and}\qquad
\mbf{A}\lmod_{H \times \Gm}^{\fgen} = \Coh^{H \times \Gm}(X).
\]

Next, let $C^{++}\mbf{A}\lmod_{H \times \Gm}$ be the category of chain complexes $M = ( \cdots \to M^{-1} \to M^0 \to M^1 \to \cdots)$ that are ``cohomologically doubly bounded below'': that is, the cohomology modules $H^i(M) \in \mbf{A}\lmod_{H \times \Gm}$ vanish for $i \ll 0$, and there is an integer $N$ such that $H^i(M)_j = 0$ for all $j < N$ and all $i \in \Z$.  We do not impose any boundedness conditions on the underlying terms $M^i$.  However, every chain complex in $C^{++}\mbf{A}\lmod_{H \times \Gm}$ is quasi-isomorphic to one whose terms do satisfy such boundedness conditions.  Denote the derived category of $C^{++}\mbf{A}\lmod_{H \times \Gm}$ by
\[
D^{++}\QCoh^{H \times \Gm}(X)
\]

It is well known that the category $\QCoh^{H \times \Gm}(X) = \mbf{A}\lmod_{H \times \Gm}$ has enough injectives. (This follows from the fact that $\QCoh(X)$ has enough injectives, and that the forgetful functor $\QCoh^{H \times \Gm}(X) \to \QCoh(X)$ has an exact right adjoint, namely, the ``averaging'' functor.)  Therefore, any complex in $C^{++}\mbf{A}\lmod_{H \times \Gm}$ is quasi-isomorphic to a bounded-below chain complex of injectives.  (However, the internal grading of injective modules is usually \emph{not} bounded below.)

Next, regard the graded ring $\mbf{A}$ as a dg ring with zero differential. Let $\mbf{A}\ldgmod_H$ denote the category of $H$-equivariant dg modules.  Let $\mbf{A}\ldgmod_H^+ \subset \mbf{A}\ldgmod_H$ be the subcategory consisting of dg modules whose cohomology $H^\bullet(M)$ is bounded below, and then let $\mbf{A}\ldgmod_H^{+,\fgen} \subset \mbf{A}\ldgmod_H^+$ be the subcategory consisting of modules $M$ for which $H^\bullet(M)$ is finitely generated over $\mbf{A}$.  We denote by
\[
D^{+,H}_\QCoh(X)
\qquad\text{and}\qquad
D^{+,\fgen,H}_\Coh(X)
\]
the derived categories of $\mbf{A}\ldgmod_H^+$ and $\mbf{A}\ldgmod_H^{+,\fgen}$, respectively.

We define
\[
\xi: C^{++}\mbf{A}\lmod_{H \times \Gm} \to \mbf{A}\ldgmod_H^+
\]
to be the functor that sends a chain complex $M = ( \cdots \to M^{-1} \to M^0 \to M^1 \to \cdots)$ in $C^{++}\mbf{A}\lmod_{H \times \Gm}$ to the dg module given by
\[
\xi(M)^n = \prodrat_{i+j = n} M^i_j.
\]
It is easy to see that
\begin{equation}\label{eqn:cohom-degrade}
H^n(\xi(M)) \cong \prodrat_{i+j=n} H^i(M)_j \cong \bigoplus_{i+j=n} H^i(M)_j.
\end{equation}
(The ``doubly bounded below'' condition ensures that this direct sum is finite.)  In particular, $\xi$ sends acyclic complexes to acyclic complexes, so it induces functors
\begin{align*}
\xi &: D^{++}\QCoh^{H \times \Gm}(X) \to D^{+,H}_\QCoh(X), \\
\xi &: \Db\Coh^{H \times \Gm}(X) \to D^{+,\fgen,H}_\Coh(X).
\end{align*}
Lastly, we define
\[
D^H_\Coh(X) = 
\begin{array}{c}
\text{the full triangulated subcategory of $D^{+,\fgen,H}_\Coh(X)$ generated by}\\
\text{the essential image of $\xi: \Db\Coh^{H \times \Gm}(X) \to D^{+,\fgen,H}_\Coh(X)$.}
\end{array}
\]
It seems likely that one has $D^H_\Coh(X) = D^{+,\fgen,H}_\Coh(X)$ in many cases of interest, but we will not try to prove this claim here.

\begin{lem}
\begin{enumerate}
\item If $I$ is a bounded-below chain complex of injective objects in $C^{++}\mbf{A}\lmod_{H \times \Gm}$, then $\xi(I)$ is a $K$-injective object in $\mbf{A}\ldgmod_H^+$.\label{it:kinj-xi}
\item Every object $M \in \mbf{A}\ldgmod_H^+$ admits a quasi-isomorphism $M \to I$ where $I \in \mbf{A}\ldgmod_H^+$ is $K$-injective.\label{it:kinj-exist}
\end{enumerate}
\end{lem}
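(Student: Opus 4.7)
The plan for part~(1) mirrors the classical argument that a bounded-below complex of injectives is $K$-injective. Let $A$ be an acyclic dg $\mathbf{A}$-module and $f\colon A \to \xi(I)$ a chain map; I would construct a null-homotopy $h\colon A \to \xi(I)[-1]$ by induction on the chain degree $i$, starting from the lowest nonvanishing $I^{N_0}$ (this anchor exists because $I$ is bounded below in chain degree). At each inductive step, the task is to extend a partially defined homotopy across the next term $I^i$; acyclicity of $A$ forces the relevant obstruction class to vanish, and injectivity of $I^i$ in $\mathbf{A}\lmod_{H \times \Gm}$ produces the desired extension. Because each $I^i_j$ is a rational $H$-module and every extension is produced inside $\mathbf{A}\lmod_{H \times \Gm}$, the resulting homotopy components automatically take values in the rational sub-module $\smallprodrat_{i+j = n} I^i_j \subset \prod_{i+j=n} I^i_j$, so that $h$ is indeed a map of dg modules.

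For part~(2), I would apply a Spaltenstein-type construction in the dg setting. The key input is that $\mathbf{A}\lmod_{H \times \Gm}$ has enough injectives (via the exact ``averaging'' right adjoint to the forgetful functor, as noted in the setup). Given $M$, for each $n$ I would form the canonical truncation $\tau^{\geq -n} M$ and build an injective resolution of it by an inductive Cartan--Eilenberg procedure: embed the bottom term into an injective, pass to the cokernel, iterate, and assemble the result into a bicomplex of injectives, which after totalization produces an object of the form $\xi(J^{(n)})$ with $J^{(n)}$ a bounded-below complex of injectives in $C^{++}\mathbf{A}\lmod_{H \times \Gm}$. Each $\xi(J^{(n)})$ is $K$-injective by part~(1), and the truncations fit into a compatible inverse system. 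One then produces the desired $K$-injective resolution $M \to I$ as a suitable homotopy inverse limit of this system; the ``doubly bounded-below cohomology'' hypothesis guarantees that the inverse system stabilizes in each fixed cohomological degree, so no convergence or phantom-map issues arise.

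The main obstacle I anticipate is in part~(2): although $\xi$ is not essentially surjective up to quasi-isomorphism (a general dg $\mathbf{A}$-module has no built-in ``double grading''), one must nevertheless arrange the resolutions so that each truncated piece lies in the essential image of $\xi$ applied to a bounded-below complex of injectives in $C^{++}\mathbf{A}\lmod_{H \times \Gm}$, so that part~(1) applies. This requires a careful Cartan--Eilenberg construction that tracks the $H \times \Gm$-equivariance and the internal grading (so that injectives chosen at each stage have bounded-below internal grading when restricted to each cohomological degree of the totalization). Once this bookkeeping is in place, the passage to the inverse limit is routine, because injectivity and rationality are preserved under the relevant limits.
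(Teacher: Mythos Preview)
Your approach to part~(1) is essentially the same as the paper's: both build the homotopy $s = (s^j)_j$ by induction on the chain index $j$ of $I^j$, using injectivity of $I^j$ at each step to extend a map defined on $\im d_A \subset A[1]$ to all of $A[1]$. One minor imprecision: the reason the assembled homotopy lands in the rational product $\smallprodrat$ is not that each target $I^i_j$ is rational, but that the \emph{source} $A$ is a rational $H$-module and each $s^j$ is $H$-equivariant, so the image of any finite-dimensional $H$-stable subspace of $A$ under $\prod_j s^j$ is again finite-dimensional with algebraic $H$-action. The paper makes exactly this point.

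For part~(2), your Spaltenstein/Cartan--Eilenberg strategy has a genuine gap. A Cartan--Eilenberg resolution applies to a chain complex in an abelian category with enough injectives; but a general dg $\mbf{A}$-module $M$ is \emph{not} a chain complex in $\mbf{A}\lmod_{H \times \Gm}$: the individual graded pieces $M^n$ are only $\mbf{A}^0$-modules, not $\mbf{A}$-modules, so there is no way to ``embed the bottom term into an injective of $\mbf{A}\lmod_{H \times \Gm}$'' and iterate so as to produce something of the form $\xi(J^{(n)})$. You flag this obstacle yourself but do not resolve it, and the inverse-limit step is then moot. The paper sidesteps the entire issue with a one-line trick that exploits the hypothesis that $\mbf{A}$ has zero differential: the differential $d_M$ is then $\mbf{A}$-linear, so there is a short exact sequence $0 \to \ker d_M \to M \to \im d_M \to 0$ of dg modules in which the outer terms have zero differential and hence lie in the image of $\xi$ (as chain complexes concentrated in degree~$0$). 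Part~(1) gives $K$-injective resolutions of $\ker d_M$ and $\im d_M$, and a standard cone argument along the resulting distinguished triangle produces one for $M$. No truncations, inverse limits, or bicomplexes are needed.
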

\begin{proof}
\eqref{it:kinj-xi}~Let $I = ( \cdots \to I^{-1} \to I^0 \to I^1 \to \cdots)$ be a bounded-below chain complex of injectives in $C^{++}\mbf{A}\lmod_{H \times \Gm}$.  Assume without loss of generality that $I^i = 0$ for $i < 0$.  Let $M \in \mbf{A}\ldgmod_H$ be an acyclic dg modules, and let $f: M \to \xi(I)$ be a morphism of $H$-equivariant dg $\mbf{A}$-modules.  We must show that $f$ is null-homotopic.

For each $i$, $\xi(I^i)$ is a dg module with zero differential.  The underlying graded $\mbf{A}$-module of $\xi(I)$ (ignoring the differential) is given by
\[
\xi(I) = \prodrat_{i \in \Z} \xi(I^i)[-i],
\]
and its differential $d_{\xi(I)}$ is a product of maps of graded $\mbf{A}$-modules
\[
d_{\xi(I)}^j: \xi(I^j)[-j] \to \xi(I^{j+1})[-j].
\]
Let $d_M: M \to M[1]$ be the differential of $M$.  The map $f: M \to \xi(I)$ is a product of maps
\[
f^j: M \to \xi(I^j)[-j] 
\qquad\text{such that}\qquad
d_{\xi(I)}^j f^j = f^{j+1} d_M.
\]
Below, we will define a collection of maps
\begin{equation}\label{eqn:K-inj}
s^j: M \to \xi(I^j)[-j-1]
\qquad\text{such that}\qquad
d_{\xi(I)}^{j-1} s^{j-1} + s^{j} d_M = f^{j}
\end{equation}
for all $j \in \Z$.  Let $s = \prod_{i \in \Z} s^i: M \to \prod_{i \in \Z} \xi(I^i)[-i-1]$.  Since $M$ is a rational $H \times \Gm$-module, the image of $s$ consists of rational vectors.  That is, we have a map $s: M \to \xi(I)[-1]$ such that $d_{\xi(I)} s + s d_M = f$, as desired.

We define the $s^j$'s by induction on $j$.  For $j < 0$, we must have $s^j = 0$, and the equation in~\eqref{eqn:K-inj} holds trivially.  Suppose now that $s^j$ is defined for all $j < N$ in such a way that~\eqref{eqn:K-inj} holds.  Note that
\[
(f^N - d_{\xi(I)}^{N-1} s^{N-1}) \circ d_M = d_{\xi(I)}^{N-1} f^{N-1} - d_{\xi(I)}^{N-1}(f^{N-1}  - d_{\xi(I)}^{N-2} s^{N-2}) = 0.
\]
It follows that $f^N - d_{\xi(I)}^{N-1} s^{N-1}$ induces a map $g: M/\im d_M \to \xi(I^N)[-N]$.  Since $M$ is acyclic, we have $\im d_M = \ker d_M$, and $M/\im d_M \cong M/\ker d_M \cong \im d_M$.  Regard $\im d_M$ as a submodule of $M[1]$.  Using the fact that $\xi(I^N)$ is an injective module, we can extend $g: \im d_M \to \xi(I^N)[-N]$ to a map $s^N: M[1] \to \xi(I^N)[-N]$.  This map satisfies the equation in~\eqref{eqn:K-inj} by construction.

\eqref{it:kinj-exist}~If $M$ lies in the image of $\xi$, say, $M = \xi(\tilde M)$, then the claim follows from part~\eqref{it:kinj-xi}, since $\tilde M$ admits an injective resolution in $C^{++}\mbf{A}\lmod_{H \times \Gm}$.  For general $M$ with differential $d_M: M \to M[1]$, consider the short exact sequence of $\mbf{A}$-modules $0 \to \ker d_M \to M \to \im d_M \to 0$.  (Here, we are exploiting the fact that $\mbf{A}$ has zero differential.)  This gives rise to a distinguished triangle in $D^{+,H}_\QCoh(X)$.  Now, $\ker d_M$ and $\im d_M$ are both in the image of $\xi$ (since they have zero differential), so they admit $K$-injective resolutions, and hence so does $M$.
\end{proof}

\begin{prop}\label{prop:degrade}
The functor $\xi : \Db\Coh^{H \times \Gm}(X) \to D^H_\Coh(X)$ is a degrading functor with respect to $\la -1\ra[1]$.  In other words:
\begin{enumerate}
\item There is a natural isomorphism $\xi(M) \cong \xi(M\la -1\ra[1])$ such that the map\label{it:graded-ff}
\[
\bigoplus_{n \in \Z} \Hom(M, N\la -n\ra[n]) \simto \Hom(\xi(M), \xi(N))
\]
is an isomorphism for all $M, N \in \Db\Coh^{H \times \Gm}(X)$.
\item The image of $\xi$ generates $D^H_\Coh(X)$ as a triangulated category.\label{it:graded-surj}
\end{enumerate}
\end{prop}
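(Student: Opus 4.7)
Part~(2) is immediate from the definition of $D^H_\Coh(X)$: by construction this is the triangulated subcategory of $D^{+,\fgen,H}_\Coh(X)$ generated by the essential image of~$\xi$. (I read the target in the statement as $D^H_\Coh(X)$, since $D^H_\QCoh(X)$ is not defined anywhere in the appendix.) Part~(1) requires work. The natural isomorphism $\xi(M) \cong \xi(M\la -1\ra[1])$ is a direct piece of bookkeeping: the substitution $(i,j) \mapsto (i-1, j+1)$ preserves $i+j$, so the rational product $\smallprodrat_{i+j=n} M^i_j$ defining $\xi(M)^n$ agrees with the analogous formula for $\xi(M\la -1\ra[1])^n$, and a sign check shows the differentials match. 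Iterating this $n$ times and then applying $\xi$ to morphisms produces the natural map
\[
\bigoplus_{n \in \Z} \Hom(M, N\la -n\ra[n]) \longrightarrow \Hom(\xi(M), \xi(N)).
\]

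To show this map is an isomorphism, I would proceed by d\'evissage. Both sides are cohomological functors of $M$ and $N$, so the subclass of pairs for which the map is an isomorphism is closed under distinguished triangles in either variable, and under shifts, Tate twists, and direct summands. Via the canonical truncations $\tau^{\le k}M \to M \to \tau^{>k}M \to$, together with surjections $\mbf{A}\otimes V\la m\ra \twoheadrightarrow F$ for each finitely generated equivariant module $F$ (with $V$ a finite-dimensional $H$-module concentrated in an appropriate grade), every object of $\Db\Coh^{H\times\Gm}(X)$ is built from objects of the form $\mbf{A}\otimes V\la m\ra[k]$. Tensor-hom adjunction against the finite-dimensional dual $V^*$, combined with Tate-twist and shift invariance of the statement on both sides, then reduces the whole verification to the single case $M = \mbf{A}$.

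For $M = \mbf{A}$, I would pick a chain-complex representative of $N$ and resolve it by a bounded-below complex $I^\bullet$ of injectives in $C^{++}\mbf{A}\lmod_{H\times\Gm}$. The preceding lemma shows that $\xi(I^\bullet)$ is $K$-injective in $\mbf{A}\ldgmod_H^+$, which lets one compute $\Hom_{D^H_\Coh}(\xi(\mbf{A}), \xi(N))$ as the $H^0$ of the $H$-invariants of the dg module $\xi(I^\bullet)$; the cohomology formula~\eqref{eqn:cohom-degrade} then identifies this with $\bigoplus_n H^n(N)^H_{-n}$. The left-hand side is also $\bigoplus_n H^n(N)^H_{-n}$ by the standard description of $\Hom$ spaces out of $\mbf{A}$ in $\Db\Coh^{H\times\Gm}(X)$, and direct inspection shows the natural map is the tautological identification. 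The subtlest step, I expect, is the d\'evissage in the preceding paragraph: one has to track how the ``rational vector'' condition $\smallprodrat$ and the ``doubly bounded below'' condition interact with $K$-injective resolutions, and verify that the infinite products appearing in $\xi(I^\bullet)$ contribute only finitely to each cohomological degree of $\Hom$, so that the reduction to $M = \mbf{A}$ is legitimate.
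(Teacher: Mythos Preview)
Your argument for part~(2) matches the paper's. For part~(1), there is a genuine gap in the d\'evissage step.

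You assert that, via truncations and surjections $\mbf{A}\otimes V\la m\ra \twoheadrightarrow F$, every object of $\Db\Coh^{H\times\Gm}(X)$ is built from the free modules $\mbf{A}\otimes V\la m\ra[k]$. This is false when $X$ is singular: the triangulated subcategory generated by such free modules is exactly the category of perfect complexes, and for singular $X$ there are coherent sheaves of infinite projective dimension, hence objects of $\Db\Coh^{H\times\Gm}(X)$ that are not perfect. Concretely, your recursion---replace $F$ by the kernel $K$ of $\mbf{A}\otimes V \twoheadrightarrow F$ and repeat---never terminates in that case. Since the main example in the paper is the nilpotent cone $\cN$, which is singular, this gap is fatal as stated.

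The paper sidesteps the issue by not reducing $M$ at all. For arbitrary $M,N$, one replaces $N$ by a bounded-below injective resolution $I$ in $C^{++}\mbf{A}\lmod_{H\times\Gm}$ (this uses the identification of $\Db\Coh^{H\times\Gm}(X)$ with the subcategory of $D^{++}\QCoh^{H\times\Gm}(X)$ having bounded coherent cohomology). The preceding lemma gives that $\xi(I)$ is $K$-injective, so $\Hom(\xi(M),\xi(N))$ is computed by the dg Hom complex from $\xi(M)$ to $\xi(I)$. On the other side, $\bigoplus_n\Hom(M,N\la -n\ra[n])$ is computed by the chain complex of graded $H$-equivariant maps from $M$ to $I$. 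These two Hom complexes are related by the same totalization that produces~\eqref{eqn:cohom-degrade}, now applied to the chain complex $R\Hom(M,N)$ rather than to $N$ itself, and the isomorphism follows directly. Your computation for $M=\mbf{A}$ is the special case of this where $R\Hom(M,N)=N$; the general case needs no d\'evissage in $M$.
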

\begin{proof}
Part~\eqref{it:graded-surj} holds by definition.  For part~\eqref{it:graded-ff}, we rely on the well-known fact that $\Db\Coh^{H \times \Gm}(X)$ is equivalent to the full subcategory of $D^{++}\QCoh^{H \times \Gm}(X)$ consisting of objects with bounded, coherent cohomology.  To find $\Hom(M, N \la -n\ra[n])$, we may replace $N$ by an injective resolution in $C^{++}\mbf{A}\lmod_{H \times \Gm}$, and $M$ by a bounded chain complex.  Then $\xi(N)$ is $K$-injective, and the chain complex $R\Hom(M,N)$ is cohomologically doubly bounded below.  The claim follows by applying~\eqref{eqn:cohom-degrade} to $R\Hom(M,N)$.
\end{proof}

\begin{lem}\label{lem:dg-pushpull}
Let $f: X \to Y$ be an $H$-equivariant closed immersion of affine $H$-varieties.
\begin{enumerate}
\item There is a functor $\bar f_*: D^H_\Coh(X) \to D^H_\Coh(Y)$ such that the following diagram commutes:\label{it:dg-push}
\[
\begin{tikzcd}
\Db\Coh^{H \times \Gm}(X) \ar[r, "\xi_X"] \ar[d, "f_*"'] &
  D^H_\Coh(X) \ar[d, "\bar f_*"] \\
\Db\Coh^{H \times \Gm}(Y) \ar[r, "\xi_Y"] &
  D^H_\Coh(Y).
\end{tikzcd}
\]
\item Assume that $\bk[X]$ admits a bounded resolution by $H \times \Gm$-equivariant free $\bk[Y]$-modules. Then there is a functor $\bar f^*: D^H_\Coh(Y) \to D^H_\Coh(X)$ such that the following diagram commutes:\label{it:dg-pull}
\[
\begin{tikzcd}
\Db\Coh^{H \times \Gm}(Y) \ar[r, "\xi_Y"] \ar[d, "f^*"'] &
  D^H_\Coh(Y) \ar[d, "\bar f^*"] \\
\Db\Coh^{H \times \Gm}(X) \ar[r, "\xi_X"] &
  D^H_\Coh(X).
\end{tikzcd}
\]
Moreover, $\bar f^*$ is left adjoint to $\bar f_*$.
\end{enumerate}
\end{lem}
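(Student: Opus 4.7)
Write $Y = \Spec(\mbf{B})$ and $X = \Spec(\mbf{B}/\mbf{I})$, where $\mbf{I} \subset \mbf{B}$ is an $H \times \Gm$-stable homogeneous ideal, and let $\phi \colon \mbf{B} \twoheadrightarrow \mbf{B}/\mbf{I}$ denote the projection.  For part~\eqref{it:dg-push}, the pushforward $\bar f_*$ is just restriction of scalars along $\phi$: viewing an $H$-equivariant dg $\mbf{B}/\mbf{I}$-module as an $H$-equivariant dg $\mbf{B}$-module is exact and evidently preserves ``cohomology bounded below'' and ``cohomology finitely generated,'' so it descends directly to a triangulated functor between derived categories without any resolution.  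The compatibility with $\xi$ is tautological: for $N^\bullet \in C^{++}(\mbf{B}/\mbf{I})\lmod_{H \times \Gm}$, the dg $\mbf{B}$-modules $\xi_Y(f_* N^\bullet)$ and $\bar f_*(\xi_X(N^\bullet))$ have the same underlying object.  Since $D^H_\Coh(X)$ is generated by the essential image of $\xi_X$ and $\bar f_* \circ \xi_X \cong \xi_Y \circ f_*$ lands in $D^H_\Coh(Y)$, the functor $\bar f_*$ preserves these subcategories.

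For part~\eqref{it:dg-pull}, let $P^\bullet \to \mbf{B}/\mbf{I}$ be the bounded $H \times \Gm$-equivariant free resolution provided by the hypothesis; each $P^i$ is a finite direct sum of internal-grading shifts of $\mbf{B}$.  The plan is to define $\bar f^*$ as the left derived functor of extension of scalars $(\mbf{B}/\mbf{I}) \otimes_{\mbf{B}} ({-})$ in the dg setting.  Concretely, for $M \in \mbf{B}\ldgmod_H^+$ one takes a $K$-flat replacement $\tilde M \to M$ (such replacements exist over a dg algebra with zero differential) and sets
\[
\bar f^*(M) := (\mbf{B}/\mbf{I}) \otimes_{\mbf{B}} \tilde M,
\]
regarded as a dg $\mbf{B}/\mbf{I}$-module.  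The bounded resolution $P^\bullet$ ensures that $\Tor^{\mbf{B}}_i(\mbf{B}/\mbf{I}, H^j(M))$ vanishes outside a bounded range of $i$, so by a spectral sequence argument $\bar f^*(M)$ has bounded-below, finitely generated cohomology whenever $M$ does.

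To compare with the classical pullback $f^*$, I would exploit the same resolution $P^\bullet$: for $N \in \Db\Coh^{H \times \Gm}(Y)$ represented by a bounded complex of flats, both $f^*(N)$ and $\bar f^*(\xi_Y(N))$ may be computed as the appropriate tensor product with $\xi(P^\bullet)$, and a direct calculation using the explicit form of $\xi$ on bounded complexes of free modules identifies the two outputs.  This shows the diagram commutes and hence that $\bar f^*$ sends the generating subcategory $\xi_Y(\Db\Coh^{H \times \Gm}(Y))$ into $D^H_\Coh(X)$.  The adjunction $(\bar f^*, \bar f_*)$ is then inherited from the dg-module-level adjunction: the isomorphism
\[
\Hom_{\mbf{B}/\mbf{I}}\bigl((\mbf{B}/\mbf{I}) \otimes_{\mbf{B}} \tilde M,\, N\bigr) \cong \Hom_{\mbf{B}}(\tilde M,\, \Res_\phi N),
\]
applied to a $K$-flat $\tilde M$ and a $K$-injective $N$, passes to the derived category.

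The main technical obstacle is bookkeeping around $K$-flat and $K$-injective replacements in the unbounded equivariant dg setting, and in particular verifying that $\xi$ genuinely intertwines the two natural ways of computing the derived tensor $(\mbf{B}/\mbf{I}) \lotimes_{\mbf{B}} ({-})$ on the graded and dg sides.  The boundedness assumption on the free resolution of $\mbf{B}/\mbf{I}$ is what makes this tractable: it avoids convergence problems, keeps everything inside the ``cohomologically bounded below, finitely generated'' subcategory, and ultimately places $\bar f^*(M)$ inside $D^H_\Coh(X)$ rather than some larger ambient derived category.
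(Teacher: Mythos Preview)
Your approach to part~\eqref{it:dg-push} matches the paper's: restriction of scalars is exact, so there is nothing to do.

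For part~\eqref{it:dg-pull}, your approach is correct in outline but genuinely different from the paper's. You construct $\bar f^*$ directly as a derived tensor product, using $K$-flat replacements on the source side and then verifying compatibility with $\xi$ and the adjunction by hand. The paper instead argues abstractly that $\bar f_*$ admits a left adjoint: it shows that the property ``$\Hom(M,\bar f_*({-}))$ is representable'' is closed under cones in the variable $M$, and then checks it on the generating objects $M = \xi_Y(M')$, where the classical adjunction map $M' \to f_*f^*M'$ produces the representing object $\xi_X(f^*M')$ directly. A five-lemma argument handles the cone step.

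What each approach buys: the paper's route avoids ever invoking $K$-flat (or $K$-projective, or semi-free) resolutions in the $H$-equivariant dg setting --- only $K$-injectives are needed, and those were established in the preceding lemma. Your route is more concrete and gives an explicit model for $\bar f^*(M)$, but it carries the burden of justifying the existence of functorial $K$-flat replacements in $\mbf{B}\ldgmod_H^+$; your parenthetical ``such replacements exist over a dg algebra with zero differential'' glosses over the equivariance, and while it can be done (semi-free resolutions built from modules $\mbf{B} \otimes_\bk V$ with $V$ a rational $H$-module), it is not entirely for free. The paper's representability argument sidesteps this by working entirely inside the triangulated subcategory generated by $\xi_Y$, where everything reduces to the classical statement.
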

\begin{proof}
Since $f_*$ is an exact functor of coherent sheaves, part~\eqref{it:dg-push} is clear.  For part~\eqref{it:dg-pull}, note that the assumption on the existence of a bounded free resolution of $\bk[X]$ implies that $f^*$ takes values in $\Db\Coh^{H \times \Gm}(X)$ rather than $D^-\Coh^{H \times \Gm}(X)$.

Let us show that $\bar f_*$ admits a left adjoint, i.e., that for $M \in D^H_\Coh(Y)$, the functor $\Hom(M,\bar f_*({-}))$ is representable.  We first claim that this property is preserved under taking cones.  That is, if $M_1 \xrightarrow{\phi} M_2 \to M_3 \to $ is a distinguished triangle in $D^H_\Coh(Y)$, and if $\Hom(M_i,\bar f_*({-}))$ is representable for $i = 1,2$, we claim that it is also representable for $i = 3$.  Let $N_1, N_2 \in D^H_\Coh(X)$ be such that we have natural isomorphisms
\[
\Hom(M_i,\bar f_*({-})) \cong \Hom(N_i, {-})
\qquad\text{for $i = 1,2$.}
\]
These isomorphisms give rise to maps $\eta_i: M_i \to \bar f_*N_i$ for $i = 1,2$.  Moreover, the natural transformation $\Hom(M_2,\bar f_*({-})) \to \Hom(M_1,\bar f_*({-}))$ induced by $\phi$ gives rise to a map $\tilde\phi: N_1 \to N_2$ making the following diagram commute:
\[
\begin{tikzcd}
M_1 \ar[r, "\phi"] \ar[d, "\eta_1"'] & M_2 \ar[d, "\eta_2"] \\
\bar f_*N_1 \ar[r, "\bar f_*\tilde\phi"] & \bar f_*N_2.
\end{tikzcd}
\]
Extend $\tilde\phi: N_1 \to N_2$ to a distinguished triangle $N_1 \xrightarrow{\tilde\phi} N_2 \to N_3 \to$, and then extend the diagram above to a morphism of distinguished triangles
\[
\begin{tikzcd}
M_1 \ar[r, "\phi"] \ar[d, "\eta_1"'] & M_2 \ar[d, "\eta_2"'] \ar[r] 
  & M_3 \ar[d, dashed, "\eta_3"] \ar[r] & {} \\
\bar f_*N_1 \ar[r, "\bar f_*\tilde\phi"] & \bar f_*N_2 \ar[r] & \bar f_*N_3 \ar[r] & .{}
\end{tikzcd}
\]
The new map $\eta_3: M_3 \to \bar f_*N_3$ gives rise to a natural transformation
\[
\Hom(N_3, {-}) \to \Hom(M_3,\bar f_*({-})),
\]
and then a five-lemma argument shows that this map is an isomorphism.  Thus, $\Hom(M_3,\bar f_*({-}))$ is representable.

We now return to the main problem of showing that $\Hom(M,\bar f_*({-}))$ is representable. Since $D^H_\Coh(Y)$ is generated by the image of $\xi_Y$, the previous paragraph tells us that it is enough to show this when $M = \xi_Y(M')$ for some $M' \in \Db\Coh^{H \times \Gm}(Y)$.  The adjunction map $M' \to f_*f^*M'$ gives rise to a map $\xi_Y(M') \to \xi_Y(f_* f^*M') \cong \bar f_* \xi_X(f^*M')$, and this map induces a natural transformation
\[
\Hom(\xi_X(f^*M'),{-}) \to \Hom(M,\bar f_*({-})) .
\]
This map is an isomorphism on objects in the image of $\xi_X$, and hence (in view of Proposition~\ref{prop:degrade}) an isomorphism in general.  We have shown that $\bar f_*$ admits a left adjoint $\bar f^*$, and that if $M = \xi_Y(M')$, then $\bar f^*M \cong \xi_X(f^*M')$.
\end{proof}

We conclude with a lemma on change of equivariance.  If $K \subset H$ is a closed subgroup, there is a forgetful functor $\Res_K^H: \Db\Coh^{H \times \Gm}(X) \to \Db\Coh^{K \times \Gm}(X)$.  If $H/K$ is a projective variety, then this functor has a right adjoint $R\Ind_K^H: \Db\Coh^{K \times \Gm}(X) \to \Db\Coh^{H \times \Gm}(X)$, defined as the composition of the equivalence
\[
\Db\Coh^{K \times \Gm}(X) \cong \Db\Coh^{H \times \Gm}(H \times^K X)
\]
with push-forward along the map $\sigma: H \times^K X \to X$ given by $\sigma(h,x) = h \cdot x$.  (The assumption that $H/K$ is projective implies that $\sigma$ is proper, so that $\sigma_*$ takes values in $\Db\Coh^{H \times \Gm}(X)$ rather than in $D^+\QCoh^{H \times \Gm}(X)$.)

\begin{lem}\label{lem:dg-indres}
Let $K \subset H$ be a closed subgroup such that $H/K$ is projective.  Let $X$ be an $H$-variety.  There are functors $R\Ind_K^H: D^K_\Coh(X) \to D^H_\Coh(X)$ and $\Res_K^H: D^H_\Coh(X) \to D^K_\Coh(X)$ such that the following diagrams commute:
\[
\begin{tikzcd}
\Db\Coh^{K \times \Gm}(X) \ar[r, "\xi_X"] \ar[d, "R\Ind_K^H"'] &
  D^K_\Coh(X) \ar[d, "R\Ind_K^H"] \\
\Db\Coh^{H \times \Gm}(X) \ar[r, "\xi_X"] &
  D^H_\Coh(X),
\end{tikzcd}
\qquad
\begin{tikzcd}
\Db\Coh^{H \times \Gm}(X) \ar[r, "\xi_X"] \ar[d, "\Res_K^H"'] &
  D^H_\Coh(X) \ar[d, "\Res_K^H"] \\
\Db\Coh^{K \times \Gm}(X) \ar[r, "\xi_X"] &
  D^K_\Coh(X).
\end{tikzcd}
\]
Moreover:
\begin{enumerate}
\item The functor $\Res_K^H$ is left adjoint to $R\Ind_K^H$.
\item Let $d = \dim H - \dim K$.  The functor $\Res_K^H({-}) \otimes \bigwedge^d (\mathrm{Lie}(H)/\mathrm{Lie}(K))^*[d]$ is right adjoint to $R\Ind_K^H$.
\end{enumerate}
\end{lem}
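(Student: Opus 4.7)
The overall strategy is to mimic Lemma~\ref{lem:dg-pushpull}: define one of the two functors directly on dg modules, and then obtain the others as its adjoints via a representability argument.

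First, $\Res_K^H$ lifts trivially: an $H$-equivariant dg $\mbf{A}$-module is a fortiori a $K$-equivariant one, so forgetting the extra equivariance gives an exact functor $\mbf{A}\ldgmod_H^+ \to \mbf{A}\ldgmod_K^+$ and thus a triangulated functor between the derived categories. Since the construction of $\xi_X(M)$ only uses the underlying graded $\mbf{A}$-module structure of $M$, we have $\xi_X \circ \Res_K^H = \Res_K^H \circ \xi_X$ on the nose. Because $D^K_\Coh(X)$ is defined as the triangulated subcategory generated by the image of $\xi_X$, this equality immediately shows that $\Res_K^H$ restricts to a functor $D^H_\Coh(X) \to D^K_\Coh(X)$.

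Next, I will construct $R\Ind_K^H$ as the right adjoint of $\Res_K^H$. Fix $N \in D^K_\Coh(X)$ and consider the contravariant functor $\Hom_{D^K_\Coh(X)}(\Res_K^H({-}), N)$ on $D^H_\Coh(X)$; the goal is to show it is representable. By the same devissage used for $\bar f^*$ in the proof of Lemma~\ref{lem:dg-pushpull}\eqref{it:dg-pull}, representability is preserved under cones (via a morphism of distinguished triangles plus the five lemma), so it suffices to handle $N = \xi_X(N')$ with $N' \in \Db\Coh^{K \times \Gm}(X)$. In that case I take $R\Ind_K^H N := \xi_X(R\Ind_K^H N')$, where the inner $R\Ind_K^H$ is the classical non-dg functor. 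For $M = \xi_X(M')$ with $M' \in \Db\Coh^{H \times \Gm}(X)$, the identification
\[
\Hom(\Res_K^H \xi_X M', \xi_X N') \cong \bigoplus_{n} \Hom(\Res_K^H M', N'\la -n\ra[n]) \cong \bigoplus_{n} \Hom(M', R\Ind_K^H N'\la -n\ra[n])
\]
follows from Proposition~\ref{prop:degrade}\eqref{it:graded-ff} together with the non-dg adjunction and the fact that $R\Ind_K^H$ commutes with Tate twists; this equals $\Hom(\xi_X M', \xi_X R\Ind_K^H N')$ by Proposition~\ref{prop:degrade}\eqref{it:graded-ff} again. This gives the adjunction on the generating set, and devissage extends it to all of $D^K_\Coh(X)$, building the representing objects into a functor as in the cited proof.

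For part~(2), the key input is that $R\Ind_K^H$ is known to admit a left adjoint on the non-dg categories given by $\Res_K^H({-}) \otimes \bigwedge^d(\mathrm{Lie}(H)/\mathrm{Lie}(K))^*[d]$: this is relative Serre duality for the projective variety $H/K$ (cf.~\cite[\S II.4.2]{jantzen}). Since tensoring with the finite-dimensional representation $\bigwedge^d(\mathrm{Lie}(H)/\mathrm{Lie}(K))^*$ is exact and plainly commutes with $\xi_X$, the same representability machinery—now applied to the contravariant functor $\Hom(R\Ind_K^H({-}), M)$ on $D^K_\Coh(X)$ for each $M \in D^H_\Coh(X)$—produces the desired right adjoint in the dg setting and verifies the adjunction first on $\xi_X$-images, then on all of $D^K_\Coh(X)$ by devissage.

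The main technical point to be careful about is that the representability-by-cones construction yields a genuine functor rather than merely an object-by-object assignment; this is the same subtlety that appears in Lemma~\ref{lem:dg-pushpull}\eqref{it:dg-pull} and is handled in the same way, by using the natural transformation built from the unit/counit on the generating collection and propagating it along cones via the five lemma.
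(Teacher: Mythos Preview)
Your argument is correct and lands in the same place as the paper's, but the construction of $R\Ind_K^H$ differs. The paper defines $R\Ind_K^H$ directly on dg modules by taking $K$-injective resolutions (whose existence was established in the preceding lemma), so that the commuting square with $\xi_X$ is immediate; it then writes down the counit $\epsilon: \Res_K^H R\Ind_K^H(M) \to M$ explicitly, obtains the comparison map $\Hom(N, R\Ind_K^H M) \to \Hom(\Res_K^H N, M)$, and checks it is an isomorphism on $\xi_X$-images (hence everywhere). You instead bypass $K$-injectives entirely and manufacture $R\Ind_K^H$ as a right adjoint via the representability-by-cones machinery of Lemma~\ref{lem:dg-pushpull}. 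Your route has the virtue of not needing the $K$-injective lemma, while the paper's route gives a concrete model for $R\Ind_K^H$ and makes the commuting diagram with $\xi_X$ transparent without any d\'evissage. For part~(2), both arguments are the same in substance: reduce to the non-dg Serre--Grothendieck duality for the smooth projective morphism $H \times^K X \to X$ (the paper cites~\cite[Theorem~III.11.1]{hartshorne} and~\cite[Example~2.16]{ab} rather than~\cite{jantzen}). One small comment: in your treatment of part~(2) the representability machinery is not actually needed, since both functors are already defined; it suffices to produce the unit or counit from the classical one via $\xi_X$ and then run the five-lemma argument directly, just as the paper does for part~(1).
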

\begin{proof}
The existence of functors $R\Ind_K^H$ and $\Res_K^H$ in the dg setting making these diagrams commute is straightforward (using $K$-injective resolutions for the former, and exactness for the latter).  In the coherent sheaf setting, we noted above that $\Res_K^H$ is left adjoint to $R\Ind_K^H$.  In the dg setting, there is at least an obvious natural transformation $\epsilon: \Res_K^H R\Ind_K^H(M) \to M$ for any $K$-injective object $M$.  This natural transformation gives rise to a natural map
\[
\Hom(N, R\Ind_K^H(M)) \to \Hom(\Res_K^H(N), M).
\]
This map is at least an isomorphism when $N$ and $M$ lie in the essential image of $\xi_X$ (by the coherent sheaf version of the adjunction), so it is in fact an isomorphism for all $N$ and $M$.

For the last assertion in the lemma, the dg version again follows from the coherent sheaf version.  The coherent version is a well-known consequence of Serre--Grothendieck duality.  Let us briefly explain how to obtain it.  Let $p: H \times^K X \to X$ be the map given by $p(h,x) = h \cdot x$.  Then $p$ is a smooth, projective bundle over $X$ with fibers isomorphic to $H/K$.  Serre--Grothendieck duality says that for $\cF \in \Db\Coh^H(H \times^K X)$ and $\cG \in \Db\Coh^H(X)$, there is a natural isomorphism
\begin{equation}\label{eqn:serre}
\Hom(\cF, p^*\cG \lotimes \omega_p[d]) \cong \Hom(p_*\cF,\cG),
\end{equation}
where $\omega_p$ is the relative canonical bundle.  (See~\cite[Theorem~III.11.1]{hartshorne} for the nonequivariant version of this statement, and see~\cite[Example~2.16]{ab} for a discussion of how to deduce the equivariant version.)  If we identify $\Coh^H(H \times^K X)$ with $\Coh^K(X)$ (cf.~\eqref{eqn:restric-equiv}), then $p_*$ and $p^*$ are identified with $R\Ind_K^H$ and $\Res_K^H$, respectively, while $\omega_p$ corresponds to the canonical bundle of $H/K$, i.e., the line bundle whose fiber over $1K \in K/H$ is identified with $\bigwedge^d (\mathrm{Lie}(H)/\mathrm{Lie}(K))^*$.  Thus,~\eqref{eqn:serre} yields the desired adjunction.
\end{proof}

\end{document}